\documentclass[11pt,reqno]{amsart}

\usepackage[margin=1.1in]{geometry}
\usepackage{microtype}                     
\usepackage{subcaption}
\usepackage{placeins}

\usepackage{amsmath,amssymb,amsfonts,amsthm} 
\usepackage{mathrsfs,euscript}             
\usepackage{mathtools}
\usepackage{stmaryrd}
\SetSymbolFont{stmry}{bold}{U}{stmry}{m}{n}
\usepackage{graphicx}                      
\usepackage[dvipsnames]{xcolor}            
\colorlet{blue}{black}                     
\usepackage{tikz}                          
\usetikzlibrary{shapes.geometric, arrows, positioning}

\usepackage{hyperref}

\theoremstyle{plain}
\newtheorem{theorem}{Theorem}[section]
\newtheorem{lemma}[theorem]{Lemma}
\newtheorem{proposition}[theorem]{Proposition}
\newtheorem{corollary}[theorem]{Corollary}

\theoremstyle{definition}

\newtheorem{remark}[theorem]{Remark}

\numberwithin{equation}{section}

\newcommand{\paren}[1]{\left(#1\right)}
\newcommand{\jump}[1]{\llbracket#1\rrbracket}

\newcommand{\p}{\partial}

\newcommand{\at}[2]{\left. #1 \right|_{#2}}
\newcommand{\grad}[1]{\nabla #1}

\newcommand{\mc}[1]{\mathcal{#1}}
\newcommand{\wh}[1]{\widehat{#1}}
\newcommand{\wt}[1]{\widetilde{#1}}
\newcommand{\bm}[1]{\boldsymbol{#1}}
\newcommand{\abs}[1]{\left\lvert #1 \right\rvert}
\newcommand{\norm}[1]{\left\lVert #1 \right\rVert}

\newcommand{\R}{\mathbb{R}}

\newcommand{\Corr}{\mathcal C}

\usepackage{siunitx}
\usepackage{booktabs}
\usepackage[ruled,vlined,linesnumbered]{algorithm2e}
\SetKwBlock{Begin}{begin}{end}
\SetKw{Continue}{continue}
\SetKw{Break}{break}
\SetKwRepeat{Do}{do}{while}
\SetKwInput{Input}{Input}
\SetKwInput{Output}{Output}
\SetKwInput{Parameters}{Parameters}

\title[Cartesian grid method for moving-domain advection-diffusion equations]{A Cartesian Grid Method for Advection-Diffusion Equations with Robin Boundary Conditions on Moving Domains}

\author{Han Zhou}
\address{Department of Mathematics, University of Pennsylvania, Philadelphia, PA 19104, USA}
\email{hzhou24@sas.upenn.edu}

\author{Yoichiro Mori}
\address{Department of Mathematics, Department of Biology, University of Pennsylvania, Philadelphia, PA 19104, USA}
\email{y1mori@sas.upenn.edu}

\author{Lingxing Yao}
\address{Department of Mathematics, University of Akron, Akron, OH 44325, USA}
\email{lyao@uakron.edu}

\keywords{Cartesian grid method, moving-domain advection--diffusion equations, Robin boundary conditions, correction functions}

\subjclass[2020]{Primary 65M06, 65M12; Secondary 65M85, 35K20}

\begin{document}

\begin{abstract}
We develop a Cartesian grid method for advection--diffusion equations with Robin boundary conditions on moving domains.
The moving-domain problem is reformulated as an interface problem on a box, with an unknown density introduced on the moving interface to enforce the Robin condition.
The bulk equation is discretized by a cell-centered finite-difference scheme on the Cartesian grid, while interface corrections are obtained from local problems in a narrow band around the interface.
The resulting method requires only modest computational geometry, avoids remeshing and cut cells, and is compatible with geometric multigrid and matrix-free GMRES.
The GMRES iteration count is essentially independent of the mesh size, and the computational cost scales linearly with the number of bulk degrees of freedom.
For the one-dimensional scheme, first-order convergence in time and second-order convergence in space are proved.
Numerical examples in one and two dimensions, including manufactured solutions and an active transport problem without an exact solution, demonstrate the accuracy and efficiency of the method.
\end{abstract}

\maketitle


\section{Introduction}
Advection--diffusion equations on moving domains arise when bulk transport is coupled to flux exchange across a moving boundary. A motivating example comes from osmotic and electrodiffusive descriptions of cell migration and cell-scale water flow. In confined geometries, water permeation can drive tumor-cell migration even when cytoskeletal and motor machinery is disrupted~\cite{Stroka2014}; related theoretical models show how osmotic pumping, ionic fluxes, and membrane water fluxes can generate motion or volume changes~\cite{AtzbergerIsaacsonPeskin2009,Mori2011,LiMoriSun2015}. The osmotic-flow problem studied by Yao and Mori~\cite{Yao2017} is particularly relevant: an osmotically active solute is transported in a deforming membrane-bounded region, and passive permeability and active pumps determine transmembrane solute fluxes. 

Computationally, such problems present three intertwined difficulties: discretizing a PDE on a complex domain whose boundary changes in time, accurately capturing a sharp moving interface with possibly complicated boundary conditions, and solving the resulting discrete systems efficiently over many time steps. Barrett et al. developed a hybrid semi-Lagrangian cut-cell method for advection--diffusion equations with Robin boundary conditions on complex, time-dependent domains~\cite{Barrett2022}; the method is conservative and directly resolves the boundary flux, but it requires cut-cell geometry and a treatment of small-cell effects. Fryklund et al. developed an integral-equation method for advection--diffusion on moving planar domains~\cite{Fryklund2023}, obtaining high accuracy through elliptic marching, special quadrature, extension procedures, and fast evaluation of modified Helmholtz potentials. Other relevant approaches include embedded-boundary methods for heat and Poisson equations on irregular domains~\cite{McCorquodaleColellaJohansen2001,SchwartzBaradColellaLigocki2006}, Robin discretizations for heat, Stefan-type, and Poisson-type problems on irregular domains~\cite{PapacGibouRatsch2010,AriasBochkovGibou2018,BochkovGibou2019,ChaiEtAl2020,ChaiEtAl2021}, and sharp-interface Cartesian methods for moving-boundary flow computations~\cite{Udaykumar2001,MarellaKrishnanLiuUdaykumar2005}.

The method developed here introduces an unknown interface density and designs correction terms at near-interface cells so that the finite-difference operator in the bulk remains the standard one. This correction-based viewpoint goes back in part to Mayo's method, which combined Cartesian-grid fast solvers with integral-equation formulations and correction terms near irregular boundaries to solve elliptic problems without body-fitted meshes~\cite{Mayo1984,Mayo1985}. Immersed interface methods and augmented immersed interface methods use jump conditions, and sometimes additional interface unknowns, to correct finite-difference stencils near singular sources, discontinuous coefficients, or irregular boundaries~\cite{LeVequeLi1994,Li2003,LiIto2006,LiItoLai2007}. Correction-function methods construct a local representation of the jump in the solution and its derivatives in a narrow band, and then use that representation to correct irregular stencils while keeping the regular-grid operator unchanged~\cite{MarquesNaveRosalesSeibold2011,WiegmannBube2000}. Kernel-free boundary-integral methods replace integral evaluation by equivalent interface problems on Cartesian grids and use local corrections with fast PDE solvers~\cite{YingHenriquez2007,zhou2024correction}. The present work extends this correction-based framework to a time-dependent moving-domain problem with Robin boundary conditions.

The numerical analysis of sharp-interface Cartesian grid methods is still much less complete than their computational use. Several works by Beale and collaborators are especially close to the present study: they prove uniform second-order accuracy for correction-based immersed interface discretizations~\cite{BealeLayton2006}, maximum-norm smoothing estimates for localized parabolic errors~\cite{Beale2009Smoothing}, and nearly second-order uniform error estimates for Navier--Stokes flow with an exact moving boundary~\cite{Beale2015NS}. The augmented-variable setting is harder, because the stability of the coupled system becomes more intricate; the finite-difference analogue of the boundary-integral method in~\cite{BealeYing2019} addresses this issue based on the discrete potential theory on Cartesian grids. Other rigorous convergence analyses include immersed-interface, piecewise-polynomial interface, parabolic mixed-boundary, and moving-interface finite element schemes~\cite{HuangLi1999,ChenStrain2008,BouchonPeichl2010Mixed,Guo2021}. These results are highly relevant, but they do not directly cover augmented-variable Cartesian grid methods in which local corrections impose Robin boundary conditions on moving domains. 

In this paper we develop and analyze a Cartesian grid method for advection--diffusion equations with Robin boundary conditions on moving domains. We rewrite the moving-domain problem as an interface problem on a larger box domain and introduce an interface unknown, which plays the role of a parabolic single-layer density. The bulk equation is discretized by a cell-centered finite-difference scheme with backward Euler time stepping, while local polynomial correction functions in a narrow space--time neighborhood of the moving boundary impose the jump information needed to enforce the Robin condition. These corrections enter only on the right-hand side at irregular grid nodes, so the bulk coefficient matrix remains the standard Cartesian finite-difference matrix and can be inverted by geometric multigrid. This is a central distinction from cut-cell and embedded-boundary finite-volume approaches: no cut-cell volumes, partial-cell quadrature, grid--interface intersection reconstruction, or small-cell stabilization is required. 

We also prove convergence for a fully discrete one-dimensional version of the scheme. The analysis proceeds by establishing a discrete maximum principle for the scheme. We demonstrate that the large consistency errors that arise from discretizing the boundary condition and from the moving domain do not affect the overall accuracy because of their localization in space and/or time. Although the analysis is restricted to one space dimension, it appears to be the first convergence proof for a correction-based Cartesian grid method for a moving-domain parabolic problem of this type.

The remainder of the paper is organized as follows. Section~\ref{sec:model} describes the moving-domain advection--diffusion problem. Section~\ref{sec:method} presents the reformulated interface problem, corrected finite-difference scheme, including the boundary-Cauchy-type problem for the correction function and the interface discretization. Section~\ref{sec:numerical-analysis} establishes the error estimates for the fully discrete scheme in one dimension. Section~\ref{sec:results} reports several numerical examples, and Section~\ref{sec:conclusion} gives concluding remarks.

\section{Model Problem}\label{sec:model}
Let $\mc B\subset \R^2$ be a fixed rectangular domain. For each $t\in[0,T]$, let $\Gamma(t)\subset\mc B$ be a smooth closed interface such that $\Gamma(t)\cap\p\mc B=\emptyset$, and let $\Omega(t)\subset\mc B$ denote the (time-dependent) region enclosed by $\Gamma(t)$. Let $\bm X(s,t):\mathbb S^1\to \Gamma(t)$ be a parametrization of the interface, where $\mathbb S^1$ denotes the periodic one-dimensional parameter domain. We consider the evolution of a chemical concentration $c(\bm x,t)$ on the moving domain $\Omega(t)$, driven by diffusion and advection by a prescribed background velocity field. We define the positive-time space--time domain, lateral boundary, and exterior space--time region by
\begin{equation}
  Q_T = \cup_{t\in(0,T]}\Omega(t)\times\{t\},\quad \Sigma_T = \cup_{t\in(0,T]}\Gamma(t)\times\{t\},\quad Q_T^c = (\mc B\times(0,T])\setminus \overline{Q_T}.
\end{equation}
Let $\bm u=(u,v)$ be the background fluid velocity. The concentration $c$ satisfies the advection--diffusion equation
\begin{equation}
  \p_t c + \grad\cdot\paren{\bm u c - D\grad c} = f, \quad \text{in } Q_T,
\end{equation}
where $D>0$ is a constant diffusion coefficient, together with the initial condition
\begin{equation}
  c(\bm x, 0) = c_0(\bm x), \quad \bm x \in \Omega(0).
\end{equation}
Let $\bm n$ denote the outward unit normal on $\p\Omega(t)=\Gamma(t)$. On $\Sigma_T$, we impose a Robin-type flux condition,
\begin{equation}
  \paren{(\bm u - \p_t\bm X) c - D\grad c} \cdot \bm n = g_\Gamma, \quad\text{ on }\Sigma_T.
\end{equation}
Here, $f$ and $g_\Gamma$ are prescribed bulk and boundary source terms defined in $Q_T$ and $\Sigma_T$, respectively.
If $\Omega(t)$ is instead taken to be the exterior of $\Gamma(t)$ (within $\mc B$), then additional boundary conditions are required on $\p\mc B$. For example, in the case of cell migration in a tube, we may impose periodicity in the horizontal direction and homogeneous Neumann conditions $\p_{\bm n}c=0$ on the top and bottom walls.

We assume that the interface motion is prescribed by a normal velocity law
\begin{equation}
  \p_t \bm X(s,t) \cdot \bm n\big(\bm X(s,t),t\big) = V_n\big(\bm X(s,t),t\big), \quad (s,t)\in \mathbb S^1\times [0,T],
\end{equation}
where $V_n(\bm x,t)$ is a given normal velocity field. In more realistic settings, the interface velocity may be coupled to the concentration field, for example,
\begin{equation}
  \p_t \bm X(s,t) = \bm V\big(\bm X(s,t),c(\bm X(s,t),t),t\big), \quad (s,t)\in \mathbb S^1\times [0,T],
\end{equation}
where $\bm V$ is a nonlinear function/operator of both $\bm X$ and $c$.

Throughout the formulation section we assume that the interface, data, and
prescribed motion are sufficiently regular on the time interval considered.
The algorithm developed below only requires the interface position and normal
velocity at each time level, so it can be used with prescribed motion or with a
separately updated coupled motion law.

\section{Numerical method}\label{sec:method}
\subsection{Extended equations}
We now reformulate the moving-domain IBVP as an interface problem on the fixed
box $\mc B$. This form keeps the bulk discretization on a Cartesian grid while
encoding the Robin boundary condition through an interface density. From this
point on, all variables are dimensionless: the diffusion coefficient is scaled to $D=1$, and the
physical flux condition in Section~\ref{sec:model} becomes
$\partial_{\bm n}c+\alpha c=g$, where
$\alpha=(\partial_t\bm X-\bm u)\cdot\bm n/D$ and
$g=-g_\Gamma/D$ after the corresponding rescaling of time, velocity, and source
terms. We first write the formulation for zero initial data.
Thus we consider the Robin IBVP
\begin{equation}\label{eqn:adv-diff-2}
\begin{aligned}
    \partial_t c + \grad\cdot(\bm u c - \grad c) &= f , \quad&&\text{ in }Q_T, \\
    \p_{\bm n}c + \alpha c  &= g, \quad&&\text{ on }\Sigma_T,\\
    c &= 0, \quad&& \text{ in }\Omega(0),
\end{aligned}
\end{equation}
where $\bm u$ and $\alpha$ are the dimensionless velocity and transfer rate.
For non-zero initial data, for example $c(\bm x,0) = c_0(\bm x)$, using the superposition principle, we first extend $c_0$ to $\mc B$ arbitrarily to obtain an extended function $\wt c_0$ and subtract the corresponding terms from $g$ and $f$ to obtain a similar equation for $d(\bm x,t) = c(\bm x, t) - \wt c_0(\bm x)$ with modified $f$ and $g$, i.e.,
\begin{equation}
\begin{aligned}
    \partial_t d + \grad\cdot(\bm u d- \grad d) &= f - \grad\cdot(\bm u \wt c_0- \grad \wt c_0), \quad&&\text{ in }Q_T, \\
    \p_{\bm n}d + \alpha d &= g - \paren{\partial_{\bm n}\wt c_0 + \alpha \wt c_0}, \quad&&\text{ on }\Sigma_T,\\
    d &= 0, \quad&& \text{ in }\Omega(0).
\end{aligned}
\end{equation}

For a piecewise smooth quantity $w$ on $\mc B$, we use the one-sided trace
notation
\begin{equation}
  \at{w}{\Gamma_i}(\bm x,t)
  = \lim_{\epsilon\downarrow0} w(\bm x-\epsilon\bm n,t),
  \qquad
  \at{w}{\Gamma_e}(\bm x,t)
  = \lim_{\epsilon\downarrow0} w(\bm x+\epsilon\bm n,t),
  \qquad \bm x\in\Gamma(t).
\end{equation}
Thus $\at{w}{\Gamma_i}$ is the
interior trace from the physical domain $\Omega(t)$ and $\at{w}{\Gamma_e}$ is
the exterior trace from $\mc B\setminus\overline{\Omega(t)}$. The jump and
average are $\jump{w}=\at{w}{\Gamma_i}-\at{w}{\Gamma_e}$ and
$\{w\}=(\at{w}{\Gamma_i}+\at{w}{\Gamma_e})/2$.
We extend $f$ to $\mc B\times(0,T]$ by setting $f=0$ in $Q_T^c$ and seek a
function on the full box satisfying
\begin{equation}\label{eqn:adv-diff-inf}
\begin{aligned}
    \partial_t c + \grad\cdot(\bm u c - \grad c) &=  f, \quad&&\text{ in }\mc B\times (0,T], \\
    \jump{c} &= 0, \quad&&\text{ on }\Sigma_T,\\
    \jump{\partial_{\bm n} c} &= \psi, \quad&&\text{ on }\Sigma_T,\\
    c &= 0, \quad&& \text{ in }\mc B \times\{t=0\}, \\
    c &= 0, \quad && \text{ on }\partial\mc B\times[0,T].
\end{aligned}
\end{equation}
Here $\psi$ is an additional unknown density on $\Sigma_T$. It is determined by the equation
\begin{equation}\label{eqn:adv-diff-bie}
     \frac{1}{2}\psi +\{\p_{\bm n}c + \alpha c\} = g, \quad\text{ on }\Sigma_T.
\end{equation}
For compatibility, one needs to set $\psi = 0$ for $t=0$.
The equivalence with the Robin IBVP is immediate from the interface conditions. On $\Sigma_T$, since $\jump{c}=0$ and $\jump{\partial_{\bm n}c}=\at{\partial_{\bm n}c}{\Gamma_i}
-\at{\partial_{\bm n}c}{\Gamma_e}=\psi$, equation~\eqref{eqn:adv-diff-bie} gives
\begin{equation}
\frac12\paren{\at{\partial_{\bm n}c}{\Gamma_i}
-\at{\partial_{\bm n}c}{\Gamma_e}}
+\frac12\paren{\at{\partial_{\bm n}c}{\Gamma_i}
+\at{\partial_{\bm n}c}{\Gamma_e}}
+\alpha\at{c}{\Gamma_i}
=\at{\partial_{\bm n}c}{\Gamma_i}+\alpha\at{c}{\Gamma_i}=g.
\end{equation}
Thus the solution to the interface problem~\eqref{eqn:adv-diff-inf} satisfies
the dimensionless Robin IBVP~\eqref{eqn:adv-diff-2} in $Q_T$.
The homogeneous Dirichlet condition on $\partial\mc B$ is only a convenient
choice for the presentation. Other standard outer boundary conditions, such as
periodic or no-flux conditions, can be handled in the same Cartesian
grid framework because the box boundary is grid-aligned.
The formulation is closely related to a single-layer boundary integral formulation. In that view, $\psi$ is the unknown density, and equation~\eqref{eqn:adv-diff-bie} is the boundary integral equation for this density; see \cite{Costabel1990}.

\subsection{Corrected finite difference scheme}
We first describe the numerical discretization of the interface problem~\eqref{eqn:adv-diff-inf} for a given $\psi$.
For convenience, we assume the rectangular domain is a square $\mc B = (0,L)^2$.
For a positive integer $N$, we partition $\mc B$ by grid lines
$x_i=i h$ and $y_j=j h$, $0\le i,j\le N$, with $h=L/N$. The unknowns are stored
at cell centers
\begin{equation}
\mc T_h=\{\bm x_{i+\frac12,j+\frac12}
=(x_{i+\frac12},y_{j+\frac12}) : 0\le i,j\le N-1\}.
\end{equation}
Thus $c_{i+\frac12,j+\frac12}^n$ denotes the numerical solution at a cell
center at time $t^n$. We also discretize the interface $\Gamma(t)$ into points
$\bm X_k^n=\bm X(s_k,t^n)$, $s_k=k\Delta s$, $k=0,1,\ldots,N_\Gamma-1$, with
$\Delta s=2\pi/N_\Gamma$. We assume that this interface mesh is quasi-uniform with
neighboring point distances of order $O(h)$, after reparameterization if
needed. Let $\psi_k^n$ approximate $\psi(\bm X_k^n,t^n)$, and let $\psi^n$ be
the periodic cubic-spline interpolant determined by these values.

At a fixed time $t$, we label each cell center by its position relative to the interface. A point $\bm x\in\mc T_h$ is an interior node if $\bm x\in\overline{\Omega(t)}$, and it is an exterior node otherwise.
Let $\mathcal{S}(\bm x)\subset\mc T_h$ be the finite difference stencil at $\bm x$. We call $\bm x$ a regular node if the whole stencil lies on one side of $\Gamma(t)$. We call it an irregular node if the stencil crosses $\Gamma(t)$, that is, if it contains grid points from both $\Omega(t)$ and $\mc B\setminus\overline{\Omega(t)}$.

At a cell center $(x_{i+\frac12},y_{j+\frac12})$ away from the interface at time $t^n$, we use a standard cell-centered finite difference scheme in space and backward Euler in time:
\begin{equation}\label{eqn:fds-reg}
\mc D_t^- c_{i+\frac12,j+\frac12}^{n}
+ \mc D_x^- \left(
u_{i+1,j+\frac12}^{\,n}
\, \mc A_x^+ c_{i+\frac12,j+\frac12}^{\,n}
\right)
+ \mc D_y^- \left(
v_{i+\frac12,j+1}^{\,n}
\, \mc A_y^+ c_{i+\frac12,j+\frac12}^{\,n}
\right)
-
\Delta_h c_{i+\frac12,j+\frac12}^{\,n} = f_{i+\frac12,j+\frac12}^{\,n},
\end{equation}
where $f_{i+\frac12,j+\frac12}^{\,n}=f(x_{i+\frac12},y_{j+\frac12},t^n)$ and, for any grid function $w$, the difference operators are defined as
\begin{align}
    \mc D_t^- w_{a,b}^{n} &= \frac{w_{a,b}^{n} - w_{a,b}^{n-1}}{\tau},\\
    \mc D_x^\pm w_{a,b} &= \pm\frac{w_{a\pm 1,b} - w_{a,b}}{h}, \quad \mc D_y^\pm w_{a,b} = \pm\frac{w_{a,b\pm 1} - w_{a,b}}{h},\\
    \Delta_h w_{a,b} &= \mc D_x^+\mc D_x^- w_{a,b} +\mc D_y^+ \mc D_y^- w_{a,b}
\end{align}
and the average operators $\mc A_x,\mc A_y$ are defined as
\begin{equation}
\mc A_x^+ w_{a, b}
= \frac12\left( w_{a+1,b} + w_{a, b} \right),
\qquad
\mc A_y^+ w_{a, b}
= \frac12\left( w_{a, b+1} + w_{a, b} \right),
\end{equation}
where $a, b$ are integers or half-integers.
For the differential operator $\mc L$ with $\mc Lc = \p_t c + \grad\cdot\paren{\bm u c - \grad c}$, we abbreviate equation~\eqref{eqn:fds-reg} as
\begin{equation}
    \mc L_{\tau, h}^n c_{i+\frac12,j+\frac12}^{n} = f_{i+\frac12,j+\frac12}^{\,n}.
\end{equation}

Near the interface, a finite difference stencil may cross a jump in the extended solution. Without a correction, this produces a large local truncation error and degrades the global error. We correct the right-hand side of the discretized equation by introducing a correction function in a narrow band around the interface and adding the corresponding correction terms to~\eqref{eqn:fds-reg}.

Let $\Omega_\Gamma$ be a narrow band around the interface $\Gamma$ and denote by $\Omega_{\Sigma,T} = \cup_{t\in[0,T]}\Omega_{\Gamma(t)}\times\{t\}$.
The width of the narrow band is chosen to be sufficiently large so that all cell centers where the correction function is needed are covered, and it typically depends on the background grid and the finite difference scheme.
Let $c^+$ and $c^-$ be smooth extensions of $\at{c}{Q_T}$ and $\at{c}{Q_T^c}$ into $\Omega_{\Sigma,T}$.
We denote the exact correction function by $\Corr = c^+-c^-$. Let $\Corr_h$ be a numerical construction of the correction function, for which the numerical method will be elaborated in the next subsection. The function $\Corr$ depends on both $\psi$ and the jump of $f$.

Let $\chi_T(\bm x,t)$ be a characteristic function defined as
\begin{equation}
    \chi_T(\bm x,t) =
    \begin{cases}
        1, & (\bm x,t)\in Q_T,\\
        0, & (\bm x,t)\in Q_T^c.
    \end{cases}
\end{equation}
In the fully discrete update we write $\chi^n(\bm x)=\chi_T(\bm x,t^n)$, the characteristic function of $\Omega(t^n)$.
With the correction function and the characteristic function, we represent the solution in different domains with different expressions,
\begin{equation}\label{eqn:c-rep}
    c (\bm x,t)=
    \begin{cases}
        c^+(\bm x,t) - (1-\chi_T(\bm x,t) )\Corr(\bm x,t), &  (\bm x,t)\in Q_T\cap \Omega_{\Sigma,T}, \\
        c^-(\bm x,t) + \chi_T(\bm x,t) \Corr(\bm x,t), &  (\bm x,t)\in Q_T^c\cap \Omega_{\Sigma,T}.
    \end{cases}
\end{equation}
It is easier to approximate $\mc L c^\pm$ than $\mc L c$ directly, because $c^\pm$ are smooth across the local stencil and do not need an interface correction. Applying the finite difference operator $\mc L_{\tau, h}^n$ gives
\begin{equation}
    \mc L_{\tau, h}^n c (\bm x,t)=
    \begin{cases}
        f(\bm x,t) - \mc L_{\tau, h}^n[(1-\chi_T)\Corr](\bm x,t)  +\mc O(\tau + h^2) ,&  (\bm x,t)\in Q_T\cap \Omega_{\Sigma,T}, \\
        f(\bm x,t) + \mc L_{\tau, h}^n[ \chi_T\Corr](\bm x,t)+\mc O(\tau + h^2),& (\bm x,t)\in Q_T^c\cap \Omega_{\Sigma,T}.
    \end{cases}
\end{equation}
This motivates our corrected scheme
\begin{equation}\label{eqn:corrected-scheme}
    \mc L_{\tau, h}^n c_{i+\frac12,j+\frac12}^{n} = f_{i+\frac12,j+\frac12}^{n} + Q_{i+\frac12,j+\frac12}^{n},
\end{equation}
where the correction term $Q_{i+\frac12,j+\frac12}^{n}$ is the commutator of
the discrete operator with multiplication by the characteristic function
$\chi$:
\begin{equation}
  Q_{i+\frac12,j+\frac12}^{n}
  =
  \paren{[\mc L_{\tau, h}^n,\chi^n]\Corr^{n}_h}_{i+\frac12,j+\frac12}
  =
  \paren{\mc L_{\tau, h}^n(\chi^n \Corr^{n}_h)-\chi^n\,\mc L_{\tau, h}^n\Corr^{n}_h}_{i+\frac12,j+\frac12}.
\end{equation}
The correction function $\Corr_h$ is only evaluated near the interface. Thus the correction term $Q$ is nonzero only at irregular grid nodes.
Because the interface treatment changes only the right-hand side, the coefficient matrix remains the same. This is useful for fast solvers.

The correction function also gives a direct way to discretize the boundary condition~\eqref{eqn:adv-diff-bie}. We use Lagrange interpolation to approximate $c^\pm$ and $\partial_{\bm n}c^\pm$ on $\Gamma(t)$ from nearby grid values. Then we impose~\eqref{eqn:adv-diff-bie} at the interface points $\bm X_k^n$:
\begin{equation}\label{eqn:dis-bie}
  \frac{1}{2}\psi_k^n + \bm n\cdot\{\grad c_h^n\}_h + \alpha\{c_h^n\}_h = g_k^n,\quad k = 0,1,\cdots,N_\Gamma-1,
\end{equation}
where $\{c_h^n\}_h$ and $\{\grad c_h^n\}_h$ are the approximations of $\{c\}$ and $\{\grad c\}$ at $\bm X_k^n$, respectively.
In particular, using~\eqref{eqn:c-rep}, we compute $\{c_h^n\}_h$ as follows
\begin{equation}
  \{c_h^n\}_h(\bm x) = \frac{1}{2}\Pi_h\paren{c_h^+ +c_h^-}(\bm x, t^n) = \Pi_h\paren{c_h + \paren{\frac12-\chi_T}\Corr_h}(\bm x, t^n), \quad (\bm x,t^n)\in \Sigma_T,
\end{equation}
where $\Pi_h$ denotes the Lagrange interpolation operator.
Since $c_h + (\frac12-\chi_T)\Corr_h$ is a continuous function on $\Sigma_T$, the evaluation of $\{c_h^n\}$ at $\bm X_k^n$ can be done by Lagrange interpolation using values of $c_h$ and $\Corr_h$ at Cartesian grid nodes. For each target point $\bm X_k^n$, we first choose the closest cell center and form the surrounding $3\times3$ block of candidate cell centers. From these candidates we select the six nearest centers for which the Vandermonde matrix associated with the basis $\{1,x,y,x^2,y^2,xy\}$ has full rank. The resulting six values define a local quadratic Lagrange interpolant used to approximate $c_h^n+(\frac12-\chi^n)\Corr^n_h$ at $\bm X_k^n$; see Figure \ref{fig:stencil} for an illustration.

\begin{figure}[htbp]
  \centering
    \includegraphics[width=0.45\textwidth]{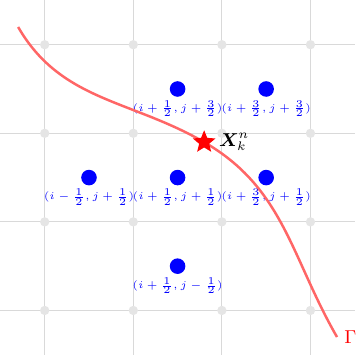}
  \caption{Illustration of the six cell centers used for local quadratic interpolation near an interface point $\bm X_k^n$. The blue dots indicate the selected admissible six-point stencil from the surrounding cell-centered candidates.}
  \label{fig:stencil}
\end{figure}

\subsection{Local Cauchy problem}
To compute the correction function, we need local extensions of $c^+$ and $c^-$ in the narrow band $\Omega_{\Sigma,T}$. We choose the extensions so that they satisfy the same differential equation in the band, with extended source terms. Let $f^\pm$ be smooth extensions of $\at{f}{Q_T}$ and $\at{f}{Q_T^c}$ into $\Omega_{\Sigma,T}$.
Since we take $\at{f}{Q_T^c}=0$, we have $f^-=0$ in the band. For $f^+$, we extend $f$ from the interface by a Taylor expansion. The expansion order is tied to the desired order of the scheme; for the second-order spatial scheme used here, a locally constant extension of $f^+$ is enough.
Hence, $c^\pm$ satisfy
\begin{equation}
\begin{aligned}
\partial_t c^\pm + \grad\cdot\paren{\bm u c^\pm - \grad c^\pm} &= f^\pm, \quad&&\text{ in }\Omega_{\Sigma,T},
\end{aligned}
\end{equation}
together with continuous function values and normal derivatives, $\jump{c^\pm}=0$ and $\jump{\partial_{\bm n}c^\pm}=0$ on $\Sigma_T$.

The exact correction function then satisfies the following equations in the moving narrow band $\Omega_{\Sigma,T}$:
\begin{equation}\label{eqn:cauchy-C}
\begin{aligned}
\partial_t \Corr + \grad\cdot\paren{\bm u\Corr - \grad \Corr} &= F, \quad&&\text{ in }\Omega_{\Sigma,T}, \\
\Corr &= 0, \quad&&\text{ on }\Sigma_T,\\
\partial_{\bm n} \Corr &= \psi, \quad&&\text{ on }\Sigma_T,\\
\Corr &= 0, \quad && \text{ in }\Omega\times \{t=0\},
\end{aligned}
\end{equation}
where $F = f^+ - f^-$. The corresponding boundary Cauchy problem is ill-posed in the sense of Hadamard~\cite{Hadamard1923}. Our goal is therefore not to design a stable solver for the Cauchy problem in the usual sense. Instead, we use these conditions only to locally construct a polynomial that approximates the correction function within the one- to two-grid-cell band where corrections are needed.
At each time step $t^n$, we express the numerical solution $\Corr_h^n$ in the narrow band, around an interface point $\bm p_k=\bm X_k^n$, as
\begin{equation}
    \Corr_h^{n}(\bm x) = \sum_{m=1}^6 a_{k,m}^{n} \phi_m\paren{\frac{\bm x - \bm p_k}{h}},
\end{equation}
where $a_{k,m}^{n}$ are unknown coefficients to be determined and $\phi_m(\bm x)$ are polynomial basis functions satisfying
\begin{equation}
  \phi_m \in \text{span}\left\{ 1, x, y, x^2, y^2, xy \right\}.
\end{equation}
We build the correction function in the narrow band by patching these local polynomial solutions. The correction function is only used to balance the truncation error near irregular stencils, so the patched correction function need not be globally smooth across patch boundaries. We therefore use the simplest patching rule: $\Corr_h^n(\bm x)$ is taken from the local polynomial centered at the closest interface point. Thus the numerical correction is represented by the coefficients $a_{k,m}^n$ at each interface point.

For the local solution around $\bm p_k$, we use a neighborhood $B(\bm p_k)$ with radius of order $h$. We approximate the time derivative by a finite difference and determine the spatial polynomial by collocation. More precisely, we choose three sets of collocation points, $\mathcal{X}_{D,k}^n,\mathcal{X}_{N,k}^n\subset \Gamma(t^n)\cap B(\bm p_k)$ and $\mathcal{X}_{P,k}^n\subset\Omega_{\Gamma(t^n)}\cap B(\bm p_k)$, and impose the PDE and boundary conditions for $\Corr_h^n$ at those points:
\begin{equation}\label{eqn:cauchy-C-h}
\begin{aligned}
  \frac{\Corr_h^n(\bm x)}{\tau} + \grad\cdot\paren{\bm u\Corr_h^n - \grad \Corr_h^n}(\bm x) &= F(\bm x,t^n) + \frac{\Corr_h^{n-1}(\bm x)}{\tau}, \quad&&\text{ for }\bm x\in \mathcal{X}_{P,k}^n,\\
\Corr_h^n(\bm x) &= 0, \quad&&\text{ for }\bm x\in \mathcal{X}_{D,k}^n,\\
\partial_{\bm n} \Corr_h^n(\bm x) &= \psi^n(\bm x), \quad&&\text{ for }\bm x\in \mathcal{X}_{N,k}^n,
\end{aligned}
\end{equation}
together with the initial condition $\Corr_h^0\equiv 0$.
The collocation problem~\eqref{eqn:cauchy-C-h} results in an $N_c$-by-$6$ linear system for the coefficients $a_{k,m}^n$, with $N_c$ being the number of collocation points.
Thus $a_{k,m}^n$ depends linearly on $\psi^n$, $a_{k,m}^{n-1}$, and $F$.
If $N_c>6$, the system is overdetermined and one has to solve it in the least squares sense.
With the collocation points chosen below, the system is square; see~\cite{zhou2024correction}.
Here, we choose $6$ collocation points as follows:
\begin{equation}
\begin{aligned}
  \mathcal{X}_{D,k}^n &= \{ \bm X(s_k+ j\eta\Delta s,t^n), j = -1,0,1\}, \\
  \mathcal{X}_{N,k}^n &= \{ \bm X(s_k+ j\eta\Delta s,t^n), j = -1,1\}, \\
  \mathcal{X}_{P,k}^n &= \{\bm X(s_k,t^n)\},
\end{aligned}
\end{equation}
where $\eta$ is a parameter that controls the distance between collocation points. The PDE collocation set $\mathcal{X}_{P,k}^n$ contains the central interface point; the residual there is evaluated as the trace of the local polynomial residual from the narrow band. We use $\eta=0.5$ in the computations. In our tests, the accuracy is not very sensitive to this choice.
Each local problem is a $6\times6$ linear system and can be solved by a standard dense solver, such as QR factorization. The local systems are independent, so the total cost of building the correction is proportional to the number of interface points.

\subsection{Numerical implementation}

The implementation requires only elementary computational geometry at each time
level. First, the interface markers are looped over to identify Cartesian
cell centers in a narrow band around $\Gamma(t^n)$, assign their side
information, and record the nearest interface marker. The side labels can then
be propagated from this narrow band to the remaining cell centers by a
grid-based breadth-first search, or flood-fill/rasterization step. Meanwhile,
for each interface marker we record nearby Cartesian cell centers used for
interpolation and local collocation. The method does not require cut-cell
volumes, grid--interface intersection points, or numerical quadrature over
partial cells, which keeps the implementation simple.

We now summarize the linear algebra used in the scheme.
Let $\bm c^n$, $\bm \psi^n$, and $\bm a^n$ denote the collections of concentrations at bulk grid points, single-layer densities, and correction function coefficients at the $n$-th time step, respectively.
The collocation equations~\eqref{eqn:cauchy-C-h}, corrected finite difference equation~\eqref{eqn:corrected-scheme}, and discrete boundary condition~\eqref{eqn:dis-bie} give one coupled linear system for $\bm a^n$, $\bm c^n$, and $\bm \psi^n$:
\begin{equation}\label{eqn:unified-system}
\begin{bmatrix}
A^n & 0 & B^n \\
E^n & L^n & 0 \\
M^n & K^n & \frac{1}{2}I
\end{bmatrix}
\begin{bmatrix}
\bm a^n \\
\bm c^n \\
\bm \psi^n
\end{bmatrix}
=
\begin{bmatrix}
\bm b_1^n \\
\bm b_2^n \\
\bm g^n
\end{bmatrix},
\end{equation}
where $A^n, B^n, E^n, L^n, M^n$, and $K^n$ are the matrices associated with the respective discrete operators and $\bm b_1^n,\bm b_2^n$ are vectors that depend linearly on the known values $\bm a^{n-1}, \bm c^{n-1}, \bm \psi^{n-1}$ and $f$.
Let $N_h=\mc{O}(N^2)$ be the number of Cartesian cell-center unknowns and let
$N_\Gamma$ be the number of interface points. With a
six-coefficient local polynomial at each interface point, set
$N_a=6N_\Gamma$. Then
\begin{equation}
\bm a^n\in\R^{N_a},\qquad
\bm c^n\in\R^{N_h},\qquad
\bm\psi^n\in\R^{N_\Gamma}.
\end{equation}
The three block rows of~\eqref{eqn:unified-system} have sizes
$N_a$, $N_h$, and $N_\Gamma$, respectively, and the three block columns
have the same sizes. Equivalently,
\begin{equation}
\begin{aligned}
A^n&\in\R^{N_a\times N_a},&
B^n&\in\R^{N_a\times N_\Gamma},\\
E^n&\in\R^{N_h\times N_a},&
L^n&\in\R^{N_h\times N_h},\\
M^n&\in\R^{N_\Gamma\times N_a},&
K^n&\in\R^{N_\Gamma\times N_h}.
\end{aligned}
\end{equation}
Here, $L^n$ is the matrix associated with the regular-grid operator
$\mc L_{\tau,h}^n$, and $A^n$ is block diagonal, consisting of
$N_\Gamma$ blocks of size $6\times6$. We solve linear systems involving
$L^n$ using geometric multigrid and invert $A^n$ block by block.
Eliminating $\bm c^n$ and $\bm a^n$ gives a system for the interface density $\bm \psi^n$:
\begin{equation}\label{eqn:linear-sys-psi}
  \paren{ \frac{1}{2} I + G^n (A^n)^{-1} B^n } \bm \psi^n = \bm g^n - K^n (L^n)^{-1}\bm b_2^n + G^n (A^n)^{-1}\bm b_1^n,\quad G^n = K^n (L^n)^{-1}E^n - M^n.
\end{equation}
We solve~\eqref{eqn:linear-sys-psi} by the matrix-free GMRES. Each matrix-vector product requires one multigrid solve for $L^n$ and independent inversions of the small blocks of $A^n$. The overall cost is linear in $N_h$ when the GMRES iteration count stays essentially independent of $N_h$; the numerical results below show this behavior.

\section{Numerical analysis}\label{sec:numerical-analysis}
This section proves stability and convergence for a one-dimensional version of
the corrected finite difference scheme. The model problem is an interval with
one prescribed moving endpoint, a constant scalar velocity field, and Robin data at
the moving boundary.
For simplicity, the one-dimensional analysis uses the solution values at grid nodes as unknowns, whereas the two-dimensional implementation uses cell-centered values. This affects only the treatment of the box boundary and does not alter the moving-boundary correction mechanism.

\subsection{Continuous setting}
Let $\mc B=(0,L)$ be the fixed computational domain. We assume $\gamma(t)\in\mc B$ for all $t\in[0,T]$. At time
$t$, the physical interval is $\Omega(t)=(0,\gamma(t))$, and the moving
boundary is $\Gamma(t)=\{\gamma(t)\}$. We write $\dot\gamma$ for the boundary velocity. 
Let $v$ be a constant scalar velocity, and define $\alpha(t) = \dot{\gamma}(t)-v \ge \alpha_0$.
We consider the following problem for the solution $c$:
\begin{equation}\label{eqn:prob-1d}
\begin{aligned}
    \p_t c + v\p_x c &= \p_{xx} c + f,\quad && \text{ in } Q_T, \\
    \p_x c + \alpha c &= g, \quad &&\text{ on }\Sigma_T,\\
    c &= c_0, \quad && \text{ in }\Omega(0)\times\{0\}, \\
    c &= q, \quad && \text{ on }\{0\}\times (0,T].
\end{aligned}
\end{equation}
We assume that $\gamma, c_0, f, g,$ and $q$ are sufficiently regular so that the solution $c \in C^{4,2}(\overline{Q_T})$.
Here, $C^{k,l}(U)$ denotes the space of functions $w$ whose derivatives
$\partial_t^j\partial_x^i w$ are continuous in $U$ for $0\le i\le k$ and
$0\le j\le l$.

\begin{theorem}\label{thm:max_principle}
    Assume $\alpha_0 > 0$.
Then the following statements hold:
\begin{enumerate}
    \item If $f\le 0$ in $Q_T$, $g\le 0$ on $\Sigma_T$, $q\le 0$ on $\{0\}\times(0,T]$, and $c_0\le 0$ in $\Omega(0)$, then $c\le 0$ in $\overline{Q_T}$.
Similarly, if $f\ge 0$, $g\ge 0$, $q \ge 0$, and $c_0\ge 0$, then $c\ge 0$.
\item
If $c$ solves~\eqref{eqn:prob-1d}, then
\begin{equation}
\|c\|_{L^\infty(Q_T)}
\le
\|c_0\|_{L^\infty(\Omega(0))}
+
\sup_{t\in(0,T]}\abs{q(t)}
+
\alpha_0^{-1}\|g\|_{L^\infty(\Sigma_T)}
+
T\|f\|_{L^\infty(Q_T)}.
\end{equation}
\end{enumerate}
\end{theorem}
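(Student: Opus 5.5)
The plan is to prove part (1) by a classical perturbation-and-contradiction argument that works directly on the closed space--time region $\overline{Q_T}$. Part (2) will then follow from part (1) by comparing $c$ with an explicit barrier that depends only on $t$.

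\textbf{Part (1).} Take the case $f\le0$, $g\le0$, $q\le0$, $c_0\le0$; the other case follows by replacing $c$ with $-c$. For $\varepsilon>0$ set $w=c-\varepsilon(1+t)$. Then $\p_t w+v\p_x w-\p_{xx}w=f-\varepsilon<0$ in $Q_T$. Since $c\in C^{4,2}(\overline{Q_T})$ and $\overline{Q_T}$ is compact, $w$ attains its maximum at some point $(x^*,t^*)$. We argue by contradiction that this maximum is negative, and go through the possible locations of the maximizer.
\begin{itemize}
\item If $t^*=0$, then $w\le c_0-\varepsilon<0$.
\item If $x^*=0$, then $w=q-\varepsilon(1+t^*)<0$.
\item If $0<x^*<\gamma(t^*)$ and $t^*\in(0,T]$, the maximizer is either an interior point or on the top $t=T$. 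The map $x\mapsto w(x,t^*)$ has an interior maximum, so $\p_x w=0$ and $\p_{xx}w\le0$. Since $\gamma$ is continuous, the points $(x^*,t)$ with $t<t^*$ close to $t^*$ lie in $Q_T$, so the one-sided derivative gives $\p_t w\ge0$. Then the left side of the differential inequality is $\ge0$, a contradiction.
\item If $x^*=\gamma(t^*)$ with $t^*>0$, restrict to the time slice $t=t^*$. The function $x\mapsto w(x,t^*)$ on $[0,\gamma(t^*)]$ is maximized at its right endpoint, so $\p_x c=\p_x w\ge0$ there. If the maximum were positive, then $c=w+\varepsilon(1+t^*)>0$, and since $\alpha\ge\alpha_0>0$ we would get $g=\p_x c+\alpha c>0$. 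This contradicts $g\le0$.
\end{itemize}
Hence $w\le0$ on $\overline{Q_T}$, and letting $\varepsilon\to0$ gives $c\le0$.

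The step needing the most care is this moving-boundary case. The argument only uses spatial information on the slice $t=t^*$, so the boundary velocity $\dot\gamma$ plays no role there; its only role is through $\alpha\ge\alpha_0>0$. In the interior and top cases, the time derivative sign uses that the domain contains a backward time segment through $(x^*,t^*)$. This holds because $\gamma$ is continuous and $x^*<\gamma(t^*)$.

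\textbf{Part (2).} Let $K=\|c_0\|_{L^\infty(\Omega(0))}+\sup_{t\in(0,T]}|q(t)|+\alpha_0^{-1}\|g\|_{L^\infty(\Sigma_T)}$ and define the barrier $\phi(t)=K+t\|f\|_{L^\infty(Q_T)}$. Set $w=c-\phi$; it solves \eqref{eqn:prob-1d} with the following data.
\begin{itemize}
\item Source: $f-\|f\|_\infty\le0$.
\item Boundary value at $x=0$: $q-\phi\le0$.
\item Initial value: $c_0-K\le0$.
\item Robin data: $g-\alpha\phi$. Since $\alpha\ge\alpha_0$ and $\phi\ge K\ge\alpha_0^{-1}\|g\|_\infty$, we have $\alpha\phi\ge\|g\|_\infty$, so $g-\alpha\phi\le0$.
\end{itemize}
Part (1) then gives $c\le\phi\le K+T\|f\|_\infty$. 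Applying the same argument to $-c$ gives the lower bound, which yields the stated $L^\infty$ estimate.
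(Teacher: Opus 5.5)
Your proof is correct and follows the paper's route: part (1) uses the same perturbation $w=c-\varepsilon(1+t)$, and part (2) uses the same barrier $\phi(t)$, which is the paper's comparison function $\overline c$. The only difference is that you carry out explicitly the case analysis (initial slice, fixed boundary, interior and top points, and the moving boundary, where $\alpha\ge\alpha_0>0$ rules out a positive maximum) that the paper simply cites as the standard parabolic maximum principle with Robin conditions.
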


\begin{proof}
For any $\varepsilon>0$, set $w=c-\varepsilon(t+1)$. Then $w_t+v w_x-w_{xx}=f-\varepsilon<0$ after the usual perturbation argument.
The standard maximum principle for parabolic equations with Robin boundary conditions applies here since $\alpha \ge \alpha_0 > 0$.
Set
\begin{equation}
\begin{aligned}
\mathcal{F}&:=\|f\|_{L^\infty(Q_T)},&
\mathcal{G}&:= \alpha_0^{-1}\|g\|_{L^\infty(\Sigma_T)},\\
\mathcal{C}_0&:=\|c_0\|_{L^\infty(\Omega(0))},&
\mathcal{Q}&:= \sup_{t\in (0,T]}\abs{q(t)}.
\end{aligned}
\end{equation}
Define the comparison function $\overline c(x,t):= \mathcal{C}_0 + \mathcal{G} + \mathcal{Q} + \mathcal{F}t$.
In the interior, $\overline c_t + v \overline c_x - \overline c_{xx} = \mathcal{F} \ge f$.
On the boundary $\Sigma_T$, $\partial_x \overline c + \alpha \overline c = \alpha(\mathcal{C}_0+\mathcal{G}+\mathcal{Q}+\mathcal{F}t) \ge \alpha \mathcal{G} \ge g$, since $\alpha \ge \alpha_0$.
On the boundary $\{0\}\times(0,T]$, $\overline{c}(0,t) \ge \mathcal{Q} \ge q$.
At $t=0$, $\overline c(\cdot,0) = \mathcal{C}_0 + \mathcal{G} + \mathcal{Q} \ge c_0$.
Thus $w:=c-\overline c$ satisfies the hypotheses of the maximum principle, and therefore $c\le \overline c$ in $Q_T$. Applying the same reasoning to $-c$ yields the stated $L^\infty$ estimate.
\end{proof}

If $\alpha_0 < 0$, we choose $\beta$ sufficiently large such that $\beta >\max\{\abs{\alpha_0} + 1, \abs{v}\}$.
With the transform $c=e^{\beta x+(\beta^2-v\beta)t}w$, the problem for $c$
becomes
\begin{equation}\label{eqn:prob-w}
\begin{aligned}
    \p_t w  - \p_{xx} w + (v - 2\beta) \p_x w &= e^{-\beta x - (\beta^2 - v\beta)t} f, && \text{ in } Q_T, \\
    \p_x w + (\alpha + \beta) w &= e^{-\beta x - (\beta^2 - v\beta)t} g, && \text{ on } \Sigma_T, \\
    w &= e^{-\beta x} c_0, && \text{ in } \Omega(0) \times \{t=0\}, \\
    w &= e^{-(\beta^2 - v\beta)t} q, && \text{ on } \{0\} \times (0,T].
\end{aligned}
\end{equation}
The boundary coefficient in~\eqref{eqn:prob-w} satisfies
$\alpha+\beta\ge1>0$. Applying Theorem~\ref{thm:max_principle} to $w$ gives an
$L^\infty$ estimate for $w$, and therefore for $c$.

The corresponding interface problem on the fixed interval is
\begin{equation}
\begin{aligned}
    \p_t c + v\p_x c &= \p_{xx} c + f,\quad && \text{ in }\mc B\times(0,T], \\
    \jump{c} &= 0, \quad && \text{ on }\Sigma_T,\\
    \jump{\p_x c} &= \psi, \quad && \text{ on }\Sigma_T,\\
    \frac{1}{2}\psi + \{ \p_x c +\alpha c\} &= g,\quad && \text{ on }\Sigma_T,\\
    c &= c_0, \quad && \text{ in }\Omega\times\{t=0\}, \\
    c &= q, \quad && \text{ on }\{0\}\times (0,T],\\
    c &= 0, \quad && \text{ on }\{L\}\times (0,T],
\end{aligned}
\end{equation}
where $\psi$ is the interface density.
The artificial boundary condition at $x=L$ closes the extended problem on the
box. It does not impose an additional condition on the physical solution
in $(0,\gamma(t))$: after the interface unknown is eliminated, the interior
rows decouple from the exterior rows, as shown in Proposition~\ref{prop:red-scheme}.
Let $\Omega_{\Gamma,T}\subset \mc B\times(0,T]$ denote a fixed-width space--time neighborhood of
$\Sigma_T$, for example $\{(x,t):0<t\le T,\ |x-\gamma(t)|<r_\Gamma\}$ for a
fixed $r_\Gamma>0$, and let $\Omega_\Gamma(0)=\{x\in\mc B:\ |x-\gamma(0)|<r_\Gamma\}$ be its
initial cross section. The one-dimensional correction function
$\Corr:\Omega_{\Gamma,T}\to \R$ satisfies the following equation.
\begin{equation}
\begin{aligned}
    \p_t \Corr + v\p_x \Corr &= \p_{xx} \Corr + F,\quad &&\text{ in }\Omega_{\Gamma,T}, \\
    \Corr &= 0, \quad &&\text{ on }\Sigma_T,\\
    \p_x \Corr &= \psi, \quad &&\text{ on }\Sigma_T,\\
    \Corr &= 0, \quad &&\text{ in } \Omega_\Gamma(0).
\end{aligned}
\end{equation}
The relation $\p_t \Corr + \dot{\gamma}(t)\p_x\Corr = \frac{d}{dt}\Corr(\gamma(t),t) = 0$ gives
\begin{align}
\p_{xx}\Corr(\gamma(t),t) = -\alpha(t)\psi(t) - F(\gamma(t),t) = -\alpha(t)\psi(t) - f(\gamma(t),t).
\end{align}
Taylor expansion of the correction function at $(\gamma,t)$ gives
\begin{equation}
\begin{aligned}
    \Corr(x,t) &= (x-\gamma(t))\psi(t) - \frac{1}{2}\paren{x-\gamma(t)}^2\paren{\alpha(t)\psi(t) + f(\gamma(t),t)} + \mc O\paren{\paren{x-\gamma(t)}^3}.
\end{aligned}
\end{equation}
\subsection{Discrete setting and notation}
We use a uniform grid on $\mc B$ with spacing $h=L/N$ and nodes $x_j=jh$,
$j=0,1,\dots,N$. We also use time levels $t^n=n\tau$, $n=0,1,\dots,N_t$,
with $\tau=T/N_t$. Let $\mc B_h=\{x_1,x_2,\dots,x_{N-1}\}$ be the set of
interior grid nodes.
Define $\sigma(t) = \frac{\gamma(t)}{h} - \lfloor\frac{\gamma(t)}{h}\rfloor\in [0,1)$.
At a fixed time $t^n$, we define the interface node index $j(n)=\lfloor\frac{\gamma(t^n)}{h}\rfloor$ such that $\gamma^n = \gamma(t^n)\in[x_{j(n)}, x_{j(n)+1})$ and set $\sigma^n = \sigma(t^n)$.
We impose the CFL condition
$\tau < h/\max_{[0,T]}\abs{\dot{\gamma}(t)}$. Then
$j(n)-j(n-1)\in\{-1,0,1\}$, so the interface moves at most one grid cell per
time step.
Throughout the estimates below, $C$ denotes a generic constant that may depend on bounds for the exact solution, $v$, $\alpha$, $\gamma$, the data, $T$, and $L$, but is independent of $h$, $\tau$, and the time index $n$.

Let $c_j^n$ approximate $c(x_j,t^n)$. We collect exact and numerical nodal
values as
\begin{equation}
\bm C^n=(c(x_1,t^n),\dots,c(x_{N-1},t^n))^T,\qquad
\bm c^n=(c_1^n,\dots,c_{N-1}^n)^T .
\end{equation}
Both vectors lie in $\R^{N-1}$.
Thus uppercase bold symbols such as $\bm C^n$ contain exact nodal values, while lowercase bold symbols such as $\bm c^n$ denote numerical unknowns.
We define the restriction operator
$
R^n =  \begin{bmatrix}
        I  & 0
    \end{bmatrix}\in \R^{j(n)\times (N-1)}
$
so that it restricts a grid function on $\mc B_h$ to the interior $\Omega_h^n$.
Let $\delta_{ij}$ be the Kronecker delta and $e_j\in\R^{N-1}$ be the $j$-th standard basis vector.

The finite difference scheme for the interface problem is
\begin{equation}\label{eqn:main_discrete}
\begin{aligned}
&-\paren{\frac{\tau}{h^2}+\frac{v\tau}{2h}}\paren{ c_{j-1}^n - \delta_{j,j(n)+1}\Corr_{j(n)}^n } + \paren{1+\frac{2 \tau}{h^2}}c_j^n \\
& - \paren{\frac{\tau}{h^2}-\frac{v\tau}{2h}}\paren{ c_{j+1}^n + \delta_{j,j(n)}\Corr_{j(n)+1}^n } \\
&= c_j^{n-1} + \delta_{j(n-1), j(n)-1}  \delta_{j,j(n)}\Corr_{j(n)}^{n-1}  - \delta_{j(n-1),j(n)+1}\delta_{j,j(n)+1}\Corr_{j(n)+1}^{n-1} + \tau f(x_j, t^n) .
\end{aligned}
\end{equation}
Here $\Corr_j^n$ denotes the approximate correction function, with
\begin{align}
    \Corr_{j(n)}^n &=  a_{n,1} h \psi^n + b_{n,1}h^2 f(\gamma^n,t^n), \\
    \Corr_{j(n)+1}^n &= a_{n,2} h \psi^n  + b_{n,2}h^2 f(\gamma^n,t^n).
\end{align}
At the grid node $x_1$, the Dirichlet condition $c=q$ gives
\begin{equation}
\begin{aligned}
\paren{1+\frac{2 \tau}{h^2}}c_1^n  - \paren{\frac{\tau}{h^2}-\frac{v\tau}{2h}} c_2^n = c_1^{n-1} +  \tau f(x_1,t^n) + \paren{\frac{\tau}{h^2}+\frac{v\tau}{2h}} q^n,
\end{aligned}
\end{equation}
where the coefficients are
\begin{align}
    &a_{n,1} = -\sigma^n\paren{1+\frac{1}{2}\alpha^n\sigma^n h},
    &&b_{n,1} = -\frac{1}{2}\paren{\sigma^n}^2,\\
    &a_{n,2} = \paren{1-\sigma^n}\paren{ 1-\frac{1}{2}\alpha^n\paren{1-\sigma^n}h},
    &&b_{n,2} = -\frac{1}{2}\paren{1-\sigma^n}^2.
\end{align}
The Robin condition $\p_x c+\alpha c=g$ is discretized as
\begin{equation}\label{eqn:bc-discrete}
\begin{aligned}
\frac{ \sigma^n-\frac{1}{2} }{ h} c_{j(n)-1}^n + \frac{ -2\sigma^n+\paren{1-\sigma^n}\alpha^nh}{h} c_{j(n)}^n  + \frac{\sigma^n+\frac{1}{2}+\sigma^n\alpha^nh} {h}\paren{c_{j(n)+1}^n + \Corr_{j(n)+1}^n} = g^n.
\end{aligned}
\end{equation}
The three-point stencil in~\eqref{eqn:bc-discrete} differs slightly from the proximity-based interpolation rule in Section~\ref{sec:method}. We fix the stencil as $x_{j(n)-1}$, $x_{j(n)}$, and $x_{j(n)+1}$, so it always contains two interior nodes and one exterior node, regardless of the position of $\gamma^n$ in $[x_{j(n)},x_{j(n)+1})$. This allows us to decouple the interior unknowns from the exterior ones; see Proposition~\ref{prop:red-scheme}.

We may write the discrete system in vector form using the following matrices and vectors:
\begin{align}
    A &=
\begin{bmatrix}
2 & -1   &        &  \\
-1 & 2   &        &  \\
   &    & \ddots & -1 \\
   &       & -1 & 2
\end{bmatrix},\quad
B =
\begin{bmatrix}
0 & 1     &        &  \\
-1 & 0    &        &  \\
   &     & \ddots & 1 \\
   &       & -1 & 0
\end{bmatrix},\label{eqn:AB-mat}\\
    E^n &= a_{n,2}\paren{1-\frac{vh}{2}} e_{j(n)} -a_{n,1} \paren{1+\frac{vh}{2}} e_{j(n)+1},\\
    D^n &= \paren{\sigma^n-\frac{1}{2}}e_{j(n)-1} + \paren{-2\sigma^n+\paren{1-\sigma^n}\alpha^nh} e_{j(n)} + \paren{\sigma^n+\frac{1}{2}+\sigma^n\alpha^nh}e_{j(n)+1},\\
    Q^n &= \paren{\sigma^n+\frac{1}{2}+\sigma^n\alpha^nh} a_{n,2},\\
    Z^{n-1} &= \delta_{j(n-1), j(n)-1}a_{n-1,2}e_{j(n)} - \delta_{j(n-1),j(n)+1}a_{n-1,1}e_{j(n)+1}.
\end{align}

For $n=1,2,\cdots,N_t$, the scheme is
\begin{subequations}\label{eqn:scheme}
\begin{align}
    \paren{I + \frac{\tau}{h^2}A + \frac{v\tau}{2h}B}\bm c^n - \frac{\tau}{h}E^n \psi^n &= \bm c^{n-1} + h  Z^{n-1}\psi^{n-1} + \tau \wt{\bm f}^n + \tau\rho  q^n e_1,\\
    \frac{1}{h} (D^n)^T\bm c^n +  Q^n\psi^n &= \wt g^n,
\end{align}
\end{subequations}
where $\rho = \frac{1}{h^2}+\frac{v}{2h}$ and
\begin{equation}
\begin{aligned}
\wt f_j^n
    &= f_j^n + \paren{ - \paren{1+\frac{vh}{2}}\delta_{j, j(n)+1}  b_{n,1}  + \paren{1-\frac{vh}{2}} \delta_{j, j(n)} b_{n,2}} f(\gamma^n,t^n) \\
    &+ \paren{\delta_{j,j(n)} \delta_{j(n-1), j(n)-1}b_{n-1,2}   - \delta_{j,j(n)+1}\delta_{j(n-1),j(n)+1}b_{n-1,1} }\frac{h^2}{\tau}f(\gamma^{n-1},t^{n-1}).
\end{aligned}
\end{equation}
The modified boundary data are
\begin{equation}
    \wt g^n = g^n - D^n_{j(n)+1} b_{n,2}hf(\gamma^n,t^n).
\end{equation}

\subsection{Linear algebra}
We establish a few linear algebraic properties of the numerical scheme. 
We first prove solvability.
Define
\begin{align}
    K^n = I + \frac{\tau}{h^2}\paren{A + \frac{vh}{2} B + \frac{1}{Q^n} E^n (D^{n})^T}, \quad M^{n-1} &=  I - \frac{1}{Q^{n-1}}Z^{n-1}(D^{n-1})^T.
\end{align}
For sufficiently small $h$, $Q^n>0$, so $K^n$ and $M^{n-1}$ are well defined. Since $Q^n$ contains the factor $1-\sigma^n$, it can become small as $\sigma^n$ approaches $1$. In the interior row, however, this factor cancels between $E^n$ and $Q^n$; a factor $(1-\sigma^n)^{-1}$ can remain only in the exterior row associated with $x_{j(n)+1}$. We will further show that the interior solution is decoupled from the exterior ones, so a small value of $Q^n$ does not enter the stability estimate for the interior solution $R^n\bm c^n$.

\begin{proposition}\label{prop:solvability}
For sufficiently small $h$, the scheme~\eqref{eqn:scheme} is solvable.
\end{proposition}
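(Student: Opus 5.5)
The plan is to eliminate the scalar unknown $\psi^n$ and reduce solvability at each time level to the invertibility of the matrix $K^n$ defined above. Since $Q^n>0$ for small $h$, the second equation of~\eqref{eqn:scheme} gives
\begin{equation*}
\psi^n = \frac{1}{Q^n}\paren{\wt g^n - \frac1h (D^n)^T\bm c^n}.
\end{equation*}
Substituting this into the first equation yields $K^n\bm c^n = \text{(known right-hand side)}$. Here the term $-\frac{\tau}{h}E^n\psi^n$ contributes exactly $\frac{\tau}{h^2}\frac{1}{Q^n}E^n(D^n)^T\bm c^n$ plus data. Hence the square system for $(\bm c^n,\psi^n)$ is uniquely solvable if and only if $K^n$ is nonsingular. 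Proceeding by induction on $n$, with $\bm c^{n-1}$ and $\psi^{n-1}$ known, gives solvability for all $n$.

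To show that $K^n$ is nonsingular, I would argue that $K^n$ is (after a possible row scaling) a strictly diagonally dominant matrix, or at least a nonsingular M-matrix for $h$ small and $\tau$ restricted by the CFL condition. Away from rows $j(n)$ and $j(n)+1$, the matrix $I+\frac{\tau}{h^2}(A+\frac{vh}{2}B)$ has diagonal $1+2\tau/h^2$ and off-diagonals $-\frac{\tau}{h^2}(1\pm\frac{vh}{2})$. For $h<2/|v|$ these are nonpositive with absolute row sum $2\tau/h^2$, so the row has strict dominance margin $1$. The rank-one perturbation $\frac{1}{Q^n}E^n(D^n)^T$ only modifies rows $j(n)$ and $j(n)+1$, and only in columns $j(n)-1,j(n),j(n)+1$. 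I would compute these two rows explicitly using the formulas for $a_{n,1},a_{n,2},D^n,Q^n$.

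For row $j(n)$, the factor $a_{n,2}/Q^n=(\sigma^n+\frac12+\sigma^n\alpha^nh)^{-1}$ is bounded. The added entries are $(1-\frac{vh}{2})(\sigma^n-\frac12,\,-2\sigma^n,\,\sigma^n+\frac12)/(\sigma^n+\frac12)+O(h)$. Adding the original entries $-(1+\frac{vh}2),\,2,\,-(1-\frac{vh}2)$ should cancel the coupling to $c_{j(n)+1}$ to leading order. This leaves a row in which the diagonal $\frac{\tau}{h^2}\cdot\frac{2\cdot\frac12+\dots}{\sigma+\frac12}$ dominates the $c_{j(n)-1}$ entry, which equals $-\frac{\tau}{h^2}\frac{1}{\sigma+\frac12}$ to leading order. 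The resulting interior row is then (weakly) diagonally dominant with the extra $1$ on the diagonal, giving strict dominance.

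Row $j(n)+1$ is the main obstacle. Its coefficient $a_{n,1}/Q^n$ contains the factor $(1-\sigma^n)^{-1}$, which can be large, so direct diagonal dominance may fail. I would handle this by exploiting the block structure. The modification leaves columns $>j(n)+1$ untouched in rows $\le j(n)$, and the cancellation above decouples row $j(n)$ from $c_{j(n)+1}$ up to $O(h)$. More cleanly, I would first solve for the interior unknowns $R^n\bm c^n$, whose submatrix is a dominant tridiagonal M-matrix. Then I would solve the exterior rows $j(n)+1,\dots,N-1$, noting that the diagonal entry of row $j(n)+1$ equals $1+\frac{\tau}{h^2}\bigl(2+\frac{-a_{n,1}(1+vh/2)(\sigma+\frac12+\dots)}{Q^n}\bigr)$, which is positive and large. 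That entry is at least as large in magnitude as the off-diagonal contributions from exterior columns. The couplings to interior columns go to the right-hand side once the interior part is known. This block-triangular argument (interior block invertible, exterior block an M-matrix) gives nonsingularity of $K^n$ for sufficiently small $h$, and hence solvability of~\eqref{eqn:scheme}.
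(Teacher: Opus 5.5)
Your route is the paper's: use $Q^n>0$ to eliminate $\psi^n$, then show that $K^n$ is block lower triangular with respect to the interior/exterior splitting and that both diagonal blocks are M-matrices. The paper phrases the elimination as a row operation giving a block lower-triangular matrix with diagonal blocks $K^n$ and $Q^n$. Your treatment of the exterior block is fine.

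\textbf{Exactness of the decoupling.} It must hold exactly, not ``to leading order'' or ``up to $O(h)$.'' The third entry you add in row $j(n)$ is exactly $E^n_{j(n)}D^n_{j(n)+1}/Q^n=1-\frac{vh}{2}$, because $Q^n=D^n_{j(n)+1}a_{n,2}$ and $E^n_{j(n)}=a_{n,2}(1-\frac{vh}{2})$. Hence $(K^n)_{j(n),j(n)+1}=0$ identically; this is the identity the paper records. You need it. Any $O(h)$ remainder in $A+\frac{vh}{2}B+\frac{1}{Q^n}E^n(D^n)^T$ enters $K^n$ multiplied by $\tau/h^2$, i.e.\ as an $O(\tau/h)$ coupling that the CFL condition does not make small. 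Then you could not solve for the interior unknowns first.

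\textbf{The gap: dominance of row $j(n)$ in the interior block.} At leading order, the $\tau/h^2$ parts of the diagonal and of the $(j(n),j(n)-1)$ entry have equal magnitude $\frac{\tau}{h^2}(\sigma^n+\frac12)^{-1}$. So dominance is decided by the next-order terms, which are $O(\tau/h)$ and not negligible against the ``extra $1$.'' Exactly,
\begin{equation*}
(K^n)_{j(n),j(n)-1}+(K^n)_{j(n),j(n)}=1+\frac{\tau}{h}\,\frac{\bigl(1-\frac{vh}{2}\bigr)\alpha^n}{\sigma^n+\frac12+\sigma^n\alpha^n h}.
\end{equation*}
For $\alpha^n\ge0$ (or $\tau\ll h$) your argument works. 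But $\alpha^n<0$ is within scope: the paper handles $\alpha_0<0$ by the exponential transform, and its own one-dimensional test has $\alpha=0.2\cos t-\frac12$. Meanwhile the CFL condition only gives $\tau/h<1/\max\abs{\dot\gamma}$. For example, $\tau/h=2$, $\alpha^n=-\frac12$ and $\sigma^n$ near $0$ give a row sum near $-1$. Then row $j(n)$ is not diagonally dominant and your argument yields nothing.

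\textbf{How to repair it.} The M-matrix conclusion is still true, but you need a positive test vector other than $\bm 1$. Take $x=(e^{\beta ih})_{i=1}^{j(n)}$ with $\beta+\alpha^n\ge1$. The factor $e^{-\beta h}$ on the negative $(j(n),j(n)-1)$ entry adds about $\frac{\tau}{h}\frac{2\beta}{2\sigma^n+1}$, so
\begin{itemize}
\item in row $j(n)$, $(Kx)_i/x_i=1+\frac{\tau}{h}\frac{2(\alpha^n+\beta)}{2\sigma^n+1}+O(\tau)$;
\item in the regular rows, $(Kx)_i/x_i=1-(\beta^2-v\beta)\tau+O(\tau h^2)$.
\end{itemize}
Hence $R^nK^n(R^n)^Tx\ge(1-C\tau)x>0$ for $h,\tau$ small, which gives a nonsingular M-matrix. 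This is the weighted row-sum estimate of Lemma~\ref{lem:KM-mat-o2}. The paper's proof of this proposition itself only cites the sign pattern, so the quantitative step is actually supplied by that lemma.
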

\begin{proof}
Since $Q^n > 0$ uniformly for sufficiently small $h$, we can apply a row operation to the coefficient matrix of~\eqref{eqn:scheme} to obtain an equivalent block-triangular matrix
\begin{equation}\label{eqn:eqv-block}
\begin{bmatrix}
K^n & 0\\
\frac{1}{h}(D^n)^T & Q^n
\end{bmatrix}.
\end{equation}

Let $\wt A = A + \frac{vh}{2} B + \frac{1}{Q^n} E^n (D^{n})^T$.
For sufficiently small $h$, the matrix $I + \frac{\tau
}{h^2}A+\frac{v\tau}{2h}B$ is an M-matrix and is invertible. Note that the matrix $\frac{1}{Q^n}E^n(D^n)^T$ is nonzero only for the block $j(n):j(n)+1, j(n)-1:j(n)+1$.
A direct calculation gives
\begin{equation}
\begin{aligned}
    &\wt{A}_{j(n), j(n)+1}  = -1 + \frac{vh}{2} + \frac{\paren{1-\frac{vh}{2}} \paren{\sigma^n+\frac{1}{2}+\sigma^n\alpha^nh} }{\paren{\sigma^n+\frac{1}{2}+\sigma^n\alpha^nh} } = 0.
\end{aligned}
\end{equation}
Hence, the block $K^n_{j(n):j(n), j(n)+1:N-1}$ is zero.
The remaining entries needed below are
\begin{equation}
\begin{aligned}
    &\wt{A}_{j(n), j(n)-1}  = -1 - \frac{vh}{2} + \frac{\paren{1-\frac{vh}{2}} \paren{\sigma^n-\frac{1}{2}} }{\paren{\sigma^n+\frac{1}{2}+\sigma^n\alpha^nh} } = -1 + \frac{2\sigma^n-1}{2\sigma^n+1} + \mc O(h), \\
    &\wt{A}_{j(n), j(n)} = 2 + \frac{\paren{1-\frac{vh}{2}} \paren{-2\sigma^n+\paren{1-\sigma^n}\alpha^n\sigma^nh} }{\paren{\sigma^n+\frac{1}{2}+\sigma^n\alpha^nh} } = 2 - \frac{4\sigma^n}{2\sigma^n+1} + \mc O(h),  \\
    &\wt{A}_{j(n)+1, j(n)+1} = 2 - \frac{a_{n,1}\paren{1+\frac{vh}{2}} }{a_{n,2} } = 2 + \frac{\sigma^n}{1-\sigma^n} + \mc O(h).
\end{aligned}
\end{equation}
For sufficiently small $h$, the displayed entries have the signs required by the M-matrix argument. Then, both diagonal blocks of $K^n$ are M-matrices and invertible. Hence, $K^n$ is invertible. Combining the fact that $Q^n$ is nonzero, we conclude that the matrix~\eqref{eqn:eqv-block} is invertible, and so is the original system in~\eqref{eqn:scheme}.
\end{proof}

As a consequence, the discrete boundary integral equation obtained from the
Schur complement of~\eqref{eqn:scheme} is solvable.
\begin{corollary}\label{cor:schur-inv}
For sufficiently small $h$, the matrix
\begin{equation}
    Q^n + \frac{\tau}{h^2} (D^n)^T\paren{I + \frac{\tau
}{h^2}A+\frac{v\tau}{2h}B}^{-1} E^n
\end{equation}
is invertible.
\end{corollary}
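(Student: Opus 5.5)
The plan is to read the statement off Proposition~\ref{prop:solvability} through the block-determinant (Schur complement) identity. Write $P^n = I + \frac{\tau}{h^2}A+\frac{v\tau}{2h}B$ for the coefficient matrix of $\bm c^n$ in~\eqref{eqn:scheme}. As noted in the proof of Proposition~\ref{prop:solvability}, $P^n$ is an M-matrix for sufficiently small $h$, hence invertible. The full coefficient matrix of~\eqref{eqn:scheme} is
\begin{equation*}
\mathcal M^n=\begin{bmatrix}
P^n & -\frac{\tau}{h}E^n\\
\frac1h (D^n)^T & Q^n
\end{bmatrix}.
\end{equation*}
Proposition~\ref{prop:solvability} shows this matrix is invertible for sufficiently small $h$.

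First, compute the Schur complement of the $(1,1)$ block:
\begin{equation*}
S^n = Q^n - \frac1h (D^n)^T (P^n)^{-1}\Big(-\frac{\tau}{h}E^n\Big) = Q^n + \frac{\tau}{h^2}(D^n)^T (P^n)^{-1}E^n .
\end{equation*}
This is a scalar, and it is exactly the quantity in the statement.

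Second, apply the standard identity $\det \mathcal M^n = \det P^n \cdot S^n$. Equivalently, factor
\begin{equation*}
\mathcal M^n = \begin{bmatrix} I & 0\\ \frac1h (D^n)^T (P^n)^{-1} & 1\end{bmatrix}\begin{bmatrix} P^n & -\frac{\tau}{h}E^n\\ 0 & S^n\end{bmatrix}.
\end{equation*}
Since $\det\mathcal M^n\neq 0$ and $\det P^n\neq0$, it follows that $S^n\neq 0$, i.e.\ $S^n$ is invertible.

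One could also argue directly by elimination: if $S^n=0$, then $(-(P^n)^{-1}\tfrac{\tau}{h}E^n\cdot(-1)\cdot\ldots)$ would yield a nontrivial kernel vector of $\mathcal M^n$. Explicitly, the vector $(\bm c,\psi)=(\tfrac{\tau}{h}(P^n)^{-1}E^n,1)$ would lie in the kernel, contradicting Proposition~\ref{prop:solvability}.

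The only point requiring care is that the smallness threshold on $h$ must be the one for which both $P^n$ is an M-matrix and $Q^n>0$ together with the invertibility of $K^n$ hold. Taking the minimum of the thresholds used in Proposition~\ref{prop:solvability} suffices. I do not expect a genuine obstacle beyond this bookkeeping.
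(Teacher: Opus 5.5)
Your proposal is correct and is the same argument as the paper's. Proposition~\ref{prop:solvability} gives invertibility of the full coefficient matrix, and $I+\frac{\tau}{h^2}A+\frac{v\tau}{2h}B$ is invertible as an M-matrix, so the scalar Schur complement $S^n$ must be nonzero; you make this explicit through $\det\mathcal M^n=\det\bigl(I+\frac{\tau}{h^2}A+\frac{v\tau}{2h}B\bigr)\,S^n$ or through the kernel vector $\bigl(\tfrac{\tau}{h}(P^n)^{-1}E^n,\,1\bigr)$. As minor cleanups, drop the garbled parenthetical just before that kernel vector, and rename your $P^n$, since the paper already uses $P^n$ for $R^{n+1}M_w^n(R^n)^T(K_I^n)^{-1}$.
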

\begin{proof}
Since the top-left block $I+\frac{\tau}{h^2}A + \frac{v\tau}{2h}B$ is invertible for sufficiently small $h$, by Proposition~\ref{prop:solvability}, the Schur complement is also invertible.
\end{proof}

We next show that the numerical scheme can be written purely in terms of $R^n\bm{c}^n$, the values of $\bm{c}^n$ within the interior region $\Omega^n_h$.
\begin{proposition}\label{prop:red-scheme}
For sufficiently small $h$, the interior solution $R^n\bm c^n$ satisfies:
\begin{equation}\label{eqn:Rn-un-eqn}
\begin{aligned}
    R^n K^n (R^n)^T R^n \bm  c^n &=  R^n M^{n-1} (R^{n-1})^T R^{n-1} \bm  c^{n-1} \\
    &+ R^n\paren{ \tau \paren{\wt{\bm f}^n + \rho  q^n e_1 + \frac{1}{h Q^n} E^n \wt g^n} + \frac{h}{Q^{n-1}}  Z^{n-1} \wt g^{n-1} },
\end{aligned}
\end{equation}
for $n=1,2,\dots, N_{t}$.
\end{proposition}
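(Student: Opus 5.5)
We start from the two equations of the scheme~\eqref{eqn:scheme} at level $n$ and eliminate $\psi^n$ using the second (boundary) row. Since $Q^n>0$ for small $h$, that row gives
\[
\psi^n = \frac{1}{Q^n}\paren{\wt g^n - \frac1h (D^n)^T\bm c^n},
\]
and similarly at level $n-1$,
\[
\psi^{n-1} = \frac{1}{Q^{n-1}}\paren{\wt g^{n-1} - \frac1h (D^{n-1})^T\bm c^{n-1}}.
\]
Substituting both into the first row moves the term $-\frac{\tau}{h}E^n\psi^n$ to the left as $+\frac{\tau}{h^2 Q^n}E^n(D^n)^T\bm c^n - \frac{\tau}{hQ^n}E^n\wt g^n$. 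On the right, $hZ^{n-1}\psi^{n-1}$ becomes $\frac{h}{Q^{n-1}}Z^{n-1}\wt g^{n-1} - \frac{1}{Q^{n-1}}Z^{n-1}(D^{n-1})^T\bm c^{n-1}$. Collecting terms gives the full-grid identity
\[
K^n\bm c^n = M^{n-1}\bm c^{n-1} + \tau\paren{\wt{\bm f}^n + \rho q^n e_1 + \tfrac{1}{hQ^n}E^n\wt g^n} + \tfrac{h}{Q^{n-1}}Z^{n-1}\wt g^{n-1},
\]
which is exactly the block-triangular reduction already used in Proposition~\ref{prop:solvability}. For $n=1$ we use $\psi^0=0$ with the compatibility convention, or equivalently treat the $n-1$ terms as vanishing.

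Next we apply $R^n$ to both sides and show two decoupling facts.

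\textbf{(i)} $R^nK^n = R^nK^n(R^n)^TR^n$, i.e.\ the rows $1,\dots,j(n)$ of $K^n$ have no entries in columns $j(n)+1,\dots,N-1$. Rows $j<j(n)$ are tridiagonal, and $\frac1{Q^n}E^n(D^n)^T$ only touches rows $j(n),j(n)+1$. So it suffices to check the entry $(j(n),j(n)+1)$. That entry is $\wt A_{j(n),j(n)+1}=0$, which was computed in the proof of Proposition~\ref{prop:solvability}: the $(1-\sigma^n)$ factor of $a_{n,2}$ cancels with $Q^n$.

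\textbf{(ii)} $R^nM^{n-1} = R^nM^{n-1}(R^{n-1})^TR^{n-1}$, i.e.\ rows $1,\dots,j(n)$ of $M^{n-1}$ vanish in columns beyond $j(n-1)$. This is the main case analysis. The perturbation $\frac{1}{Q^{n-1}}Z^{n-1}(D^{n-1})^T$ is nonzero only in the following situations:
\begin{itemize}
\item If $j(n-1)=j(n)-1$, it affects row $j(n)$ with columns $j(n-1)-1,j(n-1),j(n-1)+1=j(n)$.
\item If $j(n-1)=j(n)+1$, it affects row $j(n)+1$, which $R^n$ discards.
\item If $j(n-1)=j(n)$, it vanishes.
\end{itemize}

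In the first case, the identity part contributes the diagonal entry $(j(n),j(n))$ while the perturbation contributes
\[
-\frac{a_{n-1,2}}{Q^{n-1}}D^{n-1}_{j(n-1)+1}=-1
\]
at the same column $j(n-1)+1$. This again uses $Q^{n-1}=D^{n-1}_{j(n-1)+1}a_{n-1,2}$, so the two cancel exactly. The remaining columns $j(n-1)-1,j(n-1)$ lie within $\Omega_h^{n-1}$.

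In the other two cases, we must check that the identity's columns in rows $\le j(n)$ satisfy the required range. If $j(n)\le j(n-1)$, all of rows $1,\dots,j(n)$ have their diagonal at a column $\le j(n-1)$. In the first case, the diagonal at row $j(n)$ is exactly the entry that cancels.

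With (i) and (ii), applying $R^n$ and inserting $(R^{n-1})^TR^{n-1}$ yields~\eqref{eqn:Rn-un-eqn}.

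The main obstacle is the bookkeeping in step (ii): verifying that the cancellation $1-\frac{a_{n-1,2}D^{n-1}_{j(n-1)+1}}{Q^{n-1}}=0$ holds precisely at the column that would otherwise reach the exterior unknown $c^{n-1}_{j(n-1)+1}$. I expect this to work directly from the definitions of $Q^{n-1}$ and $Z^{n-1}$, and I would verify each CFL case $j(n)-j(n-1)\in\{-1,0,1\}$ explicitly.
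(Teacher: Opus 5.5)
Your argument is correct and is essentially the paper's proof: eliminate $\psi^n,\psi^{n-1}$ via the boundary row to get $K^n\bm c^n=M^{n-1}\bm c^{n-1}+\cdots$, then use $K^n_{j(n),j(n)+1}=0$. Your step (ii) adds something the paper's proof leaves to the subsequent remark, namely the column-support check on $M^{n-1}$ via the cancellation $M^{n-1}_{j(n),j(n)}=1-a_{n-1,2}D^{n-1}_{j(n-1)+1}/Q^{n-1}=0$ when $j(n-1)=j(n)-1$.

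The one slip is at $n=1$. The stated formula, which contains $M^0$ and $\frac{h}{Q^0}Z^0\wt g^0$ (and the later analysis uses $\wt g^0$ and $r_4^0$), needs $\psi^0$ defined by the boundary row at level $0$, as the paper tacitly assumes. Setting $\psi^0=0$ instead yields $\bm c^0$ in place of $M^0\bm c^0+\frac{h}{Q^0}Z^0\wt g^0$. These agree only if $Z^0=0$ or $\frac1h(D^0)^T\bm c^0=\wt g^0$, so the two conventions are not equivalent.
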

\begin{proof}
For sufficiently small $h$, $Q^n > 0$ uniformly, so we have
\begin{equation}
    \psi^{n-1} = -\frac{1}{h Q^{n-1}} (D^{n-1})^T\bm c^{n-1} + \frac{\wt g^{n-1}}{Q^{n-1}}.
\end{equation}
Eliminating $\psi^{n-1}$ and $\psi^n$ from~\eqref{eqn:scheme} gives the full-grid equation
\begin{align}\label{eqn:un-eqn}
    K^n \bm  c^n  =  M^{n-1}  \bm  c^{n-1} + \tau\paren{\wt{\bm f}^n + \rho  q^n e_1 + \frac{1}{hQ^n} E^n \wt g^n} + \frac{h}{Q^{n-1}}  Z^{n-1} \wt g^{n-1}.
\end{align}
The matrix $K^n$ is tridiagonal and $K^n_{j(n),j(n)+1}=0$, so the top-right block of $K^n$ is zero. Hence the interior unknown $R^n\bm c^n$ decouples from the exterior unknowns. Restricting~\eqref{eqn:un-eqn} to the interior gives~\eqref{eqn:Rn-un-eqn}.
\end{proof}
\begin{remark}
The reason why interior and exterior variables decouple can also be seen as follows. Coupling between interior and exterior variables can only take place when $j=j(n)$ in Eq. \eqref{eqn:main_discrete} through the following two terms. The first is the term on the second line of \eqref{eqn:main_discrete}. The second is the second term on the last line, which is non-zero when $j(n)=j(n-1)+1$ (when the boundary point $\gamma(t)$ crosses one grid point in the positive direction). Note that both terms are constant multiples of $c_{j(m)+1}^m+\Corr_{j(m)+1}^m, m=n,n-1$. Thus, we can replace these terms with linear combinations of $c_{j(m)-1}^m$ and $c_{j(m)}^m$ using the discretization of the boundary condition \eqref{eqn:bc-discrete}.
\end{remark}

\subsection{Stability}

We next prove the $\ell^\infty$ estimate using a discrete maximum principle. We first employ the exponential weighting in \eqref{eqn:prob-w} so that the maximum principle is applicable. Fix $\beta$ as in the continuous exponential transform and set $\lambda=\beta^2-v\beta$ and
$W=\operatorname{diag}(e^{\beta ih})_{i=1}^{N-1}$. We introduce the transform
\begin{equation}\label{eqn:weighted-transform}
    \bm w^n = (1-\lambda\tau)^n W^{-1}\bm c^n,
    \qquad
    \bm c^n = (1-\lambda\tau)^{-n}W\bm w^n .
\end{equation}
The transformed form of~\eqref{eqn:Rn-un-eqn} is
\begin{equation}\label{eqn:w-eqn}
\begin{aligned}
    &\paren{1-\lambda \tau}^{-n} R^n K_w^n (R^n)^T R^n \bm  w^n =  \paren{1-\lambda \tau}^{-n+1} R^n M_w^{n-1} (R^{n-1})^T R^{n-1} \bm  w^{n-1} \\
    &+ R^nW^{-1}\paren{ \tau \paren{\wt{\bm f}^n + \rho  q^n e_1 + \frac{1}{h Q^n} E^n \wt g^n} + \frac{h}{Q^{n-1}}  Z^{n-1} \wt g^{n-1} },
\end{aligned}
\end{equation}
where
\begin{equation}
    K_w^n = W^{-1} K^n W, \quad M_w^{n-1} = W^{-1} M^{n-1} W.
\end{equation}
For later use, define the interior-grid matrices
\begin{equation}\label{eqn:transfer-matrices}
    K_I^n = R^nK_w^n(R^n)^T\in\R^{j(n)\times j(n)},\quad
    T^n = R^nM_w^{n-1}(R^{n-1})^T\in\R^{j(n)\times j(n-1)}.
\end{equation}
We will also use the following matrix
\begin{equation}
    P^n=R^{n+1}M_w^n(R^n)^T(K_I^n)^{-1}
    \in\R^{j(n+1)\times j(n)}.
\end{equation}

\begin{lemma}\label{lem:KM-mat-o2}
Assume $h<2/\abs{v}$ when $v\ne0$ and $1-\lambda\tau>0$.
For sufficiently small $h$ and $\tau$, the following
statements hold for $n = 1,2,\cdots, N_t-1$:
\begin{enumerate}
    \item The matrix $K_I^n$ is an M-matrix and satisfies
\begin{equation}
    \norm{(K_I^n)^{-1}}_\infty \le 1+C\tau,
\end{equation}
    \item The bound $\norm{T^n}_\infty \le \frac{5}{3}$ holds. If $\tau / h^2 \ge 1/2$, the matrix $P^n$ has nonnegative entries and satisfies
\begin{equation}
    \norm{P^n}_\infty \le 1+C\tau.
\end{equation}
\end{enumerate}
\end{lemma}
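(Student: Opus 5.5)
We will compute the entries of $K_I^n$ and $T^n$ explicitly and argue row by row, using the structure established in the proof of Proposition~\ref{prop:solvability}. Conjugation by the diagonal matrix $W$ multiplies the $(i,k)$ entry by $e^{\beta(k-i)h}$, so $K_w^n$ is tridiagonal. In regular rows $i<j(n)$ its entries are
\begin{equation*}
1+\frac{2\tau}{h^2},\qquad -\frac{\tau}{h^2}\paren{1+\frac{vh}{2}}e^{-\beta h},\qquad -\frac{\tau}{h^2}\paren{1-\frac{vh}{2}}e^{\beta h}.
\end{equation*}
The condition $h<2/\abs{v}$ makes both off-diagonal entries nonpositive. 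The modified row $j(n)$ has entries $\wt A_{j(n),j(n)-1}$ and $\wt A_{j(n),j(n)}$, which were computed before. Since $\sigma^n\in[0,1)$, we have $\wt A_{j(n),j(n)-1}=-2/(2\sigma^n+1)+\mc O(h)<0$ and $\wt A_{j(n),j(n)}=2/(2\sigma^n+1)+\mc O(h)>0$. The coupling to $j(n)+1$ vanishes. So $K_I^n$ is a Z-matrix.

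\textbf{Bound on $(K_I^n)^{-1}$.} To get the $\infty$-norm bound we use the M-matrix criterion $\norm{K^{-1}}_\infty\le 1/\min_i (K\bm 1)_i$, after checking positivity of the row sums, or else a positive test vector $\bm \phi$ with $K_I^n\bm\phi\ge (1-C\tau)\bm\phi$. For regular rows, expand $e^{\pm\beta h}$ to get row sum
\begin{equation*}
1-\tau\paren{\beta^2-v\beta}+\mc O(\tau h).
\end{equation*}
This is where the factor $(1-\lambda\tau)$ enters. Because of the normalization $(1-\lambda\tau)^{-n}$ in \eqref{eqn:w-eqn}, the relevant matrix is effectively $K_I^n/(1-\lambda\tau)$, and its row sums are $\ge 1-C\tau$.

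The boundary row $j(n)$ is the delicate one. Its row sum is
\begin{equation*}
1+\frac{\tau}{h^2}\paren{\wt A_{j(n),j(n)}e^{0}+\wt A_{j(n),j(n)-1}e^{-\beta h}}.
\end{equation*}
The $\mc O(1)$ parts cancel, so the $\mc O(h)$ corrections must be computed. These involve $\alpha^n$, $v$ and $\beta$, and the choice $\beta>\abs{\alpha_0}+1$ should make the net $\mc O(h)$ term positive, mirroring $\alpha+\beta\ge1$ in \eqref{eqn:prob-w}. This gives a row sum $\ge 1+c\tau/h\ge 1$. Combining this with diagonal dominance (or with the test vector $\bm 1$) yields $\norm{(K_I^n)^{-1}}_\infty\le 1+C\tau$.

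\textbf{Bound on $T^n$.} The matrix $M^{n-1}=I-Z^{n-1}(D^{n-1})^T/Q^{n-1}$ differs from the identity only in row $j(n)$ or $j(n)+1$, and only when the interface crosses a node. After restriction by $R^n$, only the case $j(n)=j(n-1)+1$ matters. In that case the modified row $j(n)$ has entries $-a_{n-1,2}D^{n-1}_k/Q^{n-1}$, with $k$ ranging over $j(n-1)-1$ and $j(n-1)$. The factor $1-\sigma^{n-1}$ cancels, leaving
\begin{equation*}
-\frac{\sigma^{n-1}-\tfrac12}{\sigma^{n-1}+\tfrac12},\qquad \frac{2\sigma^{n-1}}{\sigma^{n-1}+\tfrac12},
\end{equation*}
up to $\mc O(h)$. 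A node crossing with the CFL condition forces $\sigma^{n-1}$ close to $1$. Maximizing the absolute row sum over $\sigma\in[0,1]$ gives $\tfrac13+\tfrac43=\tfrac53$, and the other rows have norm $1$. So $\norm{T^n}_\infty\le\tfrac53$.

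\textbf{Bound on $P^n$.} The crossing row of $T^n$ has a negative entry, so $P^n=T^{n+1}(K_I^n)^{-1}$ (with the appropriate restriction) needs a genuine argument. We plan to write $(K_I^n)^{-1}$ via its defining equation. The crossing row of $P^n$ equals a combination of rows $j(n)-1$ and $j(n)$ of $(K_I^n)^{-1}$, and since $K_I^n\,(K_I^n)^{-1}=I$, rows of $(K_I^n)^{-1}$ can be related to each other through row $j(n)$ of $K_I^n$. When $\tau/h^2\ge\tfrac12$, the off-diagonal weights of $K_I^n$ dominate, so the last row of $(K_I^n)^{-1}$ is approximately a positive multiple of the previous row. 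The combination $-\tfrac{\sigma-1/2}{\sigma+1/2}\,r_{j-1}+\tfrac{2\sigma}{\sigma+1/2}\,r_j$ is then a nonnegative combination with total weight $1+\mc O(h)$. Nonnegativity, together with $P^n\bm1=T^{n+1}(K_I^n)^{-1}\bm 1$ and the row-sum bounds, gives $\norm{P^n}_\infty\le1+C\tau$.

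We expect the main obstacles to be this sign verification and the $\mc O(h)$ row-sum bookkeeping in the boundary row.
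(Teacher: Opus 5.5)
Your route is the paper's. You use positive row sums of the weighted tridiagonal $K_I^n$, with the boundary row rescued by an $\mc O(\tau/h)$ term whose sign is fixed by $\beta+\alpha^n\ge1$; the paper's direct calculation confirms this expectation. You bound the crossing row of $T^n$ using the leading entries $-\frac{\sigma-1/2}{\sigma+1/2}$ and $\frac{2\sigma}{\sigma+1/2}$, which give $5/3$ as $\sigma\to1$. For $P^n$ you eliminate $w_{j(n)-1}$ through row $j(n)$ of $K_I^n$, which is what the paper does in ghost-value form, $c_{j(n)+1}=e^{-\beta h}(\eta_1y_{j(n)}+\eta_2w_{j(n)})/\eta_3$. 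One aside is wrong but harmless: the CFL condition does not force $\sigma^{n-1}$ close to $1$ at a crossing. With $\tau$ near $h/\max\abs{\dot\gamma}$, any $\sigma^{n-1}\in(0,1)$ can precede a crossing. Since you maximize over all $\sigma$, nothing depends on this.

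The step that does not deliver the statement as written is the norm bound for $P^n$. Take $\sigma^n>\frac12$. Let $\mu_1<0<\mu_2$ be the crossing-row entries of $T^{n+1}$, and set $k_0=(K_I^n)_{j(n),j(n)}$ and $k_1=-(K_I^n)_{j(n),j(n)-1}>0$. The exact identity $r_{j(n)-1}=(k_0r_{j(n)}-e_{j(n)}^T)/k_1$ turns the crossing row of $P^n$ into $\frac{\abs{\mu_1}}{k_1}e_{j(n)}^T+\paren{\mu_2-\frac{\abs{\mu_1}k_0}{k_1}}r_{j(n)}$. The first coefficient is automatically positive. The second is nonnegative, to leading order, exactly when $\tau/h^2\ge\sigma^n-\frac12$; this is where the threshold $\frac12$ comes from, and it is the paper's $\eta_2>0$.

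You then say the total weight is $1+\mc O(h)$ and conclude $\norm{P^n}_\infty\le1+C\tau$. That does not follow. Since $\tau$ may be as small as $h^2/2$, a positive $\mc O(h)$ term only gives $1+Ch$. The missing fact is the sign of the first-order term. The paper shows the coefficient sum is $e^{-\beta h}(\eta_1+\eta_2)/\eta_3=1-(\beta+\alpha^n)h+\mc O(h^2)\le1$, using $\beta+\alpha^n\ge1$ once more.

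Your alternative hint via $P^n\bm1=T^{n+1}(K_I^n)^{-1}\bm1$ can also be completed. From $P^n\ge0$ and $K_I^n\bm1\ge(1-C\tau)\bm1$ you get $P^n\bm1\le(1-C\tau)^{-1}P^nK_I^n\bm1=(1-C\tau)^{-1}T^{n+1}\bm1$. This route, however, needs the signed crossing-row sum $\mu_1+\mu_2=1-(\beta+\alpha^n)h/\theta^n+\mc O(h^2)$, with $\theta^n=\sigma^n+\frac12+\sigma^n\alpha^nh$, to be at most $1$. Your $T^n$ computation only determined that sum up to $\mc O(h)$. Either way, the first-order bookkeeping you flagged for the boundary row of $K_I^n$ is needed here too.
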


\begin{proof}
To prove (1), we consider the entries of the matrix $K_w^n$ since $K_I^n$ is the top-left $j(n)\times j(n)$ block of $K_w^n$.
The matrix $\frac{1}{Q^n}E^n(D^n)^T$ is nonzero only for the block $j(n):j(n)+1, j(n)-1:j(n)+1$.
So, for $i \neq j(n)$, assuming $h < 2 / \abs{v}$, we have
\begin{align}
    (K_w^n)_{i,i-1} &= -\paren{\frac{\tau}{h^2} + \frac{v \tau}{2 h} } e^{-\beta h} < 0, \\
    (K_w^n)_{i,i+1} &= -\paren{\frac{\tau}{h^2} - \frac{v \tau}{2 h} } e^{\beta h} < 0, \\
    (K_w^n)_{ii} &= 1 + \frac{2\tau}{h^2} > 0.
\end{align}
For sufficiently small $h$,
\begin{equation}
\begin{aligned}
    (K_w^n)_{i,i-1} + (K_w^n)_{i,i} + (K_w^n)_{i,i+1}
    &= 1 + \frac{2\tau}{h^2} \paren{1 - \cosh(\beta h)} + \frac{v \tau}{h} \sinh(\beta h)  \ge 1 - C \tau.
\end{aligned}
\end{equation}
We consider the $j(n)$-th row of $K_I^n$,
\begin{equation}
\begin{aligned}
    (K_w^n)_{j(n),j(n)-1} &= e^{-\beta h}\frac{\tau}{h^2}\paren{ -1 - \frac{vh}{2} + \frac{\paren{1-\frac{vh}{2}} \paren{\sigma^n-\frac{1}{2}} }{\paren{\sigma^n+\frac{1}{2}+\sigma^n\alpha^nh} } },
\end{aligned}
\end{equation}
\begin{equation}
\begin{aligned}
    (K_w^n)_{j(n),j(n)} &=  1 + \frac{\tau}{h^2}\paren{2 + \frac{\paren{1-\frac{vh}{2}} \paren{-2\sigma^n+\paren{1-\sigma^n}\alpha^n\sigma^nh} }{\paren{\sigma^n+\frac{1}{2}+\sigma^n\alpha^nh} }}.
\end{aligned}
\end{equation}
Then, we have
\begin{equation}
\begin{aligned}
    (K_w^n)_{j(n),j(n)-1} + (K_w^n)_{j(n),j(n)}  \ge 1 + \frac{2\tau}{h} \paren{  \frac{\beta + \alpha^n\sigma^n(2-\sigma^n)}{2\sigma^n+1} }- C\tau\ge 1 - C\tau,
\end{aligned}
\end{equation}
where we have used
\begin{equation}
    \beta + \alpha^n\sigma^n(2-\sigma^n) \ge \beta + \alpha_0\sigma^n(2-\sigma^n) \ge 1 +\abs{\alpha_0} - \abs{\alpha_0}  \ge 1.
\end{equation}
Thus, for sufficiently small $h$, we have
\begin{equation}\label{eqn:kw-rowsum}
    K_I^n\bm 1 \ge \paren{1 - C\tau}\bm 1,
\end{equation}
where $C>0$ depends on $v$ and $\alpha$.
Since the diagonal elements are positive and the off-diagonal elements are non-positive for sufficiently small $h$ and $\tau$, \eqref{eqn:kw-rowsum} implies that $K_I^n$ is an M-matrix and satisfies
\begin{equation}
    \norm{(K_I^n)^{-1}}_\infty \le 1 + C\tau.
\end{equation}

We now prove (2).
If $j(n)-1 \neq j(n-1)$, the entries of the matrix $T^n=R^n M_w^{n-1} (R^{n-1})^T$ are nonzero only along the diagonal and are all equal to $1$, yielding $\norm{T^n}_\infty=1$.
In the case $j(n)-1 = j(n-1)$, we have
\begin{align}
    \paren{M_w^{n-1}}_{j(n), j(n)-2} &= e^{-2\beta h}\frac{\frac{1}{2}-\sigma^{n-1}}{\sigma^{n-1}+\frac{1}{2}+\sigma^{n-1}\alpha^{n-1}h},\\
    \paren{M_w^{n-1}}_{j(n), j(n)-1} &= e^{-\beta h}\frac{2\sigma^{n-1} - \paren{1-\sigma^{n-1}} \alpha^{n-1} h }{ \frac{1}{2} + \sigma^{n-1} + \sigma^{n-1} \alpha^{n-1} h }.
\end{align}
For $\sigma^{n-1} \in[0,1/2]$ and sufficiently small $h$, if $\alpha^{n-1}\ge 0$, then $\paren{M_w^{n-1}}_{j(n), j(n)-2} \ge 0$, and we have
\begin{equation}\label{eqn:Mwrowjn}
\begin{aligned}
    &\abs{\paren{M_w^{n-1}}_{j(n), j(n)-2}} + \abs{\paren{M_w^{n-1}}_{j(n), j(n)-1}} \\
    &= \frac{ e^{-2\beta h}\paren{\frac{1}{2}-\sigma^{n-1}} + e^{-\beta h}\paren{ 2\sigma^{n-1} - \paren{1-\sigma^{n-1}} \alpha^{n-1} h }}{ \frac{1}{2} + \sigma^{n-1} + \sigma^{n-1} \alpha^{n-1} h } \\
    &\le 1 - \frac{\paren{\beta+\alpha^n}h + \mc O(h^2) }{ \frac{1}{2} + \sigma^{n-1} + \sigma^{n-1} \alpha^{n-1} h } \le 1.
\end{aligned}
\end{equation}
If $\alpha^{n-1}<0$, then it is possible that $\paren{M_w^{n-1}}_{j(n), j(n)-2} < 0$, in which case we have
\begin{equation}
\begin{aligned}
    &\abs{\paren{M_w^{n-1}}_{j(n), j(n)-2}} + \abs{\paren{M_w^{n-1}}_{j(n), j(n)-1}} \\
    &= \frac{ e^{-2\beta h}\paren{\frac{1}{2}-\sigma^{n-1}} - e^{-\beta h}\paren{ 2\sigma^{n-1} - \paren{1-\sigma^{n-1}} \alpha^{n-1} h }}{ \frac{1}{2} + \sigma^{n-1} + \sigma^{n-1} \alpha^{n-1} h } \\
     &\le \frac{ e^{-2\beta h}\paren{\frac{1}{2}-\sigma^{n-1}} + e^{-\beta h} \alpha^{n-1} h }{ \frac{1}{2} + \sigma^{n-1} + \sigma^{n-1} \alpha^{n-1} h } \\
     &= \frac{ \frac{1}{2}-\sigma^{n-1} + \paren{\alpha^{n-1} - (1-2\sigma^{n-1})\beta}h + \mc O(h^2) }{ \frac{1}{2} + \sigma^{n-1} + \sigma^{n-1} \alpha^{n-1} h }  \le 1.
\end{aligned}
\end{equation}
This implies that for $\sigma^{n-1} \in[0,1/2]$, we also have $\norm{T^n}_\infty \le 1$  and
\begin{equation}
    \norm{P^n}_\infty \le \norm{ R^{n+1} M_w^{n}  (R^{n})^T}_\infty \norm{(K_I^n)^{-1} }_\infty \le 1 + C\tau.
\end{equation}
If $\sigma^{n-1} \in (1/2,1)$, the only row whose absolute row sum can exceed
one is again the moving-interface row. For sufficiently small $h$, the second
entry in that row is positive, and therefore
\begin{equation}
\begin{aligned}
    \sum_k\abs{(T^n)_{j(n),k}}
    &\le
    \frac{ e^{-2\beta h}\paren{\sigma^{n-1}-\frac12}
    + e^{-\beta h}\abs{2\sigma^{n-1}-(1-\sigma^{n-1})\alpha^{n-1}h}}
    {\frac12+\sigma^{n-1}+\sigma^{n-1}\alpha^{n-1}h}  \\
    &=
    \frac{3\sigma^{n-1}-\frac12}{\sigma^{n-1}+\frac12}+\mc O(h)
    \le \frac53 .
\end{aligned}
\end{equation}
The last inequality is uniform for sufficiently small $h$. Hence $\norm{T^n}_\infty\le5/3$.

Now, we consider the matrix $P^n=R^{n+1} M_w^{n} (R^{n})^T(K_I^n)^{-1}$.
If $\sigma^{n}\in[0,1/2]$, the non-negativity and norm estimate follows immediately from the preceding row-sum bound and the M-matrix inverse bound.

For $\sigma^n \in (1/2,1)$, let
$\bm z=R^{n+1}M_w^n(R^n)^T\bm w$ and
$\bm w=(R^nK_w^n(R^n)^T)^{-1}\bm y$, where
\begin{equation}
    R^n K_w^n (R^n)^T \bm w = \bm y.
\end{equation}
Since $(K_I^n)^{-1}$ is non-negative and $R^{n+1} M_w^{n} (R^{n})^T$ is nonzero only along the diagonal and equals $1$ for rows from $1$ to $j(n)$, we only need to check the $(j(n)+1)$-th row of $P^n$. Here, $c_{j(n)+1}$ is a ghost-node value on the box $\mc B$, as in the interface formulation.
Since $c_j = w_j$ for $j \le j(n)$, the ghost value $c_{j(n)+1}$ satisfies
\begin{align}
    &e^{-\beta h}\paren{\sigma^n-\frac{1}{2}} w_{j(n)-1} + \paren{-2\sigma^n + \paren{1-\sigma^n}\alpha^n h}w_{j(n)} +e^{\beta h} \paren{\sigma^n + \frac{1}{2} + \sigma^n \alpha^n h} c_{j(n)+1} = 0,\\
    &- e^{-\beta h}\paren{\frac{\tau}{h^2}+\frac{v\tau}{2h}}w_{j(n)-1} + \paren{1+2\frac{\tau}{h^2}} w_{j(n)}  - e^{\beta h} \paren{\frac{\tau}{h^2} - \frac{v\tau}{2h}} c_{j(n)+1} = y_{j(n)}.
\end{align}
By eliminating $w_{j(n)-1}$, we obtain
\begin{align}
c_{j(n)+1} &= e^{-\beta h} \frac{\eta_1 y_{j(n)} + \eta_2 w_{j(n)}}{\eta_3}, \quad \eta_1 = \sigma^n-\frac{1}{2}, \\
    \eta_2 &= \frac{1}{2}-\sigma^n + \frac{\tau}{h^2} \paren{ 1-(1-\sigma^n)\alpha^nh+\frac{vh}{2}\Bigl(2\sigma^n-(1-\sigma^n)\alpha^nh\Bigr)},  \\
    \eta_3 &= \frac{\tau}{h^2}\paren{ 1+\sigma^n\alpha^nh + \frac{vh}{2} \sigma^n \paren{2+\alpha^nh}}.
\end{align}
For $\frac{\tau}{h^2}>\frac{1}{2}$ and sufficiently small $h$, since $\sigma^n\in (\frac{1}{2},1)$, we have
\begin{equation}
    \eta_1, \eta_3 > 0,\quad \eta_2 \ge \frac{1}{2}-\sigma^n + \frac{1}{2}>0.
\end{equation}
This implies the $(j(n)+1)$-th row of $P^n$ is also non-negative and
\begin{equation}
    \abs{c_{j(n)+1}} \le e^{-\beta h}\frac{\eta_1+\eta_2}{\eta_3}\max\{\abs{y_{j(n)}}, \abs{w_{j(n)}}\}.
\end{equation}
Since $e^{-\beta h} = 1 - \beta h + \mc O(h^2)$, by a direct calculation, we have
\begin{equation}
    e^{-\beta h} \frac{\eta_1+\eta_2}{\eta_3} = 1 - (\beta+\alpha^n)h + \mc O(h^2).
\end{equation}
Using the condition $\beta + \alpha^n \ge 1$, we have $e^{-\beta h} \frac{\eta_1+\eta_2}{\eta_3} \le 1 - h + \mc O(h^2) \le 1$ for sufficiently small $h$. This implies
\begin{equation}
    \abs{c_{j(n)+1}} \le \paren{1+C\tau}\norm{\bm y}_\infty,
\end{equation}
and so
\begin{equation}
    \norm{P^n}_\infty \le 1+C\tau.
\end{equation}
\end{proof}

\begin{theorem}\label{thm:mp-o2}
\textcolor{blue}{Suppose $R^0\bm c^0=0$, $q^n \ge 0$, $R^n\wt{\bm f}^n\ge 0$, and $\wt g^n\ge 0$.}
Then $R^n \bm c^n \ge 0$ for $n=0,1,\cdots, N_t$ and
\begin{equation}
    \textcolor{blue}{\norm{R^n\bm c^n}_\infty
    \le e^{CT+\beta L}\paren{
    T\sup_{1\le n\le N_t}\norm{R^n\wt{\bm f}^n}_\infty
    +\sup_{0\le n\le N_t}\abs{\wt g^n}
    +\sup_{1\le n\le N_t}\abs{q^n}}.}
\end{equation}
\end{theorem}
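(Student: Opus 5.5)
The plan is to work entirely with the transformed interior recursion \eqref{eqn:w-eqn}, written with the matrices of \eqref{eqn:transfer-matrices}. Set $\bm v^n := R^n\bm w^n$ and let $\bm r^n$ denote $R^nW^{-1}$ applied to the source bracket in \eqref{eqn:w-eqn}. Then \eqref{eqn:w-eqn} reads
\begin{equation*}
K_I^n\bm v^n=(1-\lambda\tau)\,T^n\bm v^{n-1}+(1-\lambda\tau)^n\bm r^n .
\end{equation*}

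\textbf{Positivity of the source.} First I check that $\bm r^n\ge 0$ under the hypotheses. The entries of $W^{-1}$ are positive, $\tau R^n\wt{\bm f}^n\ge0$, and $\tau\rho q^n e_1\ge0$ because $\rho>0$ for $h<2/\abs v$. For the term $\frac{1}{hQ^n}R^nE^n\wt g^n$, note that $R^nE^n=a_{n,2}(1-\frac{vh}{2})e_{j(n)}$ with $a_{n,2}>0$ and $Q^n>0$, so it is nonnegative. The remaining term is $\frac{h}{Q^{n-1}}R^nZ^{n-1}\wt g^{n-1}$. Its restriction is nonzero only when $j(n-1)=j(n)-1$, and then it equals $a_{n-1,2}e_{j(n)}\ge0$. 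The $e_{j(n)+1}$ component is cut off by $R^n$.

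\textbf{Propagating nonnegativity and the bound.} The awkward feature is that $T^n$ itself need not be nonnegative. Its moving-interface row can have a negative entry when $\alpha<0$, and its norm is only bounded by $5/3$. So I would not iterate $\bm v^n=(K_I^n)^{-1}[\cdots]$ directly. Instead I substitute $\bm u^n:=K_I^n\bm v^n$, which satisfies
\begin{equation*}
\bm u^n=(1-\lambda\tau)P^{n-1}\bm u^{n-1}+(1-\lambda\tau)^n\bm r^n ,
\end{equation*}
because $T^n(K_I^{n-1})^{-1}=P^{n-1}$ by definition. Lemma~\ref{lem:KM-mat-o2} gives $P^{n-1}\ge0$ and $\norm{P^{n-1}}_\infty\le1+C\tau$, which needs $\tau/h^2\ge1/2$; I will carry that as the regime of the lemma. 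Starting from $\bm u^0=0$ (from $R^0\bm c^0=0$), induction gives $\bm u^n\ge0$ and hence $\bm v^n=(K_I^n)^{-1}\bm u^n\ge0$, using the M-matrix property. Since $\bm c^n=(1-\lambda\tau)^{-n}W\bm w^n$ with positive factors, $R^n\bm c^n\ge0$. Norms telescope:
\begin{equation*}
\norm{\bm u^n}_\infty\le (1+C\tau)\norm{\bm u^{n-1}}_\infty+\norm{\bm r^n}_\infty\le e^{CT}\sum_{m\le n}\norm{\bm r^m}_\infty ,
\end{equation*}
and then $\norm{\bm v^n}_\infty\le(1+C\tau)\norm{\bm u^n}_\infty$. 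Here $1-\lambda\tau\le1$ when $\lambda\ge0$; otherwise $(1-\lambda\tau)^{\pm n}\le e^{CT}$, which I absorb. Converting back costs $\norm W_\infty\le e^{\beta L}$ and $\norm{W^{-1}}_\infty\le1$, and the factors $(1-\lambda\tau)^{\pm n}$ are bounded by $e^{CT}$.

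\textbf{Sizing each source term.} The main obstacle is to show that $\sum_m\norm{\bm r^m}_\infty$ is of the claimed size, i.e.\ that the boundary contributions are not $O(1/h)$ per step.
\begin{itemize}
\item The $\tau\wt{\bm f}$ terms sum to at most $T\sup\norm{R^n\wt{\bm f}^n}_\infty$.
\item The term $\tau\rho q^n$ is $O(\tau/h^2)$ per step and would not sum to something bounded. I would instead treat $\tau\rho q^n e_1$, and likewise the $\wt g$ terms, with a discrete barrier rather than summation: compare $\bm v^n$ with a constant vector $\Phi\bm1$. Using the row sums $K_I^n\bm1\ge(1-C\tau)\bm1$, strengthened to include the off-diagonal mass lost at row $1$ (namely $\tau\rho e^{-\beta h}$) and at row $j(n)$ (of size $\frac{2\tau}{h}\frac{\beta+\alpha\sigma(2-\sigma)}{2\sigma+1}\ge\frac{2\tau}{3h}$), the boundary sources are dominated by $\Phi$ times that lost mass.
\item The $\wt g^n$ source in row $j(n)$ is $\frac{\tau}{h}\frac{a_{n,2}}{Q^n}(1-\frac{vh}{2})\wt g^n$. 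The factor $(1-\sigma^n)$ cancels between $a_{n,2}$ and $Q^n$, leaving at most $\frac{\tau}{h}\cdot 2\abs{\wt g^n}$. This is dominated by $\Phi\cdot\frac{2\tau}{3h}$ once $\Phi\gtrsim3\sup\abs{\wt g}$.
\item The crossing term $\frac{h}{Q^{n-1}}a_{n-1,2}\wt g^{n-1}$ is $O(h)$, is bounded by $C\tau$ under the scheme's $\tau\sim h^2$-or-larger regime, and occurs only on crossing steps.
\end{itemize}
So I would run the induction on $\bm u^n$ with the sharpened row-sum bound. The barrier $\Phi=e^{CT}\bigl(T\sup\norm{\wt f}_\infty+\sup\abs{\wt g}+\sup\abs q\bigr)$ is preserved, which gives the stated estimate with the factor $e^{CT+\beta L}$.
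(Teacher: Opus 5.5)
Your route is the paper's. The recursion $\bm u^n=(1-\lambda\tau)P^{n-1}\bm u^{n-1}+(1-\lambda\tau)^n\bm r^n$ for $\bm u^n=K_I^n\bm v^n$ is the paper's induction on $T^{n+1}R^n\bm w^n=P^n\bm u^n$. The spatially constant barrier using the lost mass in rows $1$ and $j(n)$ of $K_I^n$ is the paper's supersolution $s^n\bm 1$.

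The step that fails is your last bullet. On a step with $j(n-1)=j(n)-1$, the crossing source in row $j(n)$ is $(1-\lambda\tau)^n e^{-\beta x_{j(n)}}\frac{h}{\theta^{n-1}}\wt g^{n-1}$, where $\theta^{n-1}=\sigma^{n-1}+\frac12+\sigma^{n-1}\alpha^{n-1}h\ge\frac12-Ch$. This is not $O(\tau)$:
\begin{itemize}
\item the CFL condition forces $h>\tau\max\abs{\dot\gamma}$;
\item at $\tau=\frac12h^2$ one has $h/\tau=2/h$.
\end{itemize}
So you cannot sum it like the $\tau\wt{\bm f}$ terms; a crude count over all steps gives $O(h/\tau)$, which is unbounded. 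Summing only over crossing steps requires the number of upward crossings to be $O(1/h)$. That is Lemma~\ref{lem:delta-sum}, which assumes monotone $\gamma$, and this theorem has no such hypothesis.

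The fix is to absorb this term in the barrier as well. The paper does this through the crossing row of $T^n$, whose weighted signed row sum is
\[
\frac{e^{-2\beta h}\bigl(\tfrac12-\sigma^{n-1}\bigr)+e^{-\beta h}\bigl(2\sigma^{n-1}-(1-\sigma^{n-1})\alpha^{n-1}h\bigr)}{\theta^{n-1}}=1-\frac{(\beta+\alpha^{n-1})h}{\theta^{n-1}}+O(h^2).
\]
Hence $-(1-\lambda\tau)T^n\Phi\bm 1$ leaves slack of about $\frac{(\beta+\alpha^{n-1})h}{\theta^{n-1}}\Phi\ge\frac{h}{\theta^{n-1}}\Phi(1-Ch)$ in row $j(n)$, using $\beta+\alpha\ge1$. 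This dominates the crossing source once $\Phi\ge(1+Ch)\sup\abs{\wt g}$.

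Alternatively, stay within your own mechanism. Since $\tau\ge\frac12h^2$ gives $h\le 2\tau/h$, the crossing source is at most $\frac{4\tau}{h}\abs{\wt g^{n-1}}(1+Ch)$. The crossing row sum of $T^n$ is below $1$ by the display above. So the row-$j(n)$ lost mass $\frac{2\tau}{3h}\Phi$ of $K_I^n$ absorbs this term together with the $\wt g^n$ term once $\Phi\ge 9(1+Ch)\sup\abs{\wt g}$.

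A smaller point: a time-independent $\Phi$ cannot absorb $\tau\wt{\bm f}$. In rows $2,\dots,j(n)-1$ one has $K_I^n\bm 1-(1-\lambda\tau)T^n\bm 1=O(\tau h^2)\bm 1$, of either sign, so there is no slack. You can either split off the $f$-part by linearity and use your summation estimate, or let the barrier grow, e.g.\ $\Phi^n=(1+C\tau)^n\bigl(n\tau\sup_m\norm{R^m\wt{\bm f}^m}_\infty+\sup\abs{\wt g}+\sup\abs{q}\bigr)$, which is what the paper's $s^n=n\tau\mathcal F+\mathcal G+\mathcal Q$ is meant to do.
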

\begin{proof}
Let $\bm w^n=(1-\lambda\tau)^nW^{-1}\bm c^n$, equivalently
$\bm c^n = \paren{1-\lambda\tau}^{-n}W\bm w^n$, as in
\eqref{eqn:weighted-transform}. Then $\bm w^n$ satisfies
\eqref{eqn:w-eqn}.
We will prove that $R^{n+1}M_w^n(R^n)^TR^n \bm w^n \ge 0$ for $n=1,2,\dots$.
We prove the statement by induction.
\textcolor{blue}{Since $R^0\bm c^0=0$, we have $R^0\bm w^0=0$ and hence
$T^1R^0\bm w^0=0$. This gives the first induction step.}

Suppose $T^n R^{n-1}\bm w^{n-1} \ge 0$. Using \eqref{eqn:w-eqn}, we have
\begin{equation}
\begin{aligned}
     R^n \bm w^n &=  (1-\lambda \tau)(K_I^n)^{-1}T^n R^{n-1} \bm w^{n-1} \\
     &+ (1-\lambda \tau)^{n} (K_I^n)^{-1}\tau  R^nW^{-1} \paren{ \wt{\bm f}^n + \rho q^n e_1+ \frac{1}{h Q^n}  E^n \wt g^n} \\
    &+ (1-\lambda \tau)^{n}(K_I^n)^{-1} \frac{h}{Q^{n-1}} R^n W^{-1}Z^{n-1} \wt g^{n-1}.
\end{aligned}
\end{equation}
and
\begin{equation}
\begin{aligned}
     R^{n+1}M_w^n(R^n)^TR^n \bm w^n &=  (1-\lambda \tau)P^nT^n R^{n-1} \bm w^{n-1} \\
     &+(1-\lambda \tau)^{n} P^n\tau  R^nW^{-1}  \paren{ \wt{\bm f}^n + \rho q^n e_1+ \frac{1}{h Q^n}  E^n \wt g^n} \\
    &+(1-\lambda \tau)^{n} P^n \frac{h}{Q^{n-1}} R^n W^{-1}Z^{n-1} \wt g^{n-1}.
\end{aligned}
\end{equation}
By Lemma~\ref{lem:KM-mat-o2}, $P^n$ has nonnegative entries. Also, for sufficiently small $h$, the vectors $\frac{1}{Q^n}R^n E^n$ and $\frac{1}{Q^{n-1}}R^nZ^{n-1}$ are nonnegative, so $R^n\bm w^n \ge 0$ and $R^{n+1}M_w^n(R^n)^TR^n \bm w^n \ge 0$.
Let
\begin{align}
    \textcolor{blue}{\mathcal{F} = \sup_{1\le n\le N_t}
    \norm{R^n\wt{\bm f}^n}_\infty,\quad
    \mathcal{G}=\sup_{0\le n\le N_t}\abs{\wt g^n},\quad
    \mathcal{Q} = \sup_{1\le n\le N_t}\abs{q^n}.}
\end{align}
We now compare $R^n\bm w^n$ with a scalar supersolution.

We introduce $\bm s^n = s^n\bm 1$ with
\begin{equation}
    \textcolor{blue}{s^n=n\tau\mathcal F+\mathcal G+\mathcal Q},
    \quad \lambda = \beta^2-v\beta.
\end{equation}
We have
\begin{equation}
\begin{aligned}
    &(1-\lambda \tau)^{-1} R^n K_w^n (R^n)^T R^n \bm s^n -  R^n M_w^{n-1} (R^{n-1})^T R^{n-1} \bm s^{n-1} \\
    &= \tau R^n \paren{ \mathcal{F}\bm 1 + \rho s^n e_1
    + \frac{1}{hQ^n}E^n  s^n (\beta+\alpha^n+\mc O(h))} \\
    &\quad + \frac{h}{Q^{n-1}}R^nW^{-1}Z^{n-1}
    s^n \paren{\beta+\alpha^{n-1}+\mc O(h)}.
\end{aligned}
\end{equation}
Let $R^n\bm e^n = R^n\bm s^n - R^n\bm w^n$. Then $R^0\bm e^0 = s^0\bm 1 \ge 0$ and
\begin{equation}
\begin{aligned}
    (1-\lambda\tau)^{n-1}W^{-1}\wt{\bm f}^n \le \mathcal{F},\quad
    (1-\lambda\tau)^{n-1}e^{-\beta h}q^n\le s^n,\quad
    (1-\lambda\tau)^{n-1}e^{-\beta x_{j(n)}}\wt g^n \le s^n.
\end{aligned}
\end{equation}
\textcolor{blue}{Thus,
$R^n\bm w^n\le T\mathcal F+\mathcal G+\mathcal Q$ component-wise.
Applying the same argument to $-R^n\bm w^n$ gives
$R^n\bm w^n\ge-T\mathcal F-\mathcal G-\mathcal Q$, and hence
$\norm{R^n\bm w^n}_\infty\le T\mathcal F+\mathcal G+\mathcal Q$.}
Using $(1-\lambda\tau)^{-n} \le  \paren{1+C\tau}^n \le e^{CT}$ proves the estimate.
\end{proof}

\begin{remark}
The lower bound $\tau/h^2\ge 1/2$ in Lemma~\ref{lem:KM-mat-o2} is a technical
assumption of the comparison proof, not an algorithmic restriction. Together
with $\tau<h/\max_{[0,T]}|\dot\gamma|$, it gives the proof range
\begin{equation}
\frac12 h^2\le \tau < \frac{h}{\max_{[0,T]}|\dot\gamma|}.
\end{equation}
This range is nonempty for bounded interface speed and sufficiently small $h$.
The lower bound is used only to prove the non-negativity of one row in the
product matrix. 
Similar mesh-dependent restrictions arise in Cartesian grid methods for
surface PDEs~\cite{Beale2020_SINUM}.
\end{remark}

\subsection{Error estimate}

Let $\bm \varepsilon_c^n=\bm C^n-\bm c^n$ and $\varepsilon_\psi^n=\psi(t^n)-\psi^n$ be the numerical errors. The local truncation error has the following form.
\begin{lemma}\label{lem:LTE}
For $n=1,2,\cdots,N_t$,
\begin{align}
    \paren{I + \frac{\tau}{h^2}A + \frac{v\tau}{2h}B} \bm \varepsilon_c^n - \frac{\tau}{h} E^n \varepsilon_\psi^n &= \bm \varepsilon_c^{n-1} + h  Z^{n-1}\varepsilon_\psi^{n-1} + \tau \sum_{k=1}^3 \bm r_k^n ,\\
    \frac{1}{h} (D^n)^T \bm \varepsilon_c^n +  Q^n\varepsilon_\psi^n &=  r_4^n.
\end{align}
where $\bm r_k^n$ for $k=1,2,3$ and $r_4^n$  satisfy
\begin{align}
\bm r_k^n = r_{k,j(n)}^n e_{j(n)} + r_{k,j(n)+1}^n e_{j(n)+1} , \quad k = 2,3,
\end{align}
and
\begin{align}
&r_{1,j}^n = \paren{ \frac{\tau}{2}\p_t^2 c + \frac{h^2}{6}\p_x^3 c - \frac{h^2}{12}\p_x^4 c } (x_j,t^n) + \mc O(\tau^2 + h^4),\\
&r_{2,j(n)}^n = \frac{1}{6} \p_x^3 \Corr(\gamma^n,t^n) \paren{1 -\frac{vh}{2}} \paren{1-\sigma^n}^3 h + \mc O(h^2) , \\
&r_{2,j(n)+1}^n = \frac{1}{6}\p_x^3 \Corr(\gamma^n,t^n)\paren{1 + \frac{vh}{2 }} \paren{\sigma^n}^3 h + \mc O(h^2),\\
&r_{3,j(n)}^n = \frac{1}{\tau}\delta_{j(n-1), j(n)-1}\paren{\frac{1}{6} \p_x^3 \Corr(\gamma^{n-1},t^{n-1}) \paren{1-\sigma^{n-1}}^3h^3 + \mc O\paren{ h^4 } }, \\
&r_{3,j(n)+1}^n = \frac{1}{\tau} \delta_{j(n-1),j(n)+1}\paren{\frac{1}{6}  \p_x^3 \Corr(\gamma^{n-1},t^{n-1}) \paren{\sigma^{n-1}}^3h^3  + \mc O\paren{ h^4 } },\\
&r_4^n = \paren{ \frac{\textcolor{blue}{1-3\paren{\sigma^n}^2}}{6} \p_x^3 c +\alpha^n\frac{\sigma^n\paren{1-\sigma^n}}{2} \p_{xx}c } h^2\\
&\quad\ - \paren{ \frac{\paren{1-\sigma^n}^3}{6}  \paren{\sigma^n+\frac{1}{2}+\sigma^n\alpha^nh}\p_x^3 \Corr} h^2 + \mc O(h^3).
\end{align}
\end{lemma}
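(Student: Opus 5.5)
We plan to obtain the error equations by substituting the exact nodal values $\bm C^n$ and the exact density $\psi(t^n)$ into the scheme~\eqref{eqn:scheme}. The residuals of the exact data will be identified as the $\bm r_k^n$ and $r_4^n$. Subtracting the numerical equations then gives the stated system by linearity, because the data terms $\tau\wt{\bm f}^n$, $\tau\rho q^n e_1$ and $\wt g^n$ appear identically in both. It therefore suffices to expand three kinds of residuals: at regular nodes, at the two nodes $x_{j(n)},x_{j(n)+1}$ adjacent to $\gamma^n$, and in the discrete Robin row.

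\emph{Regular nodes.} For $j\notin\{j(n),j(n)+1\}$, and with no crossing from the previous step, the stencil lies on one side of $\Gamma$. Standard Taylor expansion of backward Euler, the centered convection term and the three-point Laplacian at $(x_j,t^n)$ gives $r_{1,j}^n$. We will keep the leading $\tau$ and $h^2$ terms, using $c\in C^{4,2}$, and define $\bm r_1^n$ at every node as this smooth-part residual.

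\emph{Near-interface rows.} At $j=j(n)$ the stencil reaches $x_{j(n)+1}$, where the exact extended solution equals $c^+-\Corr$. The scheme adds back the discrete correction $\Corr_{j(n)+1}^n=a_{n,2}h\psi^n+b_{n,2}h^2f(\gamma^n,t^n)$, of which the $f$-part has already been moved into $\wt{\bm f}^n$. By the Taylor expansion of $\Corr$ at $(\gamma^n,t^n)$ derived above, together with $x_{j(n)+1}-\gamma^n=(1-\sigma^n)h$, the coefficients $a_{n,2},b_{n,2}$ reproduce exactly the linear and quadratic terms. The mismatch is therefore $\frac16\p_x^3\Corr\,(1-\sigma^n)^3h^3+\mc O(h^4)$. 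This mismatch is multiplied by $(\tau/h^2)(1-vh/2)$, which gives $\tau r_{2,j(n)}^n$. The argument at $j(n)+1$, with distance $-\sigma^nh$ and coefficient $a_{n,1}$, is symmetric and gives $r_{2,j(n)+1}^n$. When the interface has crossed a node between $t^{n-1}$ and $t^n$, the time-difference term $c_j^{n-1}$ at $j=j(n)$ or $j(n)+1$ sits on the other side. The correction $\Corr^{n-1}$ added on the right-hand side (through $Z^{n-1}$ and the $b_{n-1,\cdot}$ part of $\wt{\bm f}^n$) cancels it up to the same cubic remainder at time $t^{n-1}$. Dividing by $\tau$ to match the $\tau\bm r_3^n$ normalization gives $r_3^n$ with its $h^3/\tau$ size. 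One point needs care: $\p_x^3\Corr$ and the cross terms of the expansion in $t$ must be bounded, so we will use the Taylor expansion in $x$ only at the fixed time level. The time difference is handled through the smooth $c^\pm$, which is where $r_1$ absorbs $\frac{\tau}{2}\p_t^2c$.

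\emph{Robin row.} We will expand $c(x_{j(n)-1}),c(x_{j(n)})$ and $c^+(x_{j(n)+1})=c(x_{j(n)+1})+\Corr(x_{j(n)+1})$ about $\gamma^n$ up to third order. The weights in $D^n$ are designed so that the constant, first- and second-order terms reproduce $h(\p_xc+\alpha c)$, and the remainder is the displayed $h^2$ combination of $\p_x^3c$ and $\alpha\p_{xx}c$. Replacing $\Corr_{j(n)+1}$ by its discrete approximation produces the additional $-\frac{(1-\sigma^n)^3}{6}(\sigma^n+\frac12+\sigma^n\alpha^nh)\p_x^3\Corr\,h^2$ term; the $f$-part of this replacement is absorbed in $\wt g^n$. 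The main obstacle is this bookkeeping: verifying that the moment conditions for the weights in $D^n$ hold exactly at orders $0,1,2$ and computing the cubic coefficient $(1-3(\sigma^n)^2)/6$. We would check it symbolically in $\sigma^n$.
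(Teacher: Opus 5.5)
Your proposal is correct and follows essentially the same route as the paper. You substitute $\bm C^n$ and $\psi(t^n)$ into the scheme, and split the residual into three parts. These are the smooth ($c^\pm$-based) residual $\bm r_1^n$, the cubic mismatch between $\Corr$ and its quadratic truncation at $t^n$ ($\bm r_2^n$), and the same mismatch at a grid crossing from $t^{n-1}$ ($\bm r_3^n$). You then Taylor-expand the Robin row about $\gamma^n$ using $\Corr(\gamma^n,t^n)=0$.

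One correction to your bookkeeping. The weights in $D^n$ satisfy the moment conditions exactly only at orders $0$ and $1$, giving $\alpha^n c$ and $\p_x c$. At order $2$ only the $\alpha$-free part vanishes: the $\alpha^n h$ entries leave $\tfrac12\alpha^n\sigma^n(1-\sigma^n)h^2\p_{xx}c$, which is precisely the $\alpha^n\p_{xx}c$ term in $r_4^n$. So the exact order-$2$ moment condition you planned to verify would fail as stated. The cubic moment does give $(1-3(\sigma^n)^2)h^2/6$, as you expect.
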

\begin{proof}
Define
\begin{align}
    \delta \bm C^{n-1} &= \delta_{j(n-1), j(n)-1} \Corr(x_{j(n)}, t^{n-1})  e_{j(n)} - \delta_{j(n-1),j(n)+1}\Corr(x_{j(n)+1}, t^{n-1}) e_{j(n)+1},\\
    \delta \bm d^{n-1} &= \delta_{j(n-1), j(n)-1} d_{j(n)}^{n-1}  e_{j(n)} - \delta_{j(n-1),j(n)+1}d_{j(n)+1}^{n-1} e_{j(n)+1},\\
    \delta \bm c^{n-1} &= \delta_{j(n-1), j(n)-1} \Corr_{j(n)}^{n-1}  e_{j(n)} - \delta_{j(n-1),j(n)+1}\Corr_{j(n)+1}^{n-1} e_{j(n)+1},
\end{align}
where $d_j^n$ is the truncated expansion of the exact correction function, defined as
\begin{equation}
\begin{aligned}
    d^n_j = \paren{x_j-\gamma^n} \paren{ 1-\frac{1}{2} \alpha^n\paren{x_j-\gamma^n}}\psi(t^n) - \frac{1}{2}\paren{x_j-\gamma^n}^2 f(\gamma^n, t^n).
\end{aligned}
\end{equation}
We split the local truncation error into three terms:
\begin{equation}
\begin{aligned}
    & \frac{\bm C^n - \paren{\bm C^{n-1} + h Z^{n-1} \psi(t^{n-1})} }{\tau} + \paren{\frac{1}{h^2}A + \frac{v}{2h}B} \bm C^n - \frac{1}{h}E^n \psi(t^n) -\wt{\bm f}^n\\
    &= \bm r_1^n + \bm r_2^n + \bm r_3^n
\end{aligned}
\end{equation}
where
\begin{align}
\bm r_1^n & = \frac{\bm C^n - \paren{\bm C^{n-1} + \delta \bm C^{n-1} } }{\tau} + \paren{\frac{1}{h^2}A + \frac{v}{2h}B} \bm C^n - \bm f^n \nonumber\\
& - \paren{\frac{1}{h^2} -\frac{v}{2h}}\Corr(x_{j(n)+1}, t^n) e_{j(n)}  + \paren{\frac{1}{h^2}+\frac{v}{2h}} \Corr(x_{j(n)}, t^n) e_{j(n)+1},\\
\bm r_2^n &= \paren{\frac{1}{h^2} -\frac{v}{2h}}\paren{\Corr(x_{j(n)+1}, t^n) - d_{j(n)+1}^n} e_{j(n)}\nonumber \\
&- \paren{\frac{1}{h^2} + \frac{v}{2h}} \paren{ \Corr(x_{j(n)}, t^n) - d_{j(n)}^n} e_{j(n)+1} = r_{2,j(n)}^n e_{j(n)} + r_{2,j(n)+1}^n e_{j(n)+1} ,\\
\bm r_3^n &= \textcolor{blue}{\frac{\delta \bm C^{n-1} - \delta \bm d^{n-1}}{\tau}} = r_{3,j(n)}^n e_{j(n)} + r_{3,j(n)+1}^n e_{j(n)+1} .
\end{align}
The boundary-condition residual is
\begin{align}
    r_4^n &
    = \frac{1}{h} (D^n)^T \bm C^n + \frac{D^n_{j(n)+1}}{h} \Corr(x_{j(n)+1},t^n) - \textcolor{blue}{g^n} + \frac{1}{h}D^n_{j(n)+1}\paren{ d_{j(n)+1}^n - \Corr(x_{j(n)+1},t^n) }.
\end{align}
At regular nodes the stencil does not cross $\gamma^n$. Expanding
$c(x_j,t^{n-1})$ about $(x_j,t^n)$ and $c(x_{j\pm1},t^n)$ about
$(x_j,t^n)$ gives
\begin{equation}
\begin{aligned}
    &\frac{c(x_j,t^n)-c(x_j,t^{n-1})}{\tau}
    +v\frac{c(x_{j+1},t^n)-c(x_{j-1},t^n)}{2h}\\
    &\quad-\frac{c(x_{j+1},t^n)-2c(x_j,t^n)+c(x_{j-1},t^n)}{h^2}
    -f(x_j,t^n),
\end{aligned}
\end{equation}
which yields the stated formula for $r_{1,j}^n$ after using the PDE.
At the two irregular nodes, the exact correction has the Taylor expansion
\begin{equation}
\begin{aligned}
    \Corr(x_{j(n)+1},t^n)-d_{j(n)+1}^n
    &=\frac16 \p_x^3\Corr(\gamma^n,t^n)(1-\sigma^n)^3h^3+\mc O(h^4),\\
    \Corr(x_{j(n)},t^n)-d_{j(n)}^n
    &=-\frac16 \p_x^3\Corr(\gamma^n,t^n)(\sigma^n)^3h^3+\mc O(h^4).
\end{aligned}
\end{equation}
Substitution into the two correction terms in $\bm r_2^n$ gives the stated
$r_{2,j(n)}^n$ and $r_{2,j(n)+1}^n$. The vector $\bm r_3^n$ is present only
when the interface crosses a grid node between $t^{n-1}$ and $t^n$; the same
Taylor expansion at time $t^{n-1}$ gives the two listed crossing residuals.
Finally, expanding $c(x_{j(n)-1},t^n)$, $c(x_{j(n)},t^n)$, and
$c(x_{j(n)+1},t^n)+\Corr(x_{j(n)+1},t^n)$ about $\gamma^n$ in the discrete
Robin equation~\eqref{eqn:bc-discrete}, using
$c_x+\alpha c=g$ and $\Corr(\gamma^n,t^n)=0$, gives the displayed expression
for $r_4^n$. The remainders are uniform by the assumed regularity of $c$ and
$\Corr$ in the narrow band.
\end{proof}

Using the same reduction as in Proposition~\ref{prop:red-scheme}, we obtain the reduced error equation.
\begin{corollary}\label{cor:red-LTE}
For $n=1,2,\cdots, N_t$, the error $\bm\varepsilon_c^n$ satisfies
\begin{equation}\label{eqn:err-eqn}
\begin{aligned}
    & R^n K^n (R^n)^T R^n \bm\varepsilon_c^n - R^n M^{n-1}  (R^{n-1})^T R^{n-1} \bm \varepsilon_c^{n-1} \\
    &= R^n \tau\paren{ \sum_{k=1}^3 \bm r_k^n+ \frac{1}{hQ^n} E^n  r_4^n} + \frac{h}{Q^{n-1}} R^n  Z^{n-1} r_4^{n-1}.
\end{aligned}
\end{equation}
\end{corollary}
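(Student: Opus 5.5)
The error system in Lemma~\ref{lem:LTE} has exactly the structure of the scheme~\eqref{eqn:scheme}: the same left-hand-side matrices, $\bm c^n$ replaced by $\bm\varepsilon_c^n$ and $\psi^n$ by $\varepsilon_\psi^n$, with source $\tau\sum_{k=1}^3\bm r_k^n$ in place of $\tau(\wt{\bm f}^n+\rho q^n e_1)+\bm c^{n-1}$-type terms, and $r_4^n$ in place of $\wt g^n$. So I would simply repeat the elimination in the proof of Proposition~\ref{prop:red-scheme} verbatim. The boundary rows give
\begin{equation*}
\varepsilon_\psi^m=-\frac{1}{hQ^m}(D^m)^T\bm\varepsilon_c^m+\frac{r_4^m}{Q^m},\qquad m=n,n-1,
\end{equation*}
which is legitimate because $Q^m>0$ uniformly for small $h$. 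I would first justify that $\varepsilon_\psi^0=0$ and $\bm\varepsilon_c^0=0$ on the interior (with $r_4^0$ defined consistently), so that the $m=n-1$ elimination also makes sense at $n=1$.

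Substituting both expressions into the first error equation is the main step. The term $-\frac{\tau}{h}E^n\varepsilon_\psi^n$ produces $\frac{\tau}{h^2Q^n}E^n(D^n)^T\bm\varepsilon_c^n$, which combines with $I+\frac{\tau}{h^2}A+\frac{v\tau}{2h}B$ to give $K^n\bm\varepsilon_c^n$. On the right, it leaves the source $\frac{\tau}{hQ^n}E^nr_4^n$. Likewise, $hZ^{n-1}\varepsilon_\psi^{n-1}$ produces $-\frac{1}{Q^{n-1}}Z^{n-1}(D^{n-1})^T\bm\varepsilon_c^{n-1}$, which joins $\bm\varepsilon_c^{n-1}$ to form $M^{n-1}\bm\varepsilon_c^{n-1}$, and it leaves the source $\frac{h}{Q^{n-1}}Z^{n-1}r_4^{n-1}$. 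The result is the full-grid error equation
\begin{equation*}
K^n\bm\varepsilon_c^n=M^{n-1}\bm\varepsilon_c^{n-1}+\tau\Bigl(\sum_{k=1}^3\bm r_k^n+\frac{1}{hQ^n}E^nr_4^n\Bigr)+\frac{h}{Q^{n-1}}Z^{n-1}r_4^{n-1}.
\end{equation*}

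Finally, I would apply $R^n$. By the computation $\wt A_{j(n),j(n)+1}=0$ in Proposition~\ref{prop:solvability}, $K^n$ is tridiagonal with zero top-right block. Hence $R^nK^n\bm\varepsilon_c^n=R^nK^n(R^n)^TR^n\bm\varepsilon_c^n$.

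The only point requiring care, and the main obstacle, is the analogous identity $R^nM^{n-1}\bm\varepsilon_c^{n-1}=R^nM^{n-1}(R^{n-1})^TR^{n-1}\bm\varepsilon_c^{n-1}$. Here I would check that rows $1,\dots,j(n)$ of $M^{n-1}$ have no nonzero entries in columns beyond $j(n-1)$, considering the three cases $j(n)-j(n-1)\in\{-1,0,1\}$ allowed by the CFL condition.
\begin{itemize}
\item In the cases $0$ and $-1$, we have $R^nZ^{n-1}=0$, so the restricted rows of $M^{n-1}$ are identity rows. For $j(n)\le j(n-1)$ these involve only columns $\le j(n)\le j(n-1)$.
\item In the case $+1$, $Z^{n-1}=a_{n-1,2}e_{j(n)}$. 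Row $j(n)$ of $M^{n-1}$ is then $e_{j(n)}^T-\frac{a_{n-1,2}}{Q^{n-1}}(D^{n-1})^T$. Since $a_{n-1,2}/Q^{n-1}=1/D^{n-1}_{j(n-1)+1}$, the diagonal entry cancels: the coefficient at column $j(n)=j(n-1)+1$ is $1-1=0$. The remaining entries lie in columns $j(n-1)-1$ and $j(n-1)$, which are interior at time $t^{n-1}$.
\end{itemize}
This is the same cancellation described in the Remark after Proposition~\ref{prop:red-scheme}. With it, restriction yields~\eqref{eqn:err-eqn}.
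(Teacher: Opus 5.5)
Your proposal is correct and is essentially the paper's own argument: the paper gets the corollary by repeating the elimination of Proposition~\ref{prop:red-scheme} on the error system of Lemma~\ref{lem:LTE}. Your case-by-case check that rows $1,\dots,j(n)$ of $M^{n-1}$ vanish beyond column $j(n-1)$ also spells out a step the paper leaves to the Remark after that proposition.

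One small adjustment: at $n=1$ you do not need $\varepsilon_\psi^0=0$, which need not hold. It suffices that the boundary error row also holds at $m=0$, which is the case when $\psi^0$ is defined by the discrete Robin equation at $t^0$, as the paper implicitly assumes, with $r_4^0$ given by the same formula as in Lemma~\ref{lem:LTE}.
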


Since $R^n\bm r_2^n=r_{2,j(n)}^n e_{j(n)}$ and $R^nE^n=E_{j(n)}^n e_{j(n)}$, the interface residual and the boundary residual can be factored through the same active-grid vector. Thus we can rewrite the error equation as
\begin{equation}\label{eq:consistency}
\begin{aligned}
    & R^n K^n (R^n)^T R^n \bm\varepsilon_c^n - R^n M^{n-1}  (R^{n-1})^T R^{n-1} \bm \varepsilon_c^{n-1}     \\
    &= \tau R^n \bm r_1^n + \tau R^n\paren{ \bm r_2^n+ \frac{1}{h Q^n} E^n  r_4^n} + \tau R^n \bm r_3^n  +  \frac{h}{Q^{n-1}} R^n Z^{n-1} r_4^{n-1} \\
     &= \underbrace{\tau R^n \bm r_1^n}_{\mc O(\tau\paren{\tau+h^2})} + \underbrace{\paren{\frac{h Q^n  r_{2,j(n)}^n}{E_{j(n)}^n} + r_4^n} \frac{ \tau}{h Q^n}R^nE^n}_{\mc O(\tau h)} + \underbrace{\tau R^n \bm r_3^n  +  \frac{h}{Q^{n-1}} R^n Z^{n-1} r_4^{n-1}}_{\mc O(h^3)}.
\end{aligned}
\end{equation}
That the middle term has order $\mc{O}(\tau h)$ can be seen as follows. For sufficiently small $h$, $Q^n$ and $E_{j(n)}^n$ are bounded away from zero and $R^nE^n/Q^n$ is uniformly bounded. Since $r_{2,j(n)}^n=\mc{O}(h)$ and $r_4^n=O(h^2)$, the bracketed scalar is $\mc{O}(h^2)$ and the displayed vector has $\ell^\infty$ size of $\mc{O}(\tau h)$. 

Since the number of time steps is of order $1/\tau$, the above consistency estimate implies that the error $\varepsilon_c^n$ should scale like
\begin{equation}
\varepsilon_c^n =\mc{O}(\tau+h^2)+\mc{O}(h)+\mc{O}(h^3/\tau).
\end{equation}
This is considerably worse than the computationally observed error rate of $\mc{O}(\tau+h^2)$ (see Section \ref{sect:1Dconv}). We now establish two lemmas that help resolve this issue. 

\textcolor{blue}{The $\mc{O}(h^3)$ term in the last line of} \eqref{eq:consistency}
\textcolor{blue}{arises only when the moving boundary crosses a grid point. This can be quantified with the help of the following moving-interface summation bound.}

\begin{lemma}\label{lem:delta-sum}
\textcolor{blue}{Assume that either $\dot\gamma\ge 0$ on $[0,T]$ or
$\dot\gamma\le 0$ on $[0,T]$. Then, for $n=1,2,\cdots,N_t$,}
\begin{equation}
    \textcolor{blue}{\sum_{\ell=1}^n\delta_{j(\ell-1), j(\ell)-1}
    \le 1+\frac{1}{h}\int_0^{t^n} \abs{\dot{\gamma}(s)}\,ds.}
\end{equation}
\end{lemma}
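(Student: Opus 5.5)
The indicator $\delta_{j(\ell-1),j(\ell)-1}$ equals one exactly when $j(\ell)=j(\ell-1)+1$, that is, when the interface passes a grid node in the positive direction during $(t^{\ell-1},t^\ell]$. The plan is to bound the number of such upward jumps of the integer sequence $j(\ell)=\lfloor\gamma(t^\ell)/h\rfloor$ by its total increase, and then to bound that increase by the displacement of $\gamma$.

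First, by the CFL condition, $j(\ell)-j(\ell-1)\in\{-1,0,1\}$ for every $\ell$. Hence $\delta_{j(\ell-1),j(\ell)-1}=\max\{j(\ell)-j(\ell-1),0\}$, the positive part of the increment.

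Case $\dot\gamma\ge0$. Here $\gamma$ is nondecreasing, so $j$ is nondecreasing and every increment lies in $\{0,1\}$. The sum therefore telescopes:
\begin{equation}
\sum_{\ell=1}^n\delta_{j(\ell-1),j(\ell)-1}=j(n)-j(0).
\end{equation}
Using $\gamma/h-1<\lfloor\gamma/h\rfloor\le\gamma/h$, we get
\begin{equation}
j(n)-j(0)<\frac{\gamma(t^n)-\gamma(0)}{h}+1=1+\frac1h\int_0^{t^n}\dot\gamma(s)\,ds,
\end{equation}
and $\dot\gamma=\abs{\dot\gamma}$ in this case, which is the claimed bound.

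Case $\dot\gamma\le0$. Here $\gamma$ is nonincreasing, so $j(\ell)\le j(\ell-1)$ for every $\ell$. Every indicator vanishes, the sum is $0$, and the bound $1+\cdots$ holds trivially.

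The only delicate point is the floor-function slack: it is what produces the additive constant $1$ rather than a sharper bound. The monotonicity hypothesis is what lets the positive parts telescope. Without it, $\gamma$ could oscillate about a grid node and generate crossings that the net displacement does not control; the total variation $\int\abs{\dot\gamma}$ would still control them, but with a slightly different constant. Under the stated hypothesis, no further obstacle is expected.
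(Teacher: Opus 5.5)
Your proof is correct and follows the paper's argument: the sum vanishes when $\dot\gamma\le 0$, and when $\dot\gamma\ge 0$ the CFL condition forces increments in $\{0,1\}$, so the sum telescopes to $j(n)-j(0)$, which the floor-function slack bounds by $1+(\gamma(t^n)-\gamma(0))/h$. One caveat concerns only your closing aside. Without monotonicity, the total variation alone does not bound the number of crossings, because tight oscillations of $\gamma$ about a grid node can produce many crossings with little variation; this is why the paper's follow-up remark assumes finitely many direction changes, at the cost of an additive constant per monotone piece.
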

\begin{proof}
\textcolor{blue}{If $\dot\gamma\le0$, the sum is zero. If $\dot\gamma\ge0$,
then $j(\ell)-j(\ell-1)\in\{0,1\}$ and hence}
\begin{equation}
    \textcolor{blue}{\sum_{\ell=1}^n\delta_{j(\ell-1),j(\ell)-1}
    =j(n)-j(0)
    \le 1+\frac{\gamma(t^n)-\gamma(0)}{h}
    =1+\frac{1}{h}\int_0^{t^n}\abs{\dot\gamma(s)}\,ds.}
\end{equation}
\end{proof}

The middle term of $\mc{O}(\tau h)$ in the last line of \eqref{eq:consistency} is localized near the moving boundary. We construct a modified solution that cancels this term while remaining $\mc{O}(h^2)$ close to the original numerical solution.

\begin{lemma}\label{lem:mod-LTE}
There exists a grid function $R^n\wh{\bm c}^n$, with
$R^0\wh{\bm c}^0=0$, such that
\begin{equation}
    \norm{R^n\wh{\bm c}^n}_\infty
    \le Ce^{CT+\beta L}h^2 .
\end{equation}
Defining the modified error by
$R^n\wt{\bm\varepsilon}_c^n=R^n\bm\varepsilon_c^n-R^n\wh{\bm c}^n$, the modified error satisfies
\begin{align}
    & R^n K^n (R^n)^T R^n \wt{\bm\varepsilon}_c^n - R^n M^{n-1}  (R^{n-1})^T R^{n-1} \wt{\bm\varepsilon}_c^{n-1} = \tau R^n\wt{\bm r}_1^n + R^n\wt{\bm r}_2^n,
\end{align}
and
\begin{align}\label{eqn:wt_LTE_bound}
    \norm{R^n\wt{\bm r}_1^n}_\infty \le C\paren{\tau +h^2},\quad
    \norm{R^n\wt{\bm r}_2^n}_\infty
    \le \textcolor{blue}{C\delta_{j(n-1),j(n)-1}h^3}.
\end{align}
\end{lemma}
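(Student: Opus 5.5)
The plan is to absorb the $\mc O(\tau h)$ middle term of \eqref{eq:consistency} into an auxiliary discrete solution. That term has exactly the structure of the boundary-data forcing in Proposition~\ref{prop:red-scheme}, so it can be generated by the reduced scheme with zero bulk data and suitable ``boundary data''. Set
\begin{equation*}
\mu^n=\frac{hQ^n r_{2,j(n)}^n}{E^n_{j(n)}}+r_4^n ,\qquad n=1,\dots,N_t,
\end{equation*}
with $\mu^0=0$. Define $R^n\wh{\bm c}^n$ as the solution of \eqref{eqn:Rn-un-eqn} with $\wt{\bm f}^n=0$, $q^n=0$, $\wt g^n=\mu^n$, and initial value $R^0\wh{\bm c}^0=0$. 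By Proposition~\ref{prop:red-scheme}, this means
\begin{equation*}
R^nK^n(R^n)^TR^n\wh{\bm c}^n-R^nM^{n-1}(R^{n-1})^TR^{n-1}\wh{\bm c}^{n-1}
=\frac{\tau}{hQ^n}R^nE^n\mu^n+\frac{h}{Q^{n-1}}R^nZ^{n-1}\mu^{n-1}.
\end{equation*}
Solvability follows from Lemma~\ref{lem:KM-mat-o2}(1), since $K_I^n$ is an M-matrix.

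The first step is to show $\sup_n|\mu^n|\le Ch^2$ uniformly in $\sigma^n$. The only danger is the factor $(1-\sigma^n)$. From the formulas, $E^n_{j(n)}=a_{n,2}(1-\frac{vh}{2})$ and $Q^n=(\sigma^n+\frac12+\sigma^n\alpha^nh)a_{n,2}$, so the ratio $Q^n/E^n_{j(n)}$ is bounded uniformly and the factor $a_{n,2}$ cancels. By Lemma~\ref{lem:LTE}, $r_{2,j(n)}^n=\mc O(h)$ and $r_4^n=\mc O(h^2)$, with constants depending only on bounds of $\p_x^3\Corr$, $\p_x^3c$ and $\p_{xx}c$. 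Hence $|\mu^n|\le Ch^2$.

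Next I apply Theorem~\ref{thm:mp-o2}. Its positivity hypotheses are not met for sign-indefinite $\mu^n$, but the scheme is linear, so I split $\mu^n=(\mu^n)^+-(\mu^n)^-$ and apply the theorem to each part, both of which have $\wt{\bm f}=0$, $q=0$ and nonnegative $\wt g$. The two resulting bounds give
\begin{equation*}
\norm{R^n\wh{\bm c}^n}_\infty\le 2e^{CT+\beta L}\sup_n|\mu^n|\le Ce^{CT+\beta L}h^2 .
\end{equation*}

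Finally, I subtract the equation for $\wh{\bm c}$ from the error equation \eqref{eq:consistency}. The middle term is $\mu^n\frac{\tau}{hQ^n}R^nE^n$ and cancels exactly. What remains is $\tau R^n\wt{\bm r}_1^n$ with $\wt{\bm r}_1^n=\bm r_1^n$, which is $\mc O(\tau+h^2)$ by Lemma~\ref{lem:LTE}, together with
\begin{equation*}
R^n\wt{\bm r}_2^n=\tau R^n\bm r_3^n+\frac{h}{Q^{n-1}}R^nZ^{n-1}\bigl(r_4^{n-1}-\mu^{n-1}\bigr).
\end{equation*}
The restriction $R^n$ removes index $j(n)+1$. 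Therefore only the components proportional to $\delta_{j(n-1),j(n)-1}$ survive in both $\bm r_3^n$ and $Z^{n-1}$. For these, $\tau r_{3,j(n)}^n=\mc O(h^3)\,\delta_{j(n-1),j(n)-1}$. Also $(R^nZ^{n-1})_{j(n)}/Q^{n-1}=a_{n-1,2}/Q^{n-1}$ is bounded, again because the factor $1-\sigma^{n-1}$ cancels, and it multiplies $h\,\mc O(h^2)$. This gives \eqref{eqn:wt_LTE_bound}.

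I expect the main obstacle to be the uniformity of these cancellations as $\sigma\to1$, where $Q^n$ and $E^n_{j(n)}$ both degenerate. The argument must rely only on the ratios $Q^n/E^n_{j(n)}$ and $a_{n-1,2}/Q^{n-1}$, and never bound $1/Q$ alone. A secondary point is checking that the stability estimate of Theorem~\ref{thm:mp-o2} remains valid for the data-splitting argument with $\wt{\bm f}=q=0$.
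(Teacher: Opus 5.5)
Your proposal is correct and is essentially the paper's proof. You define $R^n\wh{\bm c}^n$ by the reduced scheme driven by the concentrated $\mc O(\tau h)$ term with data $\mu^n$ (the paper's $\wh g^n$), bound it through Theorem~\ref{thm:mp-o2} using $|\mu^n|\le Ch^2$, and leave $\tau R^n\bm r_1^n$ plus a crossing residual of size $C\delta_{j(n-1),j(n)-1}h^3$.

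Your departures are refinements, not a different route:
\begin{itemize}
\item You also include the forcing $\frac{h}{Q^{n-1}}R^nZ^{n-1}\mu^{n-1}$, so the auxiliary problem is literally an instance of \eqref{eqn:Rn-un-eqn}. The paper omits it, so its $\wt{\bm r}_2^n$ carries $r_4^{n-1}$ rather than your $r_4^{n-1}-\mu^{n-1}$; both are $\mc O(h^2)$.
\item You split $\mu^n$ into positive and negative parts, which is a more careful version of the paper's ``apply to $\pm\wh{\bm c}^n$''.
\item You bound only the ratios $Q^n/E^n_{j(n)}$ and $a_{n-1,2}/Q^{n-1}$, uniformly as $\sigma^n\to1$, which is the right way to avoid the degeneracy of $Q^n$.
\end{itemize}
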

\begin{proof}
Define $R^n\wh{\bm c}^n$ as the solution of
\begin{equation}
R^n K^n (R^n)^T R^n \wh{\bm c}^n - R^n M^{n-1}  (R^{n-1})^T R^{n-1} \wh{\bm c}^{n-1} =  \frac{ \tau}{h Q^n}R^nE^n \wh g^n,
\end{equation}
where $\wh g^n$ is given by
\begin{equation}
    \wh g^n = \paren{\frac{h Q^n  r_{2,j(n)}^n}{E_{j(n)}^n} + r_4^n} .
\end{equation}
The definition of $\wh g^n$ cancels the concentrated term
$\tau R^n(\bm r_2^n+E^nr_4^n/(hQ^n))$ in
Corollary~\ref{cor:red-LTE}.
\textcolor{blue}{Set $R^n\wt{\bm r}_1^n=R^n\bm r_1^n$ and
$R^n\wt{\bm r}_2^n=\tau R^n\bm r_3^n+
hR^nZ^{n-1}r_4^{n-1}/Q^{n-1}$. Write}
\begin{align*}
&\textcolor{blue}{\theta^{n-1}
=\sigma^{n-1}+\frac12+\sigma^{n-1}\alpha^{n-1}h,}\\
&\textcolor{blue}{G^{n-1}
=\frac{1-3\paren{\sigma^{n-1}}^2}{6}
\p_x^3c(\gamma^{n-1},t^{n-1})
+\alpha^{n-1}\frac{\sigma^{n-1}\paren{1-\sigma^{n-1}}}{2}
\p_{xx}c(\gamma^{n-1},t^{n-1}).}
\end{align*}
\textcolor{blue}{Then
$R^nZ^{n-1}/Q^{n-1}
=\delta_{j(n-1),j(n)-1}e_{j(n)}/\theta^{n-1}$.}
\textcolor{blue}{The formulas for $r_3^n$ and $r_4^{n-1}$ give remainders
$\eta_3^n$ and $\eta_4^{n-1}$ satisfying
$\abs{\eta_3^n}\le Ch^4$ and $\abs{\eta_4^{n-1}}\le Ch^3$, such that}
\begin{align*}
&\textcolor{blue}{R^n\wt{\bm r}_2^n
=\delta_{j(n-1),j(n)-1}\left[
\frac16\p_x^3\Corr(\gamma^{n-1},t^{n-1})
\paren{1-\sigma^{n-1}}^3h^3+\eta_3^n\right.}\\
&\textcolor{blue}{\left.\hspace{2.8cm}
+\frac{h}{\theta^{n-1}}\left(
G^{n-1}h^2
-\frac16\paren{1-\sigma^{n-1}}^3\theta^{n-1}
\p_x^3\Corr(\gamma^{n-1},t^{n-1})h^2
+\eta_4^{n-1}\right)\right]e_{j(n)}}\\
&\textcolor{blue}{\hspace{1.8cm}
=\delta_{j(n-1),j(n)-1}\left[
\frac{h^3}{\theta^{n-1}}G^{n-1}
+\eta_3^n+\frac{h}{\theta^{n-1}}\eta_4^{n-1}\right]e_{j(n)}.}
\end{align*}
\textcolor{blue}{Since $\theta^{n-1}$ is bounded away from zero for
sufficiently small $h$,}
\begin{equation*}
\textcolor{blue}{\norm{R^n\wt{\bm r}_2^n}_\infty
\le C\delta_{j(n-1),j(n)-1}h^3.}
\end{equation*}
\textcolor{blue}{This proves the second bound in}
\eqref{eqn:wt_LTE_bound}\textcolor{blue}{; the first follows directly from
Lemma~\ref{lem:LTE}.}
Since $\abs{\wh g^n} \le Ch^2$, applying the discrete maximum principle in Theorem~\ref{thm:mp-o2} to $\pm\wh{\bm c}^n$ gives
\begin{equation}
    \norm{R^n\wh{\bm c}^n}_\infty \le e^{CT+\beta L}\sup_{0\le n\le N_t}\abs{\wh g^n} \le Ce^{CT+\beta L} h^2.
\end{equation}

\end{proof}

We may now prove our error estimate.

\begin{theorem}\label{thm:err-est-o2}
Consider the one-dimensional problem~\eqref{eqn:prob-1d} and the scheme
\eqref{eqn:scheme}. \textcolor{blue}{Let $c$ and the correction function
$\Corr$ have the regularity used in Lemma}~\ref{lem:LTE}\textcolor{blue}{,
and let $\dot\gamma$, $\alpha$, and the data be uniformly bounded on
$[0,T]$. Assume that either $\dot\gamma\ge 0$ on $[0,T]$ or
$\dot\gamma\le 0$ on $[0,T]$, and that the compatibility conditions needed
for the continuous maximum principle in Theorem~\ref{thm:max_principle} hold after
the exponential transform when $\alpha$ is not positive. Initialize the scheme
exactly, $R^0\bm c^0=R^0\bm C^0$, and let the mesh parameters satisfy}
\begin{equation}
    \frac{1}{2}h^2 \le \tau < \frac{h}{\max_{[0,T]}\abs{\dot{\gamma}(t)}} .
\end{equation}
Then there exists a constant $h_0$ depending on $\gamma$, $v$, and $\alpha$
such that for all $h\le h_0$, the numerical solution
$R^n\bm c^n$ satisfies
\begin{equation}\label{eqn:err-est-o2}
\begin{aligned}
    \sup_{0\le n\le N_t}\norm{R^n\bm\varepsilon_c^n}_\infty  \le  C e^{CT+\beta L} \paren{\tau +h^2}.
\end{aligned}
\end{equation}
This theorem applies only to the one-dimensional scheme analyzed in this section.
\end{theorem}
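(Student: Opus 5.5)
We split the error as $R^n\bm\varepsilon_c^n=R^n\wh{\bm c}^n+R^n\wt{\bm\varepsilon}_c^n$ using Lemma~\ref{lem:mod-LTE}. Since $R^0\bm c^0=R^0\bm C^0$ and $R^0\wh{\bm c}^0=0$, the modified error starts from zero. The first piece already satisfies $\norm{R^n\wh{\bm c}^n}_\infty\le Ce^{CT+\beta L}h^2$. So it remains to bound $R^n\wt{\bm\varepsilon}_c^n$. This error solves the reduced recursion with source $\tau R^n\wt{\bm r}_1^n+R^n\wt{\bm r}_2^n$. Here $\norm{R^n\wt{\bm r}_1^n}_\infty\le C(\tau+h^2)$, while the $\wt{\bm r}_2$ term is $O(h^3)$ and appears only at crossing steps.

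To bound $\wt{\bm\varepsilon}_c$ we will not invoke Theorem~\ref{thm:mp-o2} as a black box, because the crossing source is not of the form $\tau\wt{\bm f}^n$ with bounded $\wt{\bm f}$. Instead we redo its comparison argument with a time-accumulating supersolution. In the weighted variables $\bm w^n=(1-\lambda\tau)^nW^{-1}\wt{\bm\varepsilon}_c^n$ the recursion reads
\[
R^n\bm w^n=(1-\lambda\tau)(K_I^n)^{-1}T^nR^{n-1}\bm w^{n-1}+(\text{sources}).
\]
Lemma~\ref{lem:KM-mat-o2} gives positivity of $(K_I^n)^{-1}$ and $P^n$, together with $\norm{(K_I^n)^{-1}}_\infty,\norm{P^n}_\infty\le 1+C\tau$; this is where $\tfrac12h^2\le\tau$ is used.

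Following the induction in the proof of Theorem~\ref{thm:mp-o2}, we propagate the quantity $R^{n+1}M_w^n(R^n)^TR^n\bm w^n$ through products $P^{n}P^{n-1}\cdots$ of nonnegative matrices with norms $\le 1+C\tau$. Working linearly with absolute values, the sources are then summed with weights bounded by $e^{CT}$. This gives
\[
\norm{R^n\bm w^n}_\infty\le e^{CT}\Bigl(\sum_{\ell\le n}\tau\,C(\tau+h^2)+\sum_{\ell\le n}C\,\delta_{j(\ell-1),j(\ell)-1}h^3\Bigr),
\]
where the $W^{-1}$ weights contribute at most $e^{\beta L}$. The one concern is the single factor $T^n$ at the first step of each chain, with $\norm{T^n}_\infty\le 5/3$. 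It only multiplies the incoming vector once, since later steps are handled by $P$, so it costs at most a constant factor.

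The first sum is at most $CT(\tau+h^2)$. For the second we use Lemma~\ref{lem:delta-sum}, which applies by the monotonicity of $\gamma$: the number of crossings is at most $1+h^{-1}\int_0^T|\dot\gamma|\le C/h$. Hence the crossing contributions total $Ch^2$. This is the key cancellation, replacing the naive $O(h^3/\tau)$ bound.

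Converting back via $\bm c=(1-\lambda\tau)^{-n}W\bm w$ and using $(1-\lambda\tau)^{-n}\le e^{CT}$ and $e^{\beta h i}\le e^{\beta L}$, we get $\norm{R^n\wt{\bm\varepsilon}_c^n}_\infty\le Ce^{CT+\beta L}(\tau+h^2)$. The triangle inequality with the bound on $\wh{\bm c}$ then yields \eqref{eqn:err-est-o2}. If $\alpha_0<0$, the exponential weighting with $\beta$ chosen as in \eqref{eqn:prob-w} is already built into Lemma~\ref{lem:KM-mat-o2}, so no separate case is needed.

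The main obstacle is the second step: making rigorous that the discrete solution operator is order-preserving and $\ell^\infty$-stable with constant $e^{CT}$, uniformly for arbitrary sparse sources at the interface row. The subtle point is handling the non-monotone factor $T^n$ (norm $5/3$) at crossing steps without letting it compound. We would first try to absorb each $T^n$ into the neighbouring $(K_I^{n-1})^{-1}$, which is exactly the product $P^{n-1}$ already shown to satisfy $\norm{P^{n-1}}_\infty\le 1+C\tau$, so that only one uncontrolled factor remains overall.
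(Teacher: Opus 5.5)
Your proposal is correct and follows essentially the paper's argument. You subtract $\wh{\bm c}^n$ via Lemma~\ref{lem:mod-LTE}, unroll the weighted recursion, and regroup $T^{\ell+1}(K_I^\ell)^{-1}=P^\ell$ so that Lemma~\ref{lem:KM-mat-o2} gives weights $(1+C\tau)^{n-\ell}\le e^{CT}$; the paper simply carries a harmless $5/3$ prefactor, and since each source already enters premultiplied by $(K_I^m)^{-1}$, the only unpaired $T$-factor acts on the zero initial modified error. You then use Lemma~\ref{lem:delta-sum} to sum the $O(h^3)$ crossing residuals to $O(h^2)$, exactly as the paper does.
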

\begin{proof}
Using Lemma~\ref{lem:mod-LTE},
\begin{equation}
\begin{aligned}
    R^nW^{-1}\wt{\bm\varepsilon}_c^n
    &= (K_I^n)^{-1}T^n R^{n-1}W^{-1}\wt{\bm\varepsilon}_c^{n-1}
    +  (K_I^n)^{-1} R^nW^{-1}\paren{\tau\wt{\bm r}_1^n + \wt{\bm r}_2^n}, \\
    &= \paren{\prod_{\ell=1}^n (K_I^\ell)^{-1}T^\ell }R^0 W^{-1}\wt{\bm\varepsilon}_c^0 \\
    &+  \sum_{m=1}^n\paren{\prod_{\ell=m+1}^n (K_I^\ell)^{-1}T^\ell }R^m W^{-1}\paren{\tau\wt{\bm r}_1^m + \wt{\bm r}_2^m}.
\end{aligned}
\end{equation}
Since $\wt{\bm\varepsilon}_c^0=0$, Lemmas~\ref{lem:KM-mat-o2} and~\ref{lem:delta-sum} give
\begin{equation}
\begin{aligned}
    \norm{R^nW^{-1}\wt{\bm\varepsilon}_c^n}_\infty & \le \frac{5}{3} \sum_{\ell=1}^n \paren{1+C\tau}^{n-\ell}  \paren{\tau \norm{R^\ell\wt{\bm r}_1^\ell}_\infty + \norm{R^\ell\wt{\bm r}_2^\ell}_\infty} \\
    & \le C \paren{e^{Cn\tau}-1}\paren{\tau +h^2} + Ce^{Cn\tau}
    \textcolor{blue}{\sum_{\ell=1}^n \delta_{j(\ell-1), j(\ell)-1} h^3}\\
    & \le C e^{Ct^n} \paren{\tau +h^2} .
\end{aligned}
\end{equation}
Also,
\begin{equation}
\begin{aligned}
    \norm{R^n\bm\varepsilon_c^n}_\infty
    \le \norm{R^nW W^{-1}\wt{\bm\varepsilon}_c^n}_\infty
    + \norm{R^n\wh{\bm c}^n}_\infty
    \le  C e^{Ct^n+\beta L}  \paren{\tau +h^2},\quad n = 1,2,\cdots, N_t,
\end{aligned}
\end{equation}
Taking the maximum over all $n$ gives~\eqref{eqn:err-est-o2}.
\end{proof}

\begin{remark}
\textcolor{blue}{The convergence result in Theorem~\ref{thm:err-est-o2} remains valid if the interface changes direction only
finitely many times. Indeed, one may partition $[0,T]$ into finitely many
intervals on which the interface motion is monotone and apply Lemma~
\ref{lem:delta-sum} on each interval. Summing the resulting
bounds changes only the constant in the error estimate.}
\end{remark}

\begin{remark}
The convergence analysis above applies only to the one-dimensional scheme in
this section. The two-dimensional method in Section~\ref{sec:method} uses the
same correction idea, but the proof is harder because the geometry and grid
crossings are more complicated. We therefore check the two-dimensional
convergence numerically in Section~\ref{sec:results}.
\end{remark}

\hypersetup{linkcolor=black,citecolor=black,urlcolor=black}

\section{Numerical results}\label{sec:results}

This section presents numerical results for the proposed scheme.
We first check the one-dimensional problem analyzed in
Section~\ref{sec:numerical-analysis}. We then test the two-dimensional method
on manufactured solutions and on an active flux-driven problem without a known
exact solution. All errors are measured in discrete $\ell^\infty$ norms, either
on Cartesian cell centers or on interface marker points. The computations were
performed using code written in Fortran 90 and C++ on Dell PowerEdge R7525
(AMD EPYC 7H12 processors 2.60 GHz). Unless stated otherwise, the matrix-free
GMRES iteration is stopped when the relative residual is below $10^{-6}$.

\subsection{Convergence study in one dimension}\label{sect:1Dconv}
We apply the scheme to~\eqref{eqn:prob-1d} on
$\mc B=(0,2)$. The moving boundary is
$\gamma(t)=1+0.2\sin(t)$, and the background velocity is $v=1/2$. The exact
solution is
\begin{equation}
    c_{\rm ex}(x,t)=\sin(\pi x/2)e^{-t}.
\end{equation}
It satisfies~\eqref{eqn:prob-1d} with source term
\begin{equation}
    f(x,t) = e^{-t} \left[ \left( \frac{\pi^2}{L^2} - 1 \right) \sin\left(\frac{\pi x}{L}\right) + \frac{v \pi}{L} \cos\left(\frac{\pi x}{L}\right) \right].
\end{equation}
At the moving interface $x=\gamma(t)$, the Robin data are
\begin{equation}
    g(t) = e^{-t} \left[ \frac{\pi}{L} \cos\left(\frac{\pi \gamma(t)}{L}\right) + \alpha(t) \sin\left(\frac{\pi \gamma(t)}{L}\right) \right],
\end{equation}
where $\alpha(t)=\dot{\gamma}(t)-v$.

The error is the space--time $\ell^\infty$ error up to
$T=0.5$. Table~\ref{tab:conv-1d} contains three refinements. Case 1 uses the
parabolic scaling $\tau=0.5h^2$. Case 2 fixes $\tau=10^{-5}$ and refines the
grid. Case 3 fixes $N=400$ and refines the time step. The results show
second-order convergence in space and first-order convergence in time. In
Case 1, $\tau/h^2=0.5$ at every level, which is the lower endpoint of the range
used in the proof.

\begin{table}[htbp]
\centering
\caption{Convergence for the one-dimensional manufactured
solution. Errors are measured in the space--time $\ell^\infty$ norm up to
$T=0.5$.}
\label{tab:conv-1d}
\begin{tabular}{c c c c c}
\toprule
\multicolumn{5}{l}{Case 1: Parabolic scaling ($\tau = 0.5 h^2$)} \\
\midrule
$N$ & $h$ & $\tau$ & $\ell^\infty$ error & Order \\
\midrule
20  & 0.1000 & 5.00e-03 & 2.59e-03 & ---  \\
40  & 0.0500 & 1.25e-03 & 6.93e-04 & 1.90 \\
80  & 0.0250 & 3.13e-04 & 1.80e-04 & 1.94 \\
160 & 0.0125 & 7.81e-05 & 4.61e-05 & 1.97 \\
\midrule
\multicolumn{5}{l}{Case 2: Spatial refinement (fixed $\tau = 10^{-5}$)} \\
\midrule
$N$ & $h$ & & $\ell^\infty$ error & Order \\
\midrule
20  & 0.1000 & & 2.08e-03 & ---  \\
40  & 0.0500 & & 5.45e-04 & 1.93 \\
80  & 0.0250 & & 1.42e-04 & 1.94 \\
160 & 0.0125 & & 3.72e-05 & 1.93 \\
\midrule
\multicolumn{5}{l}{Case 3: Temporal refinement (fixed $N=400$)} \\
\midrule
$\tau$ & & & $\ell^\infty$ error & Order \\
\midrule
0.01000 & & & 1.32e-03 & ---  \\
0.00500 & & & 6.66e-04 & 0.99 \\
0.00250 & & & 3.36e-04 & 0.99 \\
0.00125 & & & 1.70e-04 & 0.98 \\
\bottomrule
\end{tabular}
\end{table}


\subsection{Convergence study with manufactured solutions}

In the two-dimensional tests, $N$ denotes the number of
Cartesian cells in each coordinate direction, $N_t$ denotes the number of time
steps, and $N_\Gamma$ denotes the number of interface marker points. Errors are
measured in the discrete $\ell^\infty$ norm on the indicated cell centers or
interface points. In the mesh-refinement studies, $\tau=1/N_t$ is refined
together with the Cartesian mesh and the interface discretization. We first check the convergence of the method on problems with manufactured solutions.

\subsubsection{Circular interface}
We first consider an interior-domain test in the box $\mc B=(-1,1)^2$. The
physical domain $\Omega(t)$ is the region enclosed by a moving interface
$\Gamma(t)$. Initially, $\Gamma(0)$ is parametrized by
\begin{equation}
(x(s),y(s))=(-0.2+0.6\cos s,\,0.6\sin s),
\qquad 0\leq s<2\pi.
\end{equation}
The prescribed velocity field is
\begin{equation}
\bm u(x, y, t) = \left(\frac{1}{10} (1-y^2),\,0\right),
\end{equation}
and the interface is transported by the same velocity,
$\p_t\bm X(s,t)=\bm u(\bm X(s,t),t)$. The manufactured concentration is
\begin{equation}
c_{\rm ex}(x, y, t) = \sin(t) \cos(\pi x) \sin(\pi y).
\end{equation}
The source term and the Robin data on $\Gamma(t)$ are obtained by substituting $c_{\rm ex}$ into the continuous problem. The computation is run to $T=1$. For a grid with $N$ cells in each coordinate direction, $h=2/N$ and $\tau=1/N_t$. When $N$ is doubled, $N_t$ is increased by
a factor of four and $N_\Gamma$ by a factor of two; hence $\tau=\mc O(h^2)$ and
the interface spacing is refined at the same rate as the Cartesian mesh. Errors are measured for both cell centers in $\Omega(t)$ and interface marker points.




Figure~\ref{fig:circle-snapshots} shows snapshots of the numerical solution on
the moving interior domain. The error curves in Figure~\ref{fig:circle-error-h} show
approximately second-order decay for both the bulk concentration and the
interface trace along the parabolic refinement path $\tau=\mc O(h^2)$. The two
curves are close to one another, indicating that the boundary trace is as accurate as the bulk solution, which is essential for problems where the boundary dynamics couple with the concentration.
Figure~\ref{fig:circle-gmres} reports the GMRES iteration counts for the
interface-density solve. The counts remain essentially mesh independent, which implies the system~\eqref{eqn:dis-bie} is well-conditioned.

\begin{figure}[htbp]
    \centering
    \includegraphics[width=0.8\textwidth]{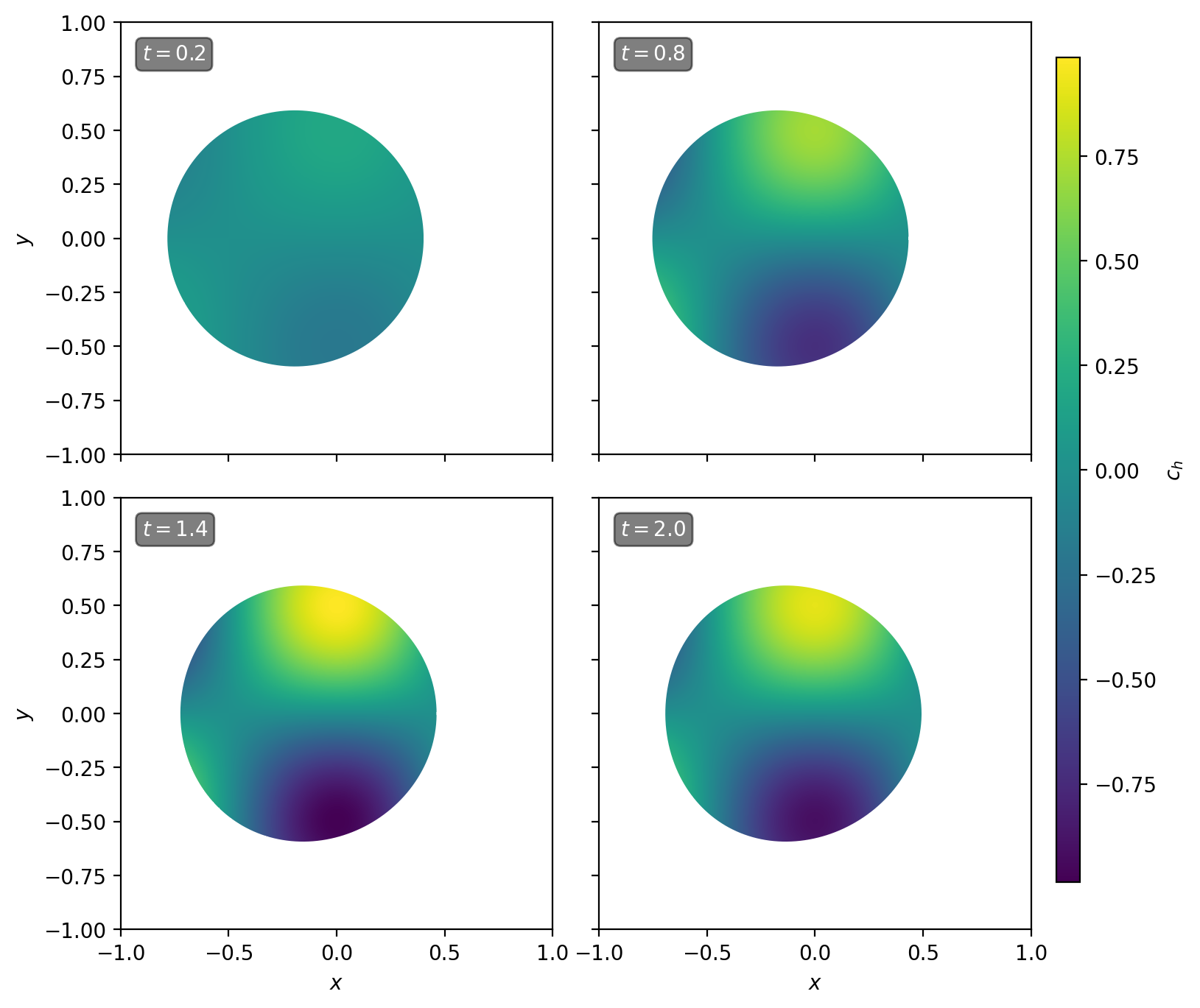}
    \caption{Snapshots of the manufactured concentration
    inside the circular moving interface.}
    \label{fig:circle-snapshots}
\end{figure}

\begin{figure}[htbp]
    \centering
    \includegraphics[width=0.7\textwidth]{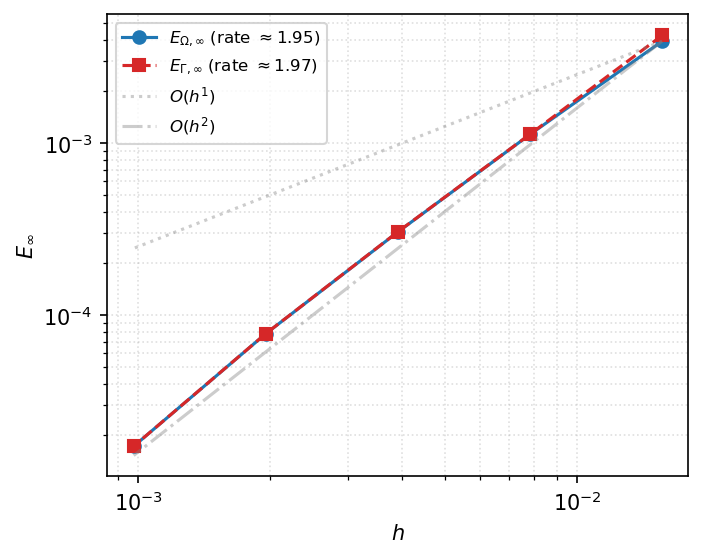}
    \caption{Space--time maximum errors for the bulk
    concentration and interface trace in the circular-interface test.}
    \label{fig:circle-error-h}
\end{figure}

\begin{figure}[htbp]
    \centering
    \includegraphics[width=0.7\textwidth]{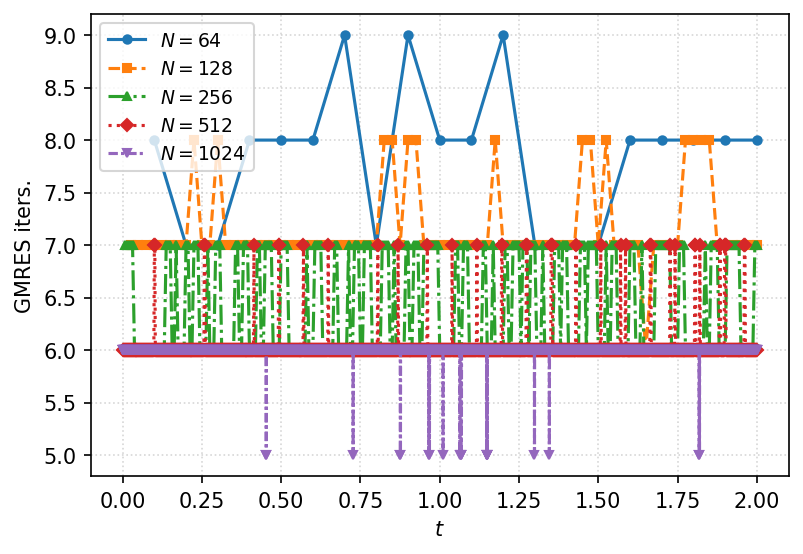}
    \caption{GMRES iteration counts for the circular-interface
    test.}
    \label{fig:circle-gmres}
\end{figure}

\subsubsection{Star-shaped interface}
The second example uses the same box, but the physical domain is the
exterior of a star-shaped moving interface inside $\mc B$. 

The velocity field and manufactured concentration are
\begin{equation}
\bm u(x, y, t) = \left(\frac{1}{10} (1-y^2),\,0\right),
\end{equation}
and
\begin{equation}
c_{\rm ex}(x, y, t) = \sin(t) \cos(\pi x) \sin(\pi y).
\end{equation}
As before, the interface is advected by the prescribed flow,
$\p_t\bm X(s,t)=\bm u(\bm X(s,t),t)$, and the source term is obtained by
substituting $c_{\rm ex}$ into the PDE.
Dirichlet data on $\partial\mc B$ and Robin data on $\Gamma(t)$ are taken from the manufactured solution.

The initial interface is defined by the parametric curve
\begin{equation}
(x(s), y(s)) =
\left(
\frac{2\pi(5.5+\cos(5s))\cos s}{64}-0.2,
\frac{2\pi(5.5+\cos(5s))\sin s}{64}
\right),
\end{equation}
where $s$ is sampled uniformly in $[0,2\pi]$. Compared with the circular
case, this geometry has larger curvature variation and produces a less regular
sequence of grid-interface intersections. 

The computation is again run to $T=1$ with $h=2/N$ and $\tau=1/N_t$. The same
refinement sequence for $N$, $N_t$, and $N_\Gamma$ is used as in the circular
test. Errors are measured at cell centers in the exterior domain and interface marker points.




Figure~\ref{fig:star-snapshots} shows snapshots of the numerical solution on
the exterior moving domain. The convergence plot in
Figure~\ref{fig:star-error-h} shows the same qualitative behavior as in the
circular test: both the bulk error and the interface-trace error decrease at a rate close to second-order once the interface is sufficiently resolved. The boundary error remains comparable to the bulk error. Figure~\ref{fig:star-gmres} shows that the GMRES iteration counts remain bounded under refinement, even for this
less regular moving interface.

\begin{figure}[htbp]
    \centering
    \includegraphics[width=0.84\textwidth]{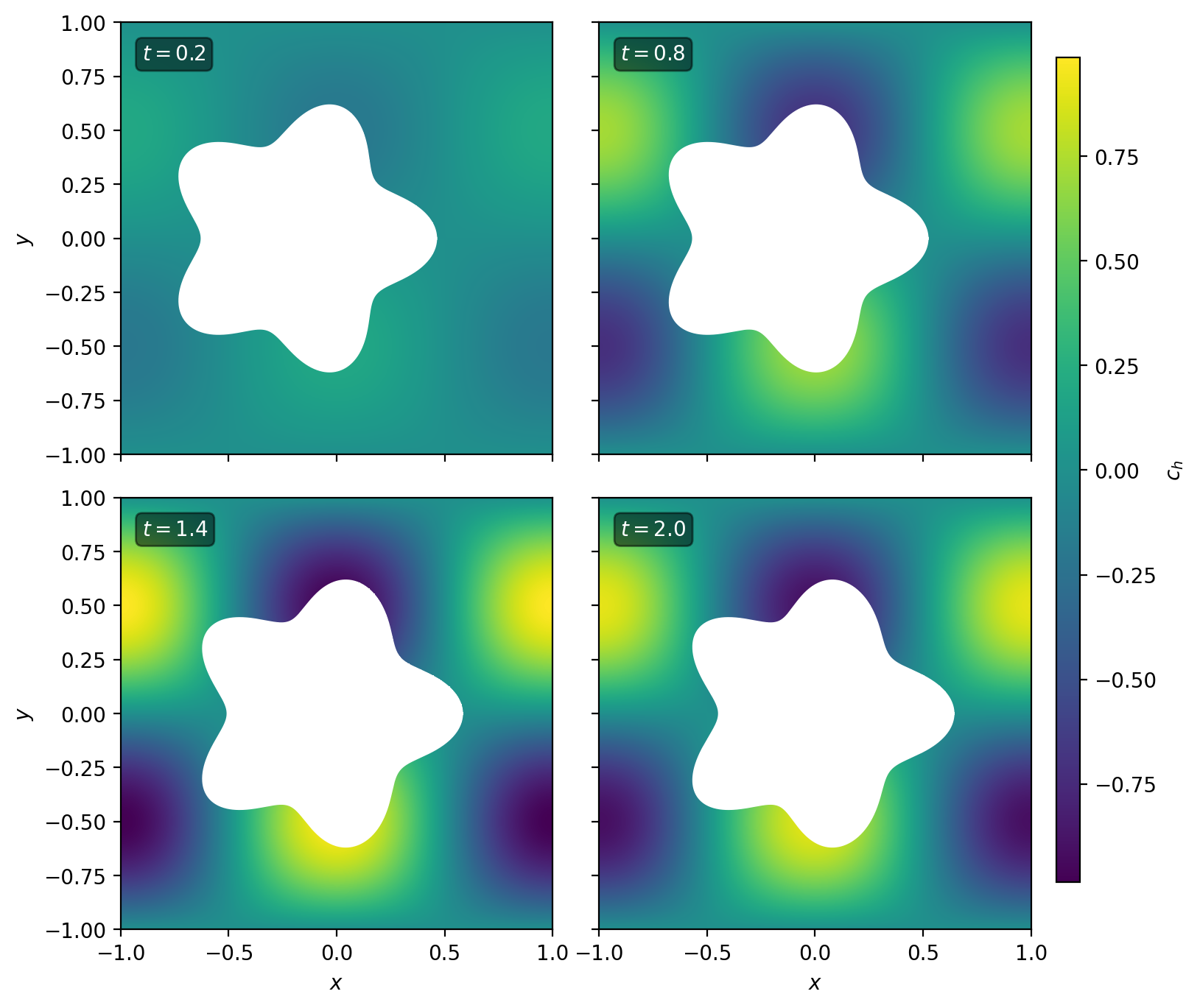}
    \caption{Snapshots of the manufactured concentration
    outside the star-shaped moving interface.}
    \label{fig:star-snapshots}
\end{figure}

\begin{figure}[htbp]
    \centering
    \includegraphics[width=0.7\textwidth]{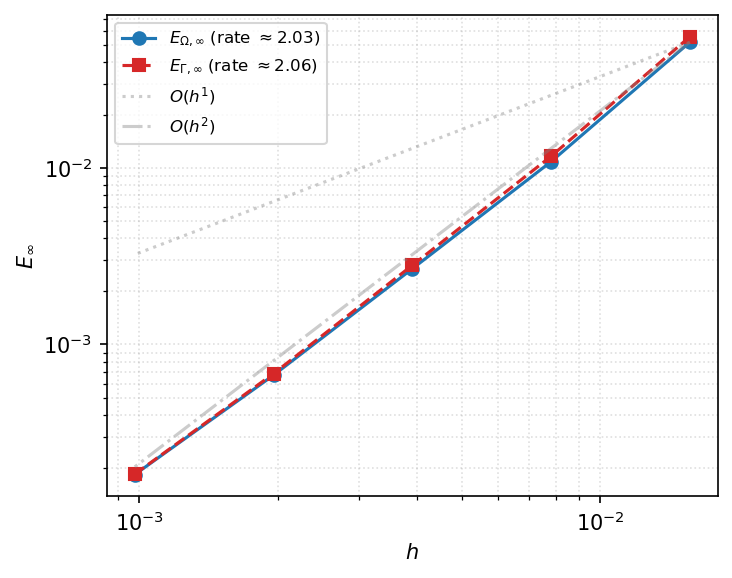}
    \caption{Space--time maximum errors for the bulk
    concentration and interface trace in the star-shaped exterior test.}
    \label{fig:star-error-h}
\end{figure}

\begin{figure}[htbp]
    \centering
    \includegraphics[width=0.7\textwidth]{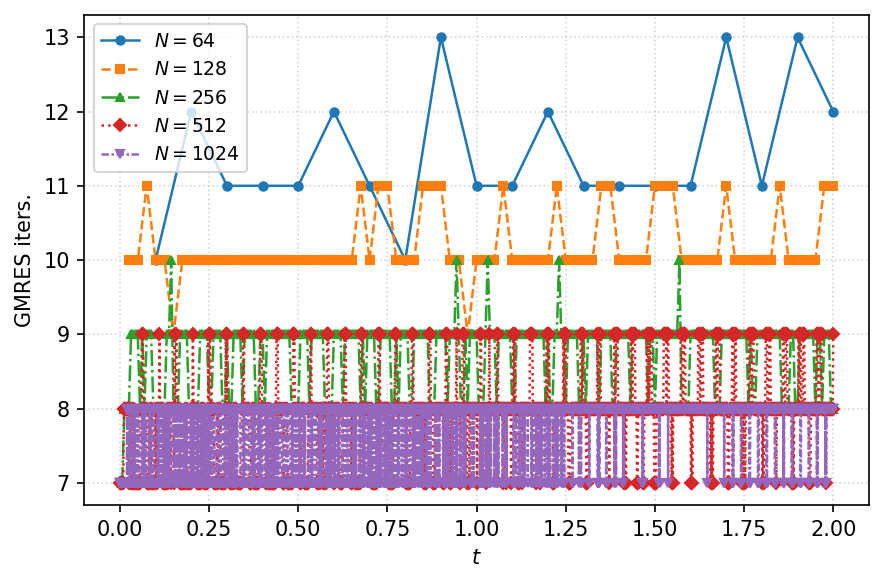}
    \caption{GMRES iteration counts for the star-shaped
    exterior test.}
    \label{fig:star-gmres}
\end{figure}

\subsection{Coupled interface dynamics}

\subsubsection{Problem setup}
We next consider a coupled transport problem motivated by the osmotic-flow
model of Yao and Mori~\cite{Yao2017}. The initial interface is the circle
centered at the origin with radius $0.6$, parametrized by
$\bm X(s,0)=0.6(\cos s,\sin s)$, $0\leq s<2\pi$.
The initial concentration is $c_0=1$ on the box $\mc B=(-1,1)^2$.
No-flux boundary conditions are imposed on $\partial\mc B$. The concentration
is transported by the divergence-free velocity field
\begin{equation}
\bm u(x, y, t)
=0.4\left(
-y+\cos(\pi t)\cos(y)\sin(x),\,
x-\cos(\pi t)\sin(y)\cos(x)
\right).
\end{equation}
The interface is advected by the same velocity field.

The interfacial flux consists of passive and active contributions. Let
$\Gamma_{\rm i}$ and $\Gamma_{\rm e}$ denote the inner and outer sides of the
interface, separating the regions $\Omega_{\rm i}$ and $\Omega_{\rm e}$. On
either side of the interface, the boundary condition is
\begin{equation}
\paren{\paren{\bm u-\p_t\bm X}c-D\grad c}\cdot \bm n
=j_{\rm c}+j_{\rm p}
\quad\text{on } \Gamma_{\rm i}\text{ or }\Gamma_{\rm e}.
\label{ckbc}
\end{equation}
The passive flux is
\begin{equation}\label{jcexp}
j_{\rm c}=k_{\rm c}(c_{\rm i}-c_{\rm e}),
\end{equation}
and the active flux is
\begin{equation}\label{jpexp}
\begin{split}
j_{\rm p}&=\widehat{k}_{\rm p}H(s, c_{\rm i}, c_{\rm e}), \qquad
\widehat{k}_{\rm p}=k_{\rm p}(s),\\
H(s,c_{\rm i}, c_{\rm e})&=
\begin{cases}
c_{\rm i}, &\text{if } k_{\rm p}(s)\geq 0,\\
c_{\rm e}, &\text{if } k_{\rm p}(s)<0.
\end{cases}
\end{split}
\end{equation}
In the computations, $D=1$ and $k_{\rm c}=0.01$. The active
pumping rate is prescribed as a function of the Lagrangian coordinate $s$
along the interface by
\begin{equation}
k_{\rm p}(s)=
\tanh\left(\frac{s^4}{10}\right)
+\tanh\left(\frac{(2\pi-s)^4}{10}\right)
-\frac{3}{2}.
\end{equation}
No exact solution is available for this problem. The example tests the method
when interface motion, passive exchange, and spatially varying active pumping
are all present.

\subsubsection{Error and convergence results}
Since there is no closed-form solution, errors are estimated by successive
refinement. In Tables~\ref{tab:coupled-bulk-t1}--\ref{tab:coupled-interface-t3},
the listed values of $N$, $N_t$, and $N_\Gamma$ correspond to the coarser grid
in each refinement pair. The reported error is the difference between the
coarse-grid solution and the next finer solution, restricted to the coarse
degrees of freedom. Thus the tables report self-convergence errors.

The self-convergence rates are close to second-order in most cases. At
$t=1$, Tables~\ref{tab:coupled-bulk-t1} and
\ref{tab:coupled-interface-t1} give orders between $1.80$ and $2.21$. At
$t=3$, Tables~\ref{tab:coupled-bulk-t3} and
\ref{tab:coupled-interface-t3} show the same pattern, except for the finest
interior bulk comparison, where the order is $1.43$. The exterior bulk error
and both interface errors remain close to second-order.
Figure~\ref{fig:coupled-refinement-diff} shows the
successive-refinement differences over time. The differences decrease with
refinement over the full interval shown. Figures~\ref{fig:coupled-snapshots}
and~\ref{fig:coupled-interface-profiles} show representative bulk snapshots
and interface concentration profiles.

\begin{table}[htbp]
\centering
\caption{Self-convergence of the bulk
concentration at $t=1$.}
\label{tab:coupled-bulk-t1}
\begin{tabular}{@{}c c c c c c c@{}}
\toprule
 & & & \multicolumn{2}{c}{$c_{\rm i}$} & \multicolumn{2}{c}{$c_{\rm e}$} \\
\cmidrule(lr){4-5}\cmidrule(l){6-7}
$N$ & $N_t$ & $N_\Gamma$ & $\ell^\infty$ error & Order & $\ell^\infty$ error & Order \\
\midrule
64  & 100  & 160 & 8.25e-04 & ---  & 1.60e-03 & ---  \\
128 & 400  & 320 & 2.10e-04 & 1.98 & 3.97e-04 & 2.01 \\
256 & 1600 & 640 & 4.65e-05 & 2.17 & 1.05e-04 & 1.91 \\
\bottomrule
\end{tabular}
\end{table}

\begin{table}[htbp]
\centering
\caption{Self-convergence of the interface
concentration at $t=1$.}
\label{tab:coupled-interface-t1}
\begin{tabular}{@{}c c c c c c c@{}}
\toprule
 & & & \multicolumn{2}{c}{$c_{\rm i}$} & \multicolumn{2}{c}{$c_{\rm e}$} \\
\cmidrule(lr){4-5}\cmidrule(l){6-7}
$N$ & $N_t$ & $N_\Gamma$ & $\ell^\infty$ error & Order & $\ell^\infty$ error & Order \\
\midrule
64  & 100  & 160 & 1.36e-03 & ---  & 9.47e-04 & ---  \\
128 & 400  & 320 & 3.36e-04 & 2.02 & 2.10e-04 & 2.17 \\
256 & 1600 & 640 & 7.26e-05 & 2.21 & 6.04e-05 & 1.80 \\
\bottomrule
\end{tabular}
\end{table}

\begin{table}[htbp]
\centering
\caption{Self-convergence of the bulk
concentration at $t=3$.}
\label{tab:coupled-bulk-t3}
\begin{tabular}{@{}c c c c c c c@{}}
\toprule
 & & & \multicolumn{2}{c}{$c_{\rm i}$} & \multicolumn{2}{c}{$c_{\rm e}$} \\
\cmidrule(lr){4-5}\cmidrule(l){6-7}
$N$ & $N_t$ & $N_\Gamma$ & $\ell^\infty$ error & Order & $\ell^\infty$ error & Order \\
\midrule
64  & 100  & 160 & 3.52e-04 & ---  & 2.56e-03 & ---  \\
128 & 400  & 320 & 6.11e-05 & 2.53 & 6.57e-04 & 1.96 \\
256 & 1600 & 640 & 2.26e-05 & 1.43 & 1.58e-04 & 2.05 \\
\bottomrule
\end{tabular}
\end{table}

\begin{table}[htbp]
\centering
\caption{Self-convergence of the interface
concentration at $t=3$.}
\label{tab:coupled-interface-t3}
\begin{tabular}{@{}c c c c c c c@{}}
\toprule
 & & & \multicolumn{2}{c}{$c_{\rm i}$} & \multicolumn{2}{c}{$c_{\rm e}$} \\
\cmidrule(lr){4-5}\cmidrule(l){6-7}
$N$ & $N_t$ & $N_\Gamma$ & $\ell^\infty$ error & Order & $\ell^\infty$ error & Order \\
\midrule
64  & 100  & 160 & 4.53e-04 & ---  & 2.26e-03 & ---  \\
128 & 400  & 320 & 9.76e-05 & 2.21 & 5.72e-04 & 1.98 \\
256 & 1600 & 640 & 1.90e-05 & 2.36 & 1.41e-04 & 2.02 \\
\bottomrule
\end{tabular}
\end{table}

\begin{figure}[htbp]
    \centering
    \includegraphics[width=0.80\textwidth]{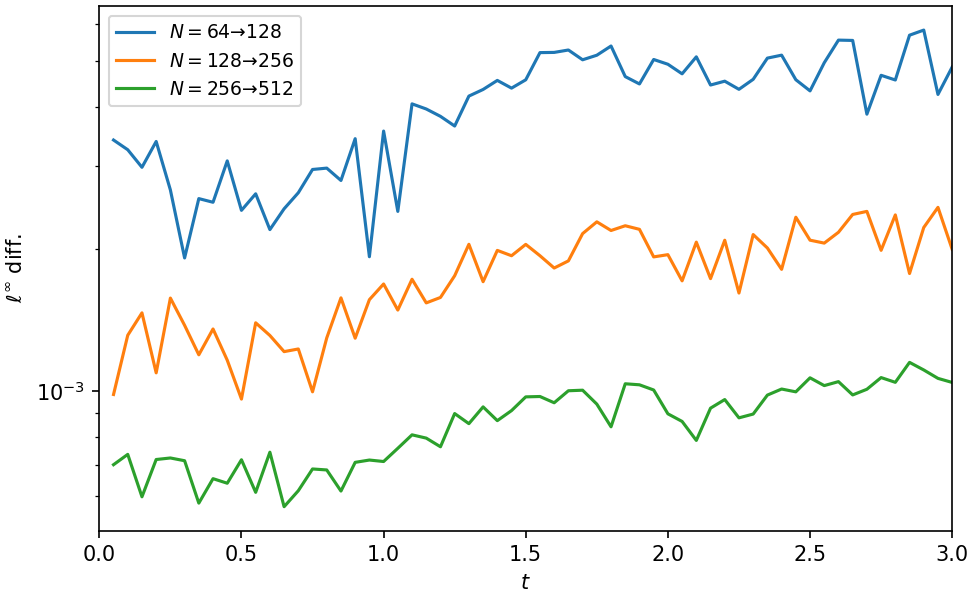}
    \caption{Differences between successive grid levels for
    the coupled chemical dynamics.}
    \label{fig:coupled-refinement-diff}
\end{figure}

\begin{figure}[htbp]
    \centering
    \includegraphics[width=0.90\textwidth]{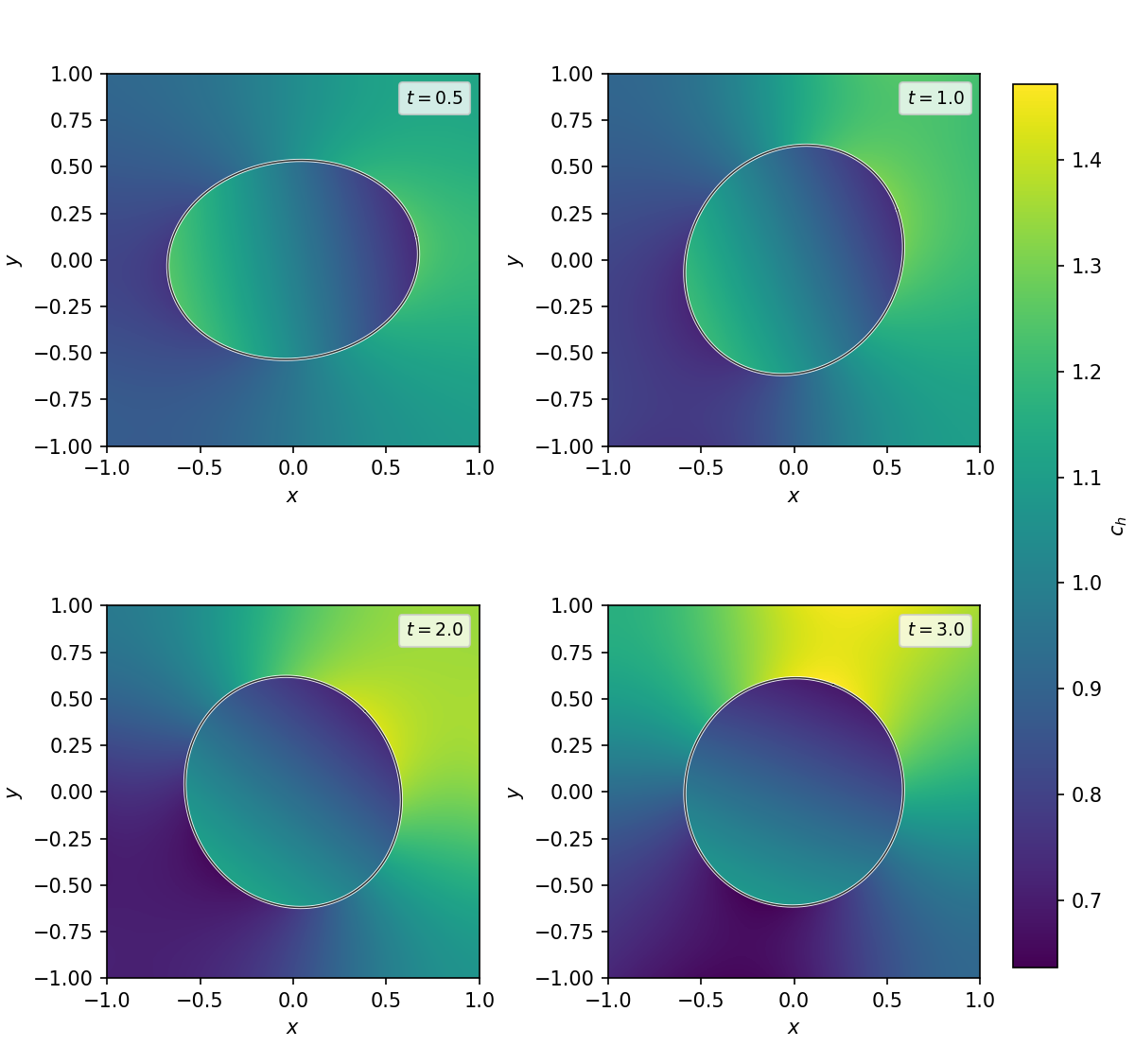}
    \caption{Snapshots of the interior and exterior
    concentrations in the coupled problem.}
    \label{fig:coupled-snapshots}
\end{figure}

\begin{figure}[htbp]
    \centering
    \includegraphics[width=0.84\textwidth]{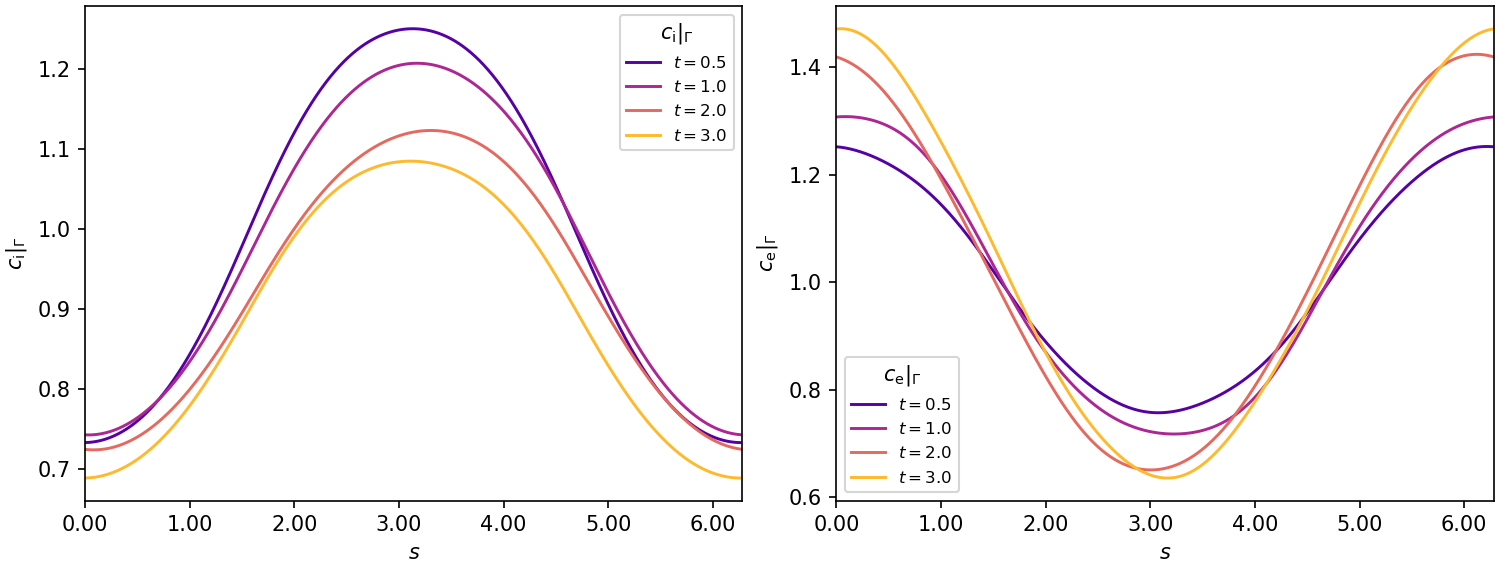}
    \caption{Interface concentration profiles in the coupled
    problem.}
    \label{fig:coupled-interface-profiles}
\end{figure}

\subsubsection{GMRES iteration counts}
Figure~\ref{fig:coupled-gmres} reports the average number of
GMRES iterations per time step for the interior and exterior interface-density
solves. The counts are nearly independent of the grid size. This behavior again confirms the linear-complexity discussion in Section~\ref{sec:method}.

\begin{figure}[htbp]
    \centering
    \includegraphics[width=0.84\textwidth]{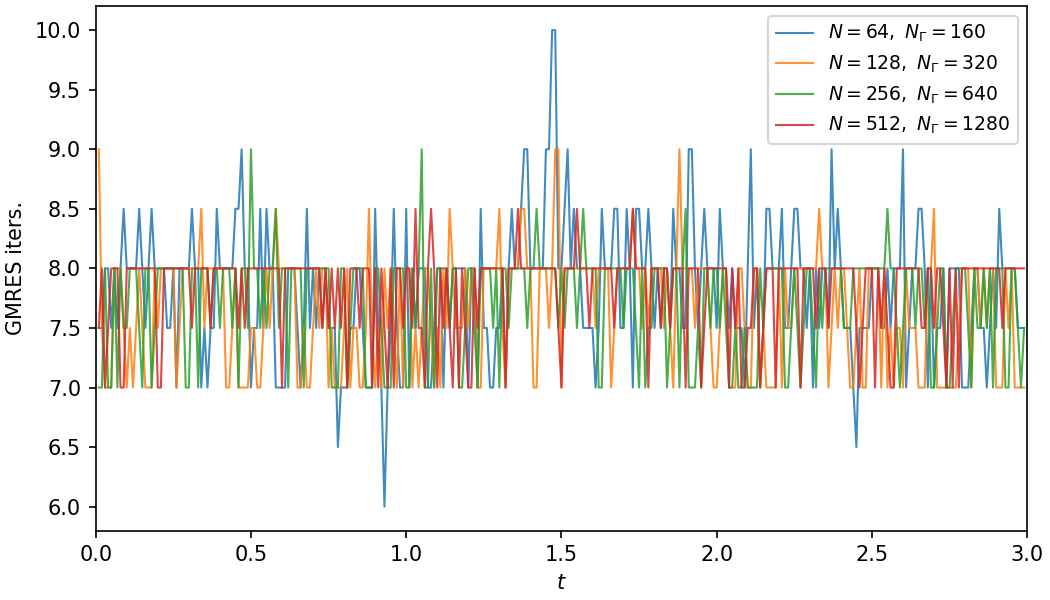}
    \caption{Average GMRES iterations per time step for the
    interior and exterior solves in the coupled problem.}
    \label{fig:coupled-gmres}
\end{figure}

\subsubsection{Conservation study}
Due to no-flux boundary conditions and the flux balance on the interface, the total mass in the system should be conserved. The numerical method does not enforce this property exactly, but we expect the mass conservation error to decrease under refinement.
We monitor conservation using the composite midpoint quadrature rule, evaluated with the same cell-centered values used in the discretization.
Let
\begin{equation}
\begin{aligned}
\mc I_{\rm i}^n
&=\left\{(p,q):\bm x_{p+\frac12,q+\frac12}\in\Omega_{\rm i}(t^n)\right\},\\
\mc I_{\rm e}^n
&=\left\{(p,q):\bm x_{p+\frac12,q+\frac12}\in\Omega_{\rm e}(t^n)\right\}.
\end{aligned}
\end{equation}
The instantaneous mass error is
\begin{equation}
    C_m^n
    = h^2 \sum_{(p,q)\in\mc I_{\rm i}^n}
    \left(c_{{\rm i},p+\frac12,q+\frac12}^n-c_0\right)
    + h^2 \sum_{(p,q)\in\mc I_{\rm e}^n}
    \left(c_{{\rm e},p+\frac12,q+\frac12}^n-c_0\right).
\end{equation}
We also use the discrete-in-time $L^2$ accumulated mass error
\begin{equation}
    E_{m,2}^n = \left(\tau\sum_{\ell=1}^n \abs{C_m^\ell}^2\right)^{1/2}.
\end{equation}
Figure~\ref{fig:conservation} shows both quantities. The instantaneous mass error remains small, and the accumulated error decreases under refinement.

\begin{figure}[htbp]
    \centering
    \includegraphics[width=0.9\textwidth]{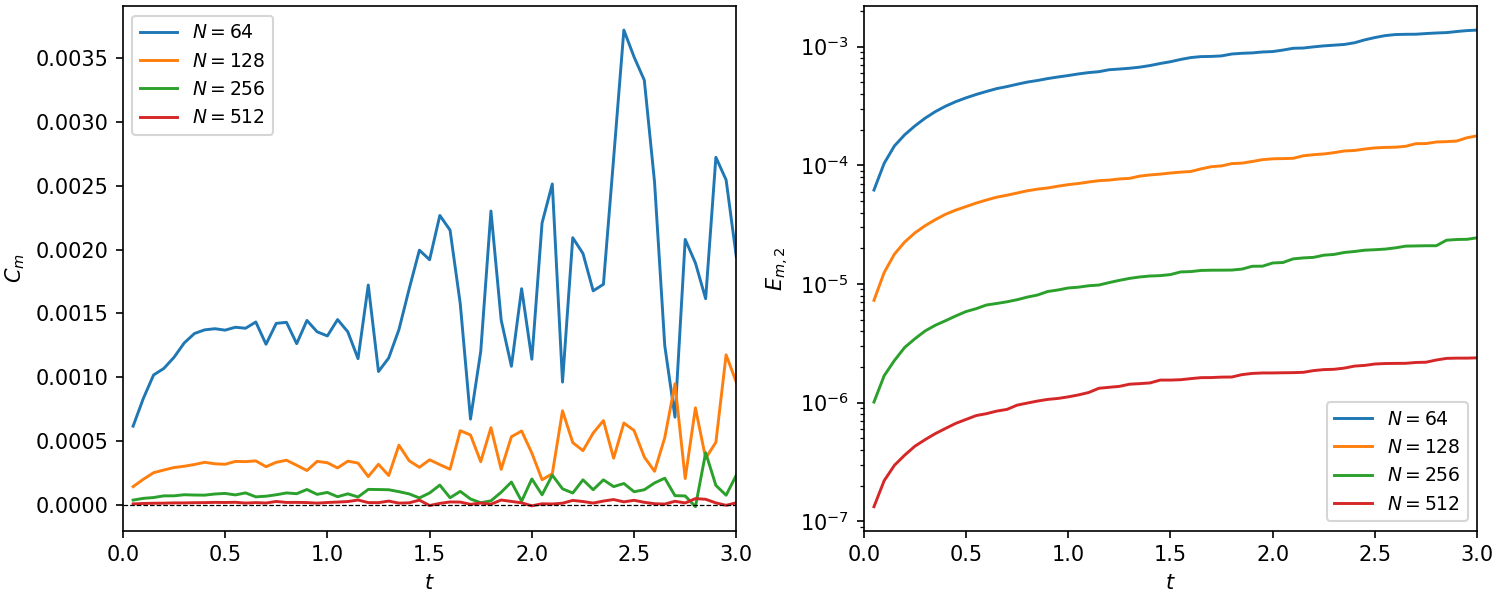}
    \caption{Conservation diagnostics for the coupled
    problem. Left: instantaneous mass error $C_m^n$. Right: accumulated mass
    error $E_{m,2}^n$.}
    \label{fig:conservation}
\end{figure}

\section{Conclusion}\label{sec:conclusion}

This paper developed a correction-based Cartesian grid method for 
advection--diffusion equations with Robin boundary conditions on moving
domains. The moving-domain problem is recast as a moving interface problem on a fixed
box, with an interface density used to impose the Robin condition. A local polynomial correction function is used to provide the jump information and correction terms at irregular grid points near the moving boundary. Because these corrections enter the scheme
only through the right-hand side, the bulk operator remains the standard
cell-centered Cartesian finite-difference operator and can be solved efficiently.
The formulation keeps the computational geometry modest. It does not require cut-cell volumes, quadrature over partial cells, explicit grid--interface intersection reconstruction, or small-cell stabilization. After the local correction coefficients and the bulk unknowns are eliminated, the remaining interface-density system is solved by matrix-free GMRES, with each
matrix-vector product requiring one multigrid solve for the regular Cartesian bulk operator.

The analysis proves $\mc O(\tau+h^2)$ error estimate for the fully discrete one-dimensional scheme under the stated assumptions. The numerical experiments are consistent with
this estimate: the one-dimensional manufactured solution shows first-order
convergence in time and second-order convergence in space. The two-dimensional
manufactured examples show comparable accuracy for the bulk concentration and
the interface trace, and the GMRES iteration counts remain nearly independent
of the mesh size. In the active-flux example, successive refinements show
self-convergence for coupled interior and exterior concentrations, while the
conservation diagnostics show small instantaneous mass defects and decreasing
accumulated mass error.

The principal remaining analytical issue is the extension of the convergence
theory to multiple space dimensions, which is significantly more difficult due to the complex geometry of the moving interface and grid-interface intersections. Another natural direction is to treat coupled motion laws, where the interface velocity is determined by the computed concentration rather than prescribed or updated separately.

\section*{Acknowledgments}
The authors gratefully acknowledge support from the National Institute for Theory and Mathematics in Biology (NITMB).
YM was partially supported by NSF DMR-2309034 (MRSEC) and the Simons Foundation Math+X Chair Fund MPS-MATHX-00234606.

\section*{Conflicts of Interest}
The authors declare no conflicts of interest.

\section*{Data Availability Statement}
Data available on request from the authors.

\bibliographystyle{amsplain}
\bibliography{references}
\end{document}